\documentclass[11pt,reqno]{amsart}

\usepackage{amssymb}
\usepackage{amsthm}
\usepackage{amsmath}
\usepackage{graphicx}
\usepackage{amsaddr}
\usepackage{comment}
\usepackage[usenames,dvipsnames]{xcolor}
\usepackage[margin = 1in]{geometry}
\usepackage{enumerate}
\usepackage[colorlinks=true, allcolors=blue]{hyperref}
\usepackage[toc,page]{appendix}
\usepackage{lineno}
\usepackage{color}
\usepackage{bbm}
\usepackage{hyperref}
\usepackage{mhchem}
\usepackage{siunitx}

\newcommand{\blue}{\color{blue}}
\definecolor{mygreen}{rgb}{0.1,0.75,0.2}

\providecommand{\bbs}[1]{\left(#1\right)}

 \newtheorem{thm}{Theorem}[section]
 \newtheorem{cor}[thm]{Corollary}
 \newtheorem{lem}[thm]{Lemma}
 \newtheorem{prop}[thm]{Proposition}

 \newtheorem{rem}[thm]{Remark}

 \numberwithin{equation}{section}

\providecommand{\bbs}[1]{\left(#1\right)}
\newcommand{\lra}{\longrightarrow}

\newcommand{\wra}{\rightharpoonup}

\newcommand{\la}{\langle}
\newcommand{\ra}{\rangle}
\newcommand{\pt}{\partial}
\newcommand{\eps}{\varepsilon}
\newcommand{\ud}{\,\mathrm{d}}
\newcommand{\8}{\infty}

\newcommand{\bR}{\mathbb{R}}

\newcommand{\sP}{\mathcal{P}}

\newcommand{\rhoe}{\rho_t^\eps}
\newcommand{\pie}{\pi_\eps}
\newcommand{\ve}{{V_\eps}}
\newcommand{\pio}{\pi_0}
\newcommand{\be}{B_\eps^{-1}}

\newcommand{\lbar}{\overline}

\begin{document}

\title
[Homogenization of Wasserstein gradient flow for McKean-Vlasov Equation]
{Homogenization and Long Time Behavior of Wasserstein gradient flow for McKean-Vlasov Equation}

\author[Y. Gao]{Yuan Gao}
\address{Department of Mathematics, Purdue University, West Lafayette, 47906}
\email{gao662@purdue.edu}

\author[N. K. Yip]{Nung Kwan Yip}
\address{Department of Mathematics, Purdue University, West Lafayette, 47906}
\email{yipn@purdue.edu (corresponding author)}

\begin{abstract}
We consider a McKean-Vlasov equation incorporating spatial fast oscillatory behavior into both
the energetic and kinematic components of the model. The former is given in terms of a confining potential while
the latter by an underlying metric. The equation is formulated as a gradient flow in the space of probability measures endowed with a Wasserstein metric. We identify the limiting dynamics
which is also described as a gradient flow in an effective media. We further prove the long time exponential convergence of solutions to the invariant measure. The rate of convergence is expressed in terms of a Logarithmic Sobolev constant which is uniform in terms of the oscillatory length scale. Our approach is based on variational
and functional inequalities. The homogenization result of the current paper extends the authors' previous work \cite{GaoYip} on a linear Fokker-Planck equation to a nonlinear and nonlocal setting.
\end{abstract}

\date{\today}

\maketitle

\section{Introduction}

Systems of interacting particles subject to independent random forces arise
throughout the physical, biological and social sciences, modeling phenomena ranging from
plasma dynamics \cite{vlasov1961many}, 
granular media \cite{Villani2006Granular},
swarming and flocking \cite{cucker2007emergent,Carrillo2010Swarming,carrillo2014derivation}, 
to opinion formation \cite{deffuant2000mixing,hegselmann2002opinion}.
When the number of particles is large and the interaction is of \emph{mean-field type} ---
each particle interacts with all the other particles in some appropriate average
sense,
the empirical measure of the system is well approximated by the solution of a
McKean--Vlasov equation \cite{McKean1966, McKean1967} which is a nonlinear and nonlocal Fokker--Planck equation of
aggregation--diffusion type \eqref{mainFP}.
There has been a large number of work on this equation, 
starting from the derivation of continuum equations from particle systems to 
analysis of well-posedness and long time behavior. We refer to
\cite{golse2016dynamics,jabin2017mean} for some overviews of this
topic. 

In this work, we will make use of variational and gradient flow structure of the equation. This approach has shown to be very fruitful as it can tackle a wide range of nonlinear equations and provide powerful tools in the study of well-posedness, stability, and convergence to equilibrium of the solutions. To mathematically implement such an
approach, we will mention two key ideas in connection to our current paper. 
The first is the work \cite{otto2001geometry} by Otto which
introduces a Riemannian metric structure for the space of probability measures $\sP$
which is then used to
analyze long time behavior of porous medium equation. Another work \cite{jordan1998variational}, by Jordan, Kinderlehrer and Otto re-writes a Fokker-Planck equation as a gradient flow with respect to
some Wasserstein distance $W$ for $\sP$. Then solutions are constructed by means of a time discretization variational scheme. Using such a framework, general aggregation-diffusion type equations 
can be viewed as gradient flows of a free energy (see \eqref{E_mc_eps}) --- consisting of entropy, a potential energy, and an interaction energy --- with respect to $W$
\cite{carrillo2019aggregation}.
The second idea is initiated by the work \cite{sandier2004gamma} of Sandier and Serfaty which gives
an equivalent formulation of gradient flows in terms of a variational inequality. 
This approach enables the use of techniques from calculus of variations to analyze the convergence and asymptotics of many gradient flow dynamics. In some sense, this extends
the idea of $\Gamma$-convergence for stationary problems to their evolutionary counterparts --- see \cite{mielke2016evolutionary,AGS} for more recent overviews of this approach.
Both of these two ideas will be explained in more detail later in this introductory section and 
also Section \ref{sec2.2}.
We refer to \cite{CarrilloMcCannVillani2003, CarrilloMcCannVillani2006, carrillo2019aggregation} for further expositions and applications in nonlinear partial differential equations, in particular the first in the list which describes
a connection between long time behavior of the solution and the underlying geometry
of Wasserstein space.

In many applications, the environment in which the particles evolve is
far from homogeneous. This can happen in 
transportation networks \cite{bernot2008optimal},
population dynamics \cite{tilman1997spatial},
protein folding \cite{onuchic1997theory} and many others. 
See also \cite{pavliotis2008multiscale,Du2020Multiscale} and references herein. 
The underlying energy landscape may be rough, fluctuating rapidly on a small spatial scale $\eps\ll1$, and the medium itself may also 
inhibit or facilitate transport in a spatially varying and anisotropic manner. 
Both of these features are naturally captured by incorporating confining potential 
$U_\eps$ and the mobility matrix $\be$ which oscillate on scale $\eps$. 
Loosely speaking, the energetics of the model (described by $U_\eps$) interacts
in some interesting and
non-trivial manner with the kinematics (described by $\be$).

Two questions then arise. The first is a question of \emph{homogenization}: as $\eps \to 0$,
does the dynamics converge, and if so, is the limit again a gradient flow of an
effective energy with respect to an effective transport metric? The second
concerns the \emph{long-time behavior of the oscillatory system itself}: for
each fixed $\eps$, does the solution converge to its equilibrium, and at what
rate? Since in practice $\eps$ is small but positive, it is essential that the
rate of convergence to equilibrium be independent of $\eps$; otherwise the time
needed to reach equilibrium could deteriorate as the medium becomes increasingly
oscillatory. In this paper we answer both questions affirmatively
for a model that incorporates genuine heterogeneities in both the potential function and  the underlying spatial medium.   Concisely, we can establish
(i) the convergence of the $\eps$-dependent gradient flow to an effective
Wasserstein gradient flow, and (ii) the exponential convergence of the
$\eps$-system to its equilibrium with a rate that is uniform in
$\eps$. This is beyond classical homogenization theory as our equation is
both nonlocal and nonlinear. The setting of Wasserstein metric is used as it is particularly tailored to handle the space of probability measures and has found wide range of applications in many models.

Now we describe our model more precisely.
We study the homogenization and long-time behavior of the
following McKean--Vlasov equation of aggregation--diffusion type
\begin{equation}\label{mainFP}
\pt_t \rho^\eps_t  = \nabla \cdot \bbs{\rhoe  \be \nabla (\log \rhoe + U_\eps+W*\rhoe)}.
\end{equation}
Here the interaction kernel is assumed to be symmetric, $W(x,y)=W(y,x)$. 
The potential $U_\eps$ is a fast-oscillating function modeling background
fluctuations, for instance a rough energy landscape. The oscillatory
coefficient $B_\eps$ is motivated by optimal transport in a spatially
inhomogeneous medium. To describe this, consider a cost function $c_\eps(x,y)$
defined through a {\em least action principle},
\begin{equation}
c_\eps(x,y) = \min \left\{ \int_0^1 L_\eps(\dot{z}_t, z_t) \ud t, 
\quad z: [0,1]\longrightarrow\bR^n,\,\,z_0=x, \,\, z_1=y \right\},
\end{equation}
where the Lagrangian $L_\eps$ is a bilinear form in the velocity variable $v$,
given by a positive definite matrix $B_\eps(x)$
\begin{equation}\label{L.bilin}
L_\eps(v,z) = \la B_\eps(z)v, \, v\ra.
\end{equation}
The cost $c_\eps$ then induces a Riemannian distance on $\bR^n$, 
\begin{equation}\label{eps-c}
c_\eps^2(x,y) = \min\left\{ \int_0^1 \la B_\eps(z_t) \dot{z}_t, \dot{z}_t \ra \ud t, \quad z: [0,1]\longrightarrow\bR^n,\,\, z_0 = x, \,\, z_1=y \right\}.
\end{equation}
Note that $L_\eps$ is oscillatory (periodic) in the state variable $z$,
corresponding to transport in an inhomogeneous medium with a periodic structure.
With the above $c_\eps$, we introduce the following $\eps$-Wasserstein distance (squared)
between $\mu,\nu\in \sP(\bR^n)$:
\begin{equation}\label{Wc}
W_\eps^2(\mu,\nu):= \inf\left\{
\iint  c_\eps^2(x,y) \ud \gamma(x,y); \quad \int \gamma(x,\ud y)  = \mu(x), \,\, \int \gamma(\ud x,y) = \nu(y)
\right\}.
\end{equation}
In this current work, we will express \eqref{mainFP} as a gradient flow with respect to $W_\eps$.
Note that when $B_\eps=I$, then $c_\eps(x,y)=|x-y|$ and $W_\eps$ reduces to the classical Wasserstein
distance used in the optimal transport of probability measures. In this case, the corresponding equations
\eqref{eps-c} and \eqref{Wc} were 
first formulated by Benamou and Brenier in \cite{benamou2000computational}. This is essentially
the dynamical point of view of
optimal transport. We find it extremely useful and ``convenient'' if we want to incorporate
spatial inhomogeneity into the model --- all the information is captured
and described by $\be$ or quantities alike.
 
Equation \eqref{mainFP} admits two complementary interpretations. The first is
as a gradient flow on the Wasserstein space $(\sP(\bR^n), W_\eps)$; this
geometric structure will serve as our underlying framework for our analysis of
the $\eps$-Fokker--Planck equation. The second is as the mean-field limit of an
interacting particle process. We describe both in turn.

Consider the free energy $E_\eps$ on the probability space $\sP(\bR^n)$
associated with the McKean--Vlasov equation:
 \begin{equation}\label{E_mc_eps}
 \begin{aligned}
 E_\eps(\rho):=& \int \big[\rho(x)\log  \rho(x) +  U_\eps(x)\rho(x) \big]\ud x+
 \iint \frac12 W(x,y) \rho(y)\rho(x) \ud y\ud x\\
 = & \int \rho(x) \log \frac{\rho(x)}{\pi_\eps(x)} \ud x + \iint \frac12W(x,y)\rho(y)\rho(x) \ud x \ud y - \log \Lambda_\eps,
  \end{aligned}
 \end{equation}
 where  
\begin{equation}\label{pie0}
\pi_\eps(x) := \frac{e^{-U_\eps(x)}}{\Lambda_\eps}, \quad \Lambda_\eps:=\int e^{-U_\eps(x)} \ud x,
\end{equation} 
denotes the invariant measure of the ``single-particle'' dynamics. We remark that
the additive constant
$-\log \Lambda_\eps$ plays no role in our result concerning homogenization, while it will be utilized in the analysis of long time behavior.
We will assume that
$\pi_\eps$ satisfies a Logarithmic Sobolev Inequality --- see \eqref{LSI} ---
which provides a compactness property in the whole space and is the key
ingredient in the proof of long-time convergence.

The first variation
 $\displaystyle \frac{\delta E_\eps}{\delta \rho}$ of $E_\eps$ is given by
\begin{equation}\label{dE}
\frac{\delta E_\eps}{\delta \rho}(\rho) = \log \rho + 1 + U_\eps + \int W(x,y)\rho(y) \ud y = \log \frac{\rho}{\pi_\eps} +1 + W*\rho, 
\end{equation}
where $(W*\rho)(x) = \int W(x,y)\rho(y) \ud y.$
With the same positive definite matrix $B_\eps$ as in \eqref{L.bilin}, 
we consider the following inhomogeneous Fokker--Planck equation,
\begin{equation}\label{epsFP}
\pt_t \rho^\eps_t = \nabla \cdot \bbs{ \rho^\eps_t B_\eps^{-1} \nabla \frac{\delta E_\eps}{\delta \rho}(\rho^\eps_t) } = \nabla \cdot \bbs{\rhoe  \be \nabla (\log \rhoe + U_\eps+W*\rhoe)},
\end{equation}
with initial data $\rho^\eps_0$.

Following Otto's formal Riemannian calculus on Wasserstein space 
\cite{otto2001geometry}, one can identify $\nabla^{W_\eps} E (\rho)$ as
\begin{equation}\label{nablaW}
\nabla^{W_\eps} E_\eps (\rho) := -\nabla \cdot \bbs{\rho B_\eps^{-1} \nabla\frac{\delta E_\eps}{\delta \rho}}.
\end{equation}
In this sense, the inhomogeneous Fokker--Planck equation \eqref{epsFP} can be 
identified as the gradient flow of $E_\eps$ with respect to the $\eps$-Wasserstein metric $W_\eps$, i.e.
\begin{equation}\label{epsGF}
\pt_t \rho^\eps_t = -\nabla^{W_\eps} E_\eps (\rho^\eps_t)
= \nabla \cdot \bbs{\rhoe B_\eps^{-1} (\nabla\log\frac{\rhoe}{\pi_\eps}+\nabla W*\rhoe)},
\end{equation}
where $\nabla W* \rho = \nabla (W*\rho) = \int_y \nabla_x W(x,y) \rho(y) \ud y.$
This is the main underlying framework of the paper.

Note that \eqref{epsGF} can also be interpreted as a
\emph{continuity equation} for the flow of probability mass $\rho^\eps_t$,
\begin{equation}\label{cont.eqn}
\pt_t \rho^\eps_t
+ \nabla \cdot \bbs{\rhoe w^\eps_t}=0,\,\,\,
\text{where $w^\eps_t = -B_\eps^{-1}\nabla \left(\log\frac{\rhoe}{\pi_\eps}+W*\rhoe\right)$.}
\end{equation}
For later purposes, we find it convenient to also write \eqref{epsGF} as
\begin{equation}\label{epsGFu}
\pt_t \rho^\eps_t
= -\nabla \cdot \bbs{\rhoe B_\eps^{-1} \nabla u^\eps_t},\,\,\,
\text{where $u^\eps_t := -\frac{\delta E_\eps(\rhoe)}{\delta \rho} = -(\log\frac{\rhoe}{\pi_\eps}+W*\rhoe)$,}
\end{equation}
which is more in line with the gradient flow interpretation \eqref{nablaW}. 

An immediate consequence of the gradient flow dynamics \eqref{epsGF} is the following \emph{energy dissipation equality}:
\begin{equation}\label{EDE}
\frac{d}{dt}E_\eps(\rho^\eps_t) 
= \int \frac{\delta E_\eps(\rhoe)}{\delta \rho}\partial_t \rho^\eps_t\ud x  =: -D_\eps(\rhoe),
\end{equation}
where explicitly,
\begin{equation}\label{D:Def0}
D_\eps(\rho) = \int
\left\langle\rho B_\eps^{-1}\Big(\nabla\log\frac{\rho}{\pi_\eps}+\nabla W*\rho\Big),
\Big(\nabla\log\frac{\rho}{\pi_\eps}+\nabla W*\rho\Big)
\right\rangle \ud x.
\end{equation}
The key to proving convergence to the stationary solution is the following Logarithmic Sobolev Inequality (LSI):
\begin{equation}\label{LSI0}
E_\eps(\rho) \lesssim D_\eps(\rho).
\end{equation}
Combined with \eqref{EDE}, this immediately implies that the energy   converges
to its minimum value \emph{exponentially fast}, with a rate that is uniform in $\eps$.

A second interpretation of \eqref{epsFP} is as the mean-field limit of a
McKean--Vlasov type interacting particle process with multiplicative noise.
Although in this paper we do not pursue the rigorous mean-field limit as the particle
number tends to infinity, we will describe the underlying stochastic particle system
in order to fix ideas.
Specifically, consider a drift-diffusion-interaction process for $N$ particles with McKean--Vlasov type weak interaction,
\begin{align}
\ud X^{i,N}_t = &b^{i,N}(X^N_t) \ud t + \sigma(X^{i,N}_t) * \ud \mathcal{B}^i_t,
\end{align}
with
\begin{equation}\label{drift.form.ave}
b^{i,N}(x^N) = - \be(x^{i,N}) \bbs{\nabla U_\eps(x^{i,N}) + \frac{1}{N}\sum_j \nabla_x W(x^{i,N},  x^{j,N})}, \quad \sigma(x) = \sqrt{2 \be(x)}.
\end{equation}
Here $\mathcal{B}^i_t$ is an $n$-dimensional Brownian motion
and the multiplicative noise $\sigma(X_t) * \ud \mathcal{B}_t$ is understood in the backward Ito sense,
\begin{equation}
\int_0^t \sigma(X_s)* \ud \mathcal{B}_s = \int_0^t \frac12\nabla \cdot (\sigma \sigma^T)(X_s) \ud s 
+ \int_0^t \sigma(X_s) \ud \mathcal{B}_s\,\,\,\text{for all $t>0$}.
\end{equation}
Formally, letting $N\to\infty$ in the particle system above leads to \eqref{epsFP}. The key idea is the \emph{propagation of chaos} by which individual particles 
asymptotically becomes independent of each other. This is due to fact that in the limit, by
Law of Large Number, the ensemble
average part in the drift \eqref{drift.form.ave} becomes a deterministic function
which can be described by the probability density of a single particle $\rho_t$. Hence each particle sees and interacts with the same function but at the same time is excited by independent (Brownian) noise. We refer to 
\cite{sznitman1991topics, lacker2023hierarchies} for more in depth
discussion of this concept.

Beyond McKean's classical work \cite{McKean1966, McKean1967}, we would point to the results of 
\cite{malrieu2001logarithmic,malrieu2003convergence,liu2021long,guillin2022uniform} which analyze the convergence to mean field equations and also the long time behavior of the solution. Most of these work require some form of convexity of the potential function $U_\eps$
and the interaction kernel $W_\eps$ needs to satisfy conditions to ensure the 
existence of a unique invariant 
measure. If these conditions do not hold, the McKean--Vlasov PDE \eqref{epsFP} will in general undergo phase transitions due to the presence of multiple invariance measures \cite{chayes2010mckean,tugaut2014phase}. 
The analysis will then be much more delicate
as revealed in \cite{CarrilloGvalaniPavliotisSchlichting2020, DelgadinoGvalaniPavliotis2021, DelgadinoGvalaniPavliotisSmith2023}.

In any event, the limiting and long time behavior of systems with multiple scales is an interesting and non-trivial question. This is particularly the case for interacting particle systems as we have further the interaction between the number of particles and the fluctuating or oscillatory length scales. Different limiting procedures can lead to different
descriptions, in particular, for finite and infinite times. 
Among others, see for example \cite{GomesPavliotis2018}, which
considers a closely related setting with rapidly oscillating two-scale confining potential $U_\eps$ and proves that the mean-field and homogenization limits commute for finite times but not in the long-time limit.
We also refer to \cite{DengGaoWang} for distinct homogenization limits of different minimizing movement schemes --- with an additional time discretization parameter --- even for a linear Fokker-Planck equation.

\subsection{Results and contributions}\label{sec:results}
Our main results concern two aspects of the $\eps$-Wasserstein gradient flow
\eqref{epsGF}: its evolutionary Gamma convergence as $\eps\to 0$, and its long-time
behavior for each fixed $\eps$. First, as $\eps \to 0$, the evolution
\eqref{epsGF} converges to a limiting dynamics, which is again characterized as
a gradient flow of an effective total energy $\overline{E}$ with respect to 
an effective Wasserstein distance $\overline{W}$. The distance $\overline{W}$
induced by this evolutionary convergence is still a Riemannian metric on
$\sP(\bR^n)$. Second, for each fixed $\eps$, the solution $\rhoe$ converges to
the stationary solution exponentially fast in time, and both the rate of
convergence and the constants involved are independent of $\eps$.
 
The main approach we use is to first recast \eqref{epsGF} as a  generalized gradient 
flow  in the following form of an \emph{energy dissipation inequality} (EDI)
\begin{equation}\label{EDI1st} 
E_\eps(\rhoe) + \int_0^t \left[ \psi_\eps(\rho^\eps_\tau, \pt_\tau \rho^\eps_\tau ) +  \psi^*_\eps\left(\rho_\tau^\eps, -\frac{\delta E_\eps}{\delta \rho}(\rho_\tau^\eps)\right) \right] \ud \tau  \leq  E_\eps(\rho_0^\eps).
\end{equation}
This formulation involves dissipation functionals $\psi_\eps$ and 
$\psi^*_\eps$ on the tangent and the co-tangent plane of $\sP(\bR^n)$, 
respectively. 
Inequality \eqref{EDI1st} is in fact equivalent to the strong form of gradient flow \eqref{epsGF} since the functional $\psi_\eps$ and $\psi^*_\eps$ are convex conjugate of each other.
Then the limiting description of the dynamics is obtained by passing to the limit in the functionals appearing in
\eqref{EDI1st}.
As of long time behavior, as mentioned earlier, the key is the LSI \eqref{LSI0}.
We do remark that we are working in a perturbative regime in the sense that $W$
is small so that the overall energy landscape is
convex and hence there is a unique stationary solution.

We now give some comments about our result in comparison with other existing works.
In our previous paper \cite{GaoYip}, we considered a similar homogenization 
problem, but for a single-particle case in which the Fokker-Planck equation becomes a
linear equation -- \eqref{mainFP} without the $W*\rhoe$ term. In the same vein, it is interpreted as a Wasserstein gradient flow.
We use the same EDI framework to characterize the limiting equation as a gradient flow
with respect to an effective Wasserstein distance $\overline{W}$. In particular, we found
out that in general there will be discrepancy between $\overline{W}$ and the \emph{Gromov--Hausdorﬀ limit $W_{\text{GH}}$} of the $\eps$-Wasserstein distance $W_\eps$. It would be intersting to understand such a difference in a more quantitative manner.
Another comment is that the $\eps$-equation involves both \emph{energetic} and 
\emph{kinematic effects}. The former is tied to $U_\eps$ while the latter to $\nabla W^\eps$ which is
connected to $B_\eps$. In general, these two effects will interact with each other, in particular when the underlying invariant measure $\pie$ converges only weakly. On the
other hand, if $\pie$ converges strongly, such interaction disappears. This can also be seen in the formula for the effective diffusivity coefficient -- see 
\eqref{eff.rho.eqn} and \eqref{eff.rho.eqn0} in the Appendix.

Homogenization or finding effective dynamics certainly has a long history. 
We refer to \cite{bensoussan2011asymptotic} for a classical treatise on this topic. In essence, the key
is to find the \emph{corrector} --- solution to an appropriate cell problem. Then the
effective coefficient can be expressed as some average over the cell problem. On the other hand, for nonlinear problems, finding the cell problem, its solution and prove the convergence can be highly nontrivial as both spatial and temporal scales interact with
each other. For problems with variational structure, the
framework of $\Gamma$-convergence is extremely robust and versatile.
As mentioned before, this idea has been
extended to evolutionary equations, see for example, \cite{braides2014local,mielke2016evolutionary} for
some overviews of this approach.
In terms of homogenization of Wasserstein distance, from our perspective, the most relevant work are \cite{weinan1991class} and \cite{gangbo2012homogenization}. Our work seems to be
the first to show the homogenized limit for gradient flow problem in Wasserstein space.

A common technique to analyze long time behavior is to make use of functional inequality such as 
\eqref{LSI0}.
This has been applied to Fokker-Planck and McKean-Vlasov type equation in \cite{CarrilloMcCannVillani2003}. {\blue Their} work crucially rely on the convexity of the potential function so that the underlying energy is \emph{geodesically convex}. See also 
\cite{villani2003topics, villani2009optimal} for general monographs on this and related 
idea. In two much earlier work \cite{Tamura84,Tamura87}, Tamura had proved the
exponential convergence to invariant measure for a general McKean-Vlasov equation
in $L^2$ and $L^1$-norms. 
The potential function $U\sim|x|^\alpha$ has sufficient growth at infinity so that
the author can make use of \emph{hyper-} and 
\emph{ultra-contractivity} of the linearized operator at the invariant measure: $\alpha=2$ for the former while $\alpha>2$ for the latter case. These two works of Tamura correspond to our
$\eps=1$.

The convexity requirement for the energy landscape (in Wasserstein space) seems to break down under the influence of oscillatory media. Fortunately, under bounded perturbation of the potential function, LSI still holds. Based on this, we are able to modify the approach 
of \cite{Tamura87} to prove the convergence to an invariant measure exponentially fast. 
The initial condition is fairly general (with finite entropy and second moment). The proof is in Section \ref{longtime} where we will also give more detail description of Tamura's
results.

Before leaving this introduction section, we provide a roadmap of our paper. 
Section \ref{sec2} gives detail formulation of our equation and
description of results in terms of an energy dissipation inequality.
We also introduce the necessary assumptions and notations.
Section \ref{sec3} gives some preliminary results, apriori estimates and
basic convergence of our solution. 
We would in particular highlight here the properties of invariant measures regarding to Logarithmic Sobolev Inequality.
Section \ref{epsEDI-0EDI} provides the proof of our main convergence result
Theorem \ref{thm1} showing the validity of a limiting Wasserstein gradient flow.
Section \ref{longtime} contains the proof of our long time convergence
result Theorem \ref{thm2}. In the Appendix, we give sufficient detail proof for a technical lemma from \cite{Tamura87} and also the computation of
the limiting effective coefficient using formal asymptotic analysis.

\section{
Formulation of generalized gradient flow, Assumptions and Main results}\label{sec2}
In this section, we first formulate the inhomogeneous McKean-Vlasov PDE \eqref{epsGF} as a 
generalized gradient flow. Then we describe our main results on the homogenization and long time behavior of PDE \eqref{epsGF}. We finally clarify   the assumptions and notations of our settings.

\subsection{$\eps$-generalized gradient flow in energy-dissipation inequality (EDI) form}\label{sec2.2}

In this section, we describe more precisely the gradient flow approach used in the current paper.
In particular, we will reformulate \eqref{epsGF} using an equivalent form of energy-dissipation inequality (EDI). 
As motivated earlier, we find such an approach particularly useful when investigating the limiting behavior of \eqref{epsGF} as $\eps\to0$.

We introduce the $\eps$-dissipation on the \emph{tangent plane} $T_{\sP}$ as a functional $\psi_\eps: \sP \times T_{\sP} \to \bR$ defined by
\begin{equation}\label{psi}
\psi_\eps(\rho, s):= \frac12 \int \la \nabla u, B_\eps^{-1} \nabla u \ra \rho \ud x, \quad \text{ with }\, s=-\nabla \cdot \bbs{\rho B_\eps^{-1} \nabla u},
\end{equation}
and the $\eps$-dissipation on the \emph{cotangent plane} $T^*_{\sP}$ as a functional $\psi^*_\eps: \sP \times T^*_{\sP} \to \bR$ defined by
\begin{equation}\label{c_psi}
\psi^*_\eps(\rho, \xi):= \frac12 \int \la \nabla \xi, B_\eps^{-1} \nabla \xi \ra \rho \ud x.
\end{equation}
Note that in \eqref{psi}, given $\rho$ and $s$, the function $u$ might not exist. But then
$\psi$ can be interpreted as the Legendre transform of $\psi^*$  which will
be taken as the definition of $\psi_\eps$:
\begin{equation}\label{psi-def}
\psi_\eps(\rho,s) = \sup_{\xi\in T^*_\rho}\,\, \la \xi, s \ra_{T^*_\rho, T_\rho} - \psi^*_\eps(\rho, \xi).
\end{equation}
If the $\sup$ in \eqref{psi-def} is attained at $\xi^*$, then $s= -\nabla \cdot \bbs{\rho B_\eps^{-1}\nabla \xi^* }$ and 
\begin{equation}
\psi_\eps(\rho,s) 
= \la \xi^*, s \ra_{T^*_\rho, T_\rho} - \psi^*_\eps(\rho, \xi^*)
= \frac12 \int \la \nabla \xi^*, B_\eps^{-1} \nabla \xi^* \ra \rho \ud x.
\end{equation}
On the other hand, if $s$ takes special form as $s=-\nabla \cdot \bbs{\rho B_\eps^{-1} \nabla u}$ for some $u$, then $\psi$ as defined 
in \eqref{psi-def} is indeed given by \eqref{psi}:
\begin{eqnarray*}
\la \xi, s \ra_{T^*_\rho, T_\rho} - \psi^*_\eps(\rho, \xi)
&=&
\int \la \nabla \xi, B_\eps^{-1} \nabla u \ra \rho \ud x
-\frac12\int \la \nabla \xi, B_\eps^{-1} \nabla \xi \ra \rho \ud x\\
&=&
\frac12\int \la \nabla u, B_\eps^{-1} \nabla u \ra \rho \ud x
-\frac12\int \la \nabla (\xi-u), B_\eps^{-1} \nabla (\xi-u) \ra \rho \ud x
\end{eqnarray*}
the $\sup$ of which is attained at $\xi=u$.
Hence \eqref{psi} is consistent with
\eqref{psi-def}.

Applying the Fenchel-Young inequality to the convex functionals 
$\psi_\eps$ and $\psi^*_\eps$, we have
\begin{equation}\label{FenchelYoung}
\la\xi, s\ra \leq \psi_\eps^*(\rho, \xi) + \psi_\eps(\rho, s),
\quad\text{for all}\quad \xi\in T_\rho^*,\,\,\,\text{and}\,\,\,s\in T_\rho,
\end{equation}
with equality holds if and only if
$\xi\in\partial_s\psi_\eps(\rho,s)$ which is equivalent to
$s\in\partial_\xi\psi^*_\eps(\rho,\xi)$.
Here 
$\partial_s\psi_\eps(\rho, s)$ 
and $\partial_\xi\psi_\eps^*(\rho, \xi)$ refer
to the sub-differentials of $\psi_\eps$ and $\psi_\eps^*$ on
$T_\rho$ and $T_\rho^*$, respectively, at a fixed $\rho$. 
More precisely, $\xi\in\partial_s\psi_\eps(\rho, s)$ means
$\xi$ satisfies $s=-\nabla\cdot(\rho B_\eps^{-1}\nabla\xi)$ 
which is also equivalent to $s\in\partial_\xi\psi^*_\eps(\rho, \xi)$.

With the above, we write \eqref{epsGF} in the form of an EDI. Consider
\begin{equation}\label{eng.diss.eqn}
\frac{d}{dt}E_\eps(\rhoe) 
= \left\langle\frac{\delta E_\eps}{\delta\rho},\pt_t\rhoe\right\rangle,
\,\,\,\text{or}\,\,\,
\frac{d}{dt}E_\eps(\rhoe) 
+ \left\langle-\frac{\delta E_\eps}{\delta\rho},\pt_t\rhoe\right\rangle = 0.
\end{equation}
(We recall form \eqref{dE} for $\frac{\delta E_\eps}{\delta\rho}$.)
By \eqref{FenchelYoung}, $\pt_t\rhoe = -\nabla^{W_\eps}E_\eps(\rhoe)=\nabla \cdot \bbs{\rho B_\eps^{-1} \nabla\frac{\delta E_\eps}{\delta \rho}}$ 
\emph{if and only if}
\[
\psi_\eps(\rho^\eps_\tau, \pt_\tau \rho^\eps_\tau ) +  \psi^*_\eps\left(\rho_\tau^\eps, -\frac{\delta E_\eps}{\delta \rho}(\rho_\tau^\eps)\right)
\leq \left\langle-\frac{\delta E_\eps}{\delta\rho},\pt_t\rhoe\right\rangle.
\]
Hence, upon integrating \eqref{eng.diss.eqn}, our gradient flow \eqref{epsGF} 
is equivalent to the following:
\begin{equation}\label{epsEDI}
E_\eps(\rhoe) + \int_0^t \left[ \psi_\eps(\rho^\eps_\tau, \pt_\tau \rho^\eps_\tau ) +  \psi^*_\eps\left(\rho_\tau^\eps, -\frac{\delta E_\eps}{\delta \rho}(\rho_\tau^\eps)\right) \right] \ud \tau  \leq  E_\eps(\rho_0^\eps). 
\end{equation}

Before leaving this section, we write down the following explicit expressions:
\begin{eqnarray}
\psi^*_\eps\left(\rho_\tau^\eps, -\frac{\delta E_\eps}{\delta \rho}(\rho_\tau^\eps)\right) 
&=& \int \left\langle
\nabla\left(\frac{\delta E_\eps}{\delta\rho}\right),
B_\eps^{-1}
\nabla\left(\frac{\delta E_\eps}{\delta\rho}\right)
\right\rangle\rhoe
\,\ud x\nonumber\\
&=&
\frac12 \int \left\la \nabla\left(\log \frac{\rho^\eps_\tau}{\pie} + W\ast\rho^\eps\right), \be 
\nabla\left(\log \frac{\rho^\eps_\tau}{\pie} +W\ast\rho^\eps\right)\right\ra \rho^\eps_\tau\ud x,\label{psi-*-eqn}
\end{eqnarray}
and
\begin{equation}
\psi_\eps\left(\rho^\eps_\tau, \pt_\tau\rho^\eps_\tau\right)
= \frac12 \int \la \nabla u^\eps_\tau, B_\eps^{-1} \nabla u^\eps_\tau \ra \rho_\tau^\eps 
\ud x, \quad \text{ with }\, -\nabla \cdot \bbs{\rho_\tau B_\eps^{-1} \nabla u^\eps_\tau} = \pt_\tau\rho_\tau^\eps.\label{psi-eqn}
\end{equation}

\subsection{Main results}\label{main}
Briefly stated, our main result is that the gradient flow structure is
preserved in the limit, i.e., \eqref{epsGF} converges to a \emph{limiting
gradient flow} as $\eps\to0$. 
We also prove that for both the $\eps$ and limit problems, the solution 
$\rhoe$ and $\rho_t$ converge \emph{exponentially fast} as $t\to\8$ to their respective stationary solutions. The rate of convergence can be characterized in terms of
the constants appearing in some Logarithmic Sobolev Inequality (LSI). Their estimates
are uniform in $\eps$. The results are briefly described as follows.

\begin{enumerate}
\item
(Theorem \ref{thm1}) The solution $\rho^\eps_t$ of 
\eqref{epsGF} converges (weakly) to $\rho_t$ that solves a gradient flow
with respect to a limiting Wasserstein distance $\lbar{W}$,
\begin{equation}\label{0GF}
\pt_t \rho_t 
= -\nabla^{\lbar{W}}\lbar{E}(\rho_t) 
= \nabla \cdot \left[\rho_t \lbar{B}^{-1} \nabla\left(
\log\frac{\rho_t}{\lbar{\pi}} + W*\rho_t
\right)\right]
.
\end{equation}
In the above, the limiting energy is given as
\begin{equation}\label{0Eng}
\lbar{E}(\rho) := 
\int \rho\log\frac{\rho}{\lbar{\pi}}\,\ud x + \frac12\iint  W(x,y)\rho(y)\ud y \rho(x) \ud x,
\end{equation}
where the $\lbar{\pi}$ is simply the spatial average of $\pi_\eps$
with respect to some fast variable -- see \eqref{pi.ave} below.
The matrix $\lbar{B}$ is obtained by taking
appropriate average of $B_\eps$ over the fast variable weighted by the 
solution of a cell problem \eqref{eff.A} or equivalently, by considering 
the $\Gamma$-limit of a variational functional (Theorem \ref{GammaMainThm}).
The Wasserstein distance $\lbar{W}$ is related to $\lbar{B}$ in the same way as
$W_\eps$ to $B_\eps$ -- see \eqref{nablaW}. 
Hence we also have
\begin{equation}
\nabla^{\lbar{W}}\lbar{E}(\rho) =-
\nabla \cdot \left[\rho\lbar{B}^{-1} \nabla\left(
\frac{\delta\lbar{E}}{\delta\rho}(\rho)
\right)\right],\quad\text{where}\quad
\frac{\delta\lbar{E}}{\delta\rho}(\rho)
=\log\frac{\rho}{\lbar{\pi}} + W*\rho.
\end{equation}

Similar to \eqref{epsEDI}, \eqref{psi}, and \eqref{c_psi}, 
equation \eqref{0GF} is equivalent to an EDI, i.e.,
\begin{equation}\label{0EDI}
\lbar{E}(\rho_t) + \int_0^t \left[ \psi(\rho_\tau, \pt_\tau \rho_\tau ) 
+  \psi^*\left(\rho_\tau, -\frac{\delta \lbar{E}}{\delta \rho}(\rho_\tau)\right) \right] 
\ud \tau  \leq  \lbar{E}(\rho_0),
\end{equation}
where $\psi^*: \sP \times T^*_{\sP} \to \bR$ is the limiting dissipation 
functional on the cotangent plane $T^*_{\sP}$ similarly defined by
\begin{equation}\label{c_psiL0}
\psi^*(\rho, \xi):= \frac12 \int \la \nabla \xi, \bar{B}^{-1} \nabla \xi \ra \rho \ud x,
\end{equation}
and $\psi: \sP \times T_{\sP} \to \bR$ is the limiting dissipation 
functional on the tangent plane $T_{\sP}$, given similar to \eqref{psi-def} by
\begin{equation}\label{psiL0}
\psi(\rho, s):= 
\sup_\xi \left\{\int \xi s\ud x-
\frac12 \int \la \nabla \xi, \bar{B}^{-1} \nabla \xi \ra \rho \ud x\right\}.
\end{equation}
By \eqref{0EDI}, for almost every $\tau$, equality holds in the Fenchel-Young inequality for $\psi$ and $\psi^*$:
\[
\left\la -\frac{\delta \lbar{E}}{\delta \rho}(\rho_\tau), \pt_\tau \rho_\tau\right\ra
=
\psi(\rho_\tau, \pt_\tau \rho_\tau ) 
+  \psi^*\left(\rho_\tau, -\frac{\delta \lbar{E}}{\delta \rho}(\rho_\tau)\right).
\]
Hence, 
$s\in\partial_\xi\psi^*\left(\rho_\tau, -\frac{\delta \lbar{E}}{\delta \rho}(\rho_\tau)\right)$, i.e. $s=-\nabla\left(\rho\bar{B}^{-1}\nabla(-\frac{\delta \lbar{E}}{\delta \rho})\right)$.

Inequality \eqref{0EDI} is derived as the limit of \eqref{epsEDI}.
The proof relies on $\Gamma$-convergence type argument which is well-known for stationary problems. Both variational lower- and upper-bounds are utilized.
Then a technical lemma is invoked to extend the result to time dependent case. We remark that our result is also consistent with the statement obtained by 
using asymptotic expansion described in Appendix \ref{asym.exp}.

\item
(Theorem \ref{thm2})
The energy and solution of \eqref{epsGF} converge to their limiting values exponentially fast:
\[
E_\eps(\rhoe)-E_\eps(\mu_\eps) \lesssim e^{-\frac{2\lambda}{\alpha_B}t},
\quad\text{and}\quad
    \|\rhoe- \mu_\eps\|^2_{L^1}   \lesssim e^{-\frac{2\lambda}{\alpha_B}t},
\]
where $\mu_\eps$ is the invariant measure, or equivalently the stationary
solution of \eqref{mainFP}, $\lambda$ is the LSI constant for $\pie$ and 
$\alpha_B$ is related to estimates for $B_\eps$. Both of the above hold uniformly in the limit
$\eps\to0$.

A brief remark about the proof is in place. The key is the
energy dissipation functional \eqref{EDE} and the LSI for $\pie$ \eqref{LSI0}. Algebraically, the energies in these two expressions are not exactly the same --- see \eqref{Erelation}. We resort to the technical result \cite[Lemma 6.7]{Tamura87} to handle this discrepancy. In the hindsight, we rely on the non-degeneracy or convexity of the energy at single particle invariant measure $\pie$.
We then have the exponential decay of energy to its minimum. An application of Pinsker's inequality gives the
$L^1$-convergence. 

\end{enumerate}

\subsection{Assumptions and Notations.}\label{sec:assumptions}
We introduce here the conventions and notations used in this paper. 

First, the symbol $\sP(\mathbb R^n)$, or simply $\sP$, refers to the space of probability measures on $\mathbb R^n$. Since we will be able to establish sufficient regularity for our
solution $\rhoe$ (and its limit $\rho_t$), without loss of generality, any element 
$\rho\in\sP(\mathbb R^n)$ will henceforth be assumed to possess a density with respect to the Lebesgue measure. 
In order for the Wasserstein distance $W_\eps$ \eqref{Wc} to be well defined, we need to consider probability measures with finite second order moment, i.e.,
$\displaystyle \int |x|^2\rho(x)\ud x < \8.$ Elements in $\sP(\mathbb R^n)$ satisfying this
condition will be denoted by $\sP_2(\mathbb R^n)$. Note that if $B_\eps=I$, then 
$W_\eps$ corresponds to the classical $W_2$ for which the cost function $c_\eps$
is simply given by $|x-y|$:
\[
|x-y|^2= \min\left\{ \int_0^1 \la \dot{z}_t, \dot{z}_t \ra \ud t, \quad z: [0,1]\longrightarrow\bR^n,\,\, z_0 = x, \,\, z_1=y \right\}.
\]
We would sometimes use $(\sP_2, W_\eps)$ or $(\sP_2, W_2)$ if we would like to emphasize 
the underlying metric. Otherwise, $\sP$ is endowed with the usual narrow (or weak) topology.

Second, common function spaces used in this paper are $L^1(\mathbb R^n)$, $L^2(\mathbb R^n)$, $L^\infty(\mathbb R^n)$ and relevant Sobolev spaces.
We will also consider the following weighted spaces $L^2$-spaces,
\[
\|f\|_{L^2(\sigma)}=\left(\int |f(x)|^2\sigma(x)\ud x\right)^{\frac{1}{2}},\quad
\|f\|_{H^1(\sigma)}
=\left(\int |\nabla f(x)|^2\sigma(x)\ud x\right)^{\frac{1}{2}}+\|f\|_{L^2(\sigma)}
.
\]
The weight $\sigma$ will be taken to be some underlying invariant measures. 
Unless otherwise stated, the domain of integration will always be $\mathbb R^n$. For simplicity, $L^p$ without any weight means $L^p(\mathbb R^n)$.

Third, as we will consider functions that oscillate on a 
small length scale, $0 < \eps \ll 1$, it is convenient to introduce the 
following fast variable
\begin{equation}\label{fast.var}
y:=\frac{x}{\eps}.
\end{equation}
The domain for $y$ is taken to be the $n$-dimensional torus $\mathbb T^n$
when the oscillatory functions are $1$-periodic in $y$.
The symbols $\wra$ and $\lra$ refer to weak and strong 
convergence in appropriate function spaces. 
For the convergence of a sequence of functions $f_\eps$ as $\eps\to0$, 
we will use the same notation even if the convergence only holds upon
extraction of subsequence. The convergence can be established for the whole
sequence if the limiting equation has unique solution which is the case
for our McKean-Vlasov equation \eqref{0GF}.

Next we state the general assumptions for our settings.
They are used mostly for concreteness and can be much
relaxed if we opt to use more technical tools.
More precise and quantitative requirements will be given in respective results.

\begin{enumerate}[(i)]
\item
The matrix $B_\eps$ is assumed to take the form,
\begin{equation}
\label{Bform}
B_\eps(x) = B\left(\frac{x}{\eps}\right),\,\,\,\text{or}\,\,\,
B_\eps(x) = B(y),
\end{equation}
where $B(\cdot)$ is $1$-periodic.
Furthermore, $B(\cdot)$ is bounded and uniformly positive definite, i.e.,
there is a constant $\alpha_B>0$ such that for all $y\in\mathbb T^n$, it holds that
\begin{equation}\label{B.est}
\frac{1}{\alpha_B}I \leq B(y) \leq \alpha_B I.
\end{equation}
The constant $\alpha_B$ can be chosen such that
$\|B\|,\,\|B^{-1}\| \leq \alpha_B$.

\item
The potential function $U_\eps$ is bounded from below and has sufficient growth at $\8$ 
to ensure
$\int e^{-U_\eps}\ud x < \8$ so that the (single-particle) Gibbs measure $\pie$ 
\eqref{pie0} is well-defined. 
Though it can certainly be more general, for concreteness, 
we consider $U_\eps$ taking the following additive form:
\begin{equation}\label{Ue}
U_\eps(x) = U_0(x) + V_\eps(x),
\end{equation}
where $U_0$ is some fixed function on $\mathbb R^n$ and 
$V_\eps(x) = V(y)\in L^\8(\mathbb T^n)$. Let
\begin{equation}\label{pi0}
\pi_0(x):=\frac{1}{\Lambda_0}e^{-U_0(x)},\,\,\,\text{where}\,\,\,
\Lambda_0:=\int e^{-U_0(x)}\ud x < \infty.
\end{equation}
We further introduce $\beta_V:=\|V\|_{L^\8(\mathbb T^n)}$.

With the above, the single particle invariant measure $\pie$ \eqref{pie0} can be written as:
\begin{equation}
\label{pie} 
\pie(x) 
=\frac{1}{\Lambda_\eps}e^{-U_0(x)-V(\frac{x}{\eps})}
=\frac{1}{\Lambda_\eps}e^{-U_0(x)}e^{-V(\frac{x}{\eps})}
=:\pi\left(x,\frac{x}{\eps}\right)
\end{equation}
Then the weak limit of $\pie$ is given by averaging over the fast
variable:
\begin{equation}\label{pi.ave}
\lbar{\pi}(x) := \text{wk-}\lim_\eps\pie(x)=\int \pi(x,y)\ud y
\end{equation}
which is exactly equal to $\pi_0$.

We further assume that $U_0, V$ (and hence $\pi$), and $B$ are all smooth enough functions in the $x$ and $y$ variables.

Two further remarks are in place. First, note that in \eqref{Ue}, $V(y)$ is some $L^\8$-function defined on $\mathbb T^n$. Hence in general $U_\eps$ and $\pie$ do not
converge strongly. On the other hand, if we make the more restrictive assumption that $V_\eps(x) = \eps V(\frac{x}{\eps})= O(\eps)$, then $U_\eps$ and $\pie$ do have strong limits.
We refer respectively the former as Type I (oscillatory) and the latter as Type II (uniform) case. The results in this paper cover the Type I case but we will also comment on the results in the simpler, Type II case -- see the end of 
Appendix \ref{asym.exp}.

Second, the long time behavior of $\rhoe$ relies on functional inequalities related to $\pie$, in particular
$\pie$ satisfies LSI. This requires $U_\eps$ (or $U_0$ in our case) 
to have sufficient growth at $\8$. A specific example is the Gaussian case, 
$U_0,\,U_\eps \sim e^{-|x|^2}$. See the more general discussion after
Lemma \ref{lem:LSI-single}.

\item We consider symmetric interacting kernel $W$, 
$W(x,y)=W(y,x)$.
It is further assumed to be bounded and Lipschitz. We henceforth define the following finite constants,
\begin{equation}\label{W.est}
    \gamma_W:=\|W\|_{L^\8(\mathbb R^n\times\mathbb R^n)},\,\,\,\text{and}\,\,\,
    \delta_W:=\|\nabla W\|_{L^\8(\mathbb R^n\times\mathbb R^n)} < \8.
\end{equation}
It turns out we need $\gamma_W$ and $\delta_W$ to be appropriately small
to ensure uniqueness of the invariant measure $\mu_\eps$ 
and for the long time convergence of $\rhoe$. However, the smallness does not
depend on $\eps$. See Section \ref{longtime} for more precise statement.

\item	The initial data $\rho^\eps_0$ is assumed to be
well-prepared in the following sense:
\begin{equation}\label{id_con}
\text{there is a $\rho_0$ such that 
as $\eps\to0$, it holds that $\rho^\eps_0 \wra \rho_0$
and
$E_\eps(\rho^\eps_0) \to \lbar{E}(\rho_0)<\8.$}
\end{equation}
Here $E_\eps$ and $\lbar{E}$ are given by \eqref{E_mc_eps} and \eqref{0Eng}.

Note that as the entropy function has super-linear growth, $\rho_0^*$ automatically
has a density with respect to the Lebesgue measure. This is also consistent with the 
Dunford–Pettis Theorem -- see also \cite[p.151]{ABS}. As mentioned at the beginning of this section, due to regulatory property of our solution, such a 
property will be preserved for $t>0$.
\end{enumerate}

We will also use the following general conventions.
\begin{itemize}
\item For time dependent problems, we are dealing with
functions depending on both the space and time variables $x,t$. For ease of notation, 
given a function $f=f(x,t)$, we often use $f_t$ to denote $f(t,\cdot)$, i.e., 
the slice of $f$ at a fixed time $t$.

\item The symbol $C$ refers to some generic constant that could depend on 
$\alpha_B$, $\beta_V$, $\gamma_W$, $\delta_W$ and initial data. 
The dependence on specific variable or quantity will be made explicit if that can reveal extra information. For example, dependence on the time variable $t$ will be denoted by $C_t$ and by $C_T$ for $0< t < T$.

\item The notation $\lesssim$ means that the inequality holds up to some
universal multiplicative constant. The value of the constant can change from  
line to line.
\end{itemize}

\section{Some a-priori estimates}\label{sec3}
In this section, we analyze properties of invariant measures $\mu_\eps$ and provide
some a prior estimates of the solution $\rhoe$. All the essential constants will be
shown to be independent of $\eps$.

For the rest of this paper, we will make use of the assumptions (i)--(iv) introduced in the previous section. In particular, we recall here the constants $\alpha_B$ \eqref{B.est},
$\beta_V$, and $\gamma_W$ and $\delta_W$ \eqref{W.est} defined for the quantities $B$, $V$, and $W$.

\subsection{Existence and properties of invariant measures $\mu_\eps$}

Recall the free energy $E_\eps$ \eqref{E_mc_eps} and its first variation $\delta E_\eps$ \eqref{dE}. Any minimizer $\mu_\eps$ of $E_\eps$ satisfies the following 
equation
\begin{equation}\label{statMF}
    \mu_\eps = \frac{1}{\Xi_\eps} e^{-U_\eps - W*\mu_\eps}, \quad \text{ with } \, \Xi_\eps=\int e^{-U_\eps - W*\mu_\eps} \ud x. 
\end{equation}
Such a $\mu_\eps$ is also a stationary solution for the
McKean-Vlasov equation \eqref{epsFP}. 
As the above is a nonlinear equation, it is not apriori clear if a solution exists and is unique. To understand $\mu_\eps$, we find it instructive
to first analyze the invariant measure $\pi_\eps$ \eqref{pie0} for the single particle case. 
 
The first result presents some simple but useful properties of the energy functional
$E_\eps$.

\begin{lem}\label{lem:E}
Let the free energy $E_\eps$ be defined as \eqref{E_mc_eps}. 
It has the following properties:
\begin{enumerate}[(i)]
    \item $E_\eps$ is   lower semicontinuous with respect to narrow convergence in $\sP(\bR^n)$ (the duality with $C_b(\bR^n)$);
    
    \item $\{\rho; \,E_\eps(\rho)\leq C\}$ is sequentially compact with respect to narrowly convergence in $\sP(\bR^n)$ (the duality with $C_b(\bR^n)$);
    
    \item If in addition, $\gamma_W<1$, then $E_\eps$ is strictly convex.
\end{enumerate}

\end{lem}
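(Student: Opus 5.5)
The plan is to work from the second (relative entropy) form of $E_\eps$ in \eqref{E_mc_eps}:
\[
E_\eps(\rho)=H(\rho\,|\,\pie)+\tfrac12\iint W(x,y)\rho(x)\rho(y)\ud x\ud y-\log\Lambda_\eps ,
\]
where $H(\rho|\pie)=\int\rho\log(\rho/\pie)\ud x$. The three parts are treated separately. The constant plays no role. The interaction part is bounded, with $|\tfrac12\iint W\rho\rho|\le \tfrac12\gamma_W$ for $\rho\in\sP$.

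\textbf{For (i)}, lower semicontinuity of the relative entropy under narrow convergence is standard. One route is the Donsker--Varadhan dual formula $H(\rho|\pie)=\sup_{\phi\in C_b}\{\int\phi\,\rho-\log\int e^{\phi}\pie\}$, which exhibits it as a supremum of narrowly continuous functionals. For the interaction term, note that if $\rho_k\wra\rho$ narrowly then $\rho_k\otimes\rho_k\wra\rho\otimes\rho$ narrowly on $\bR^n\times\bR^n$. Since $W$ is bounded and continuous (it is Lipschitz), $\iint W\,\ud\rho_k\ud\rho_k\to\iint W\,\ud\rho\ud\rho$. So this term is in fact continuous, and the sum is lower semicontinuous.

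\textbf{For (ii)}, if $E_\eps(\rho)\le C$, then $H(\rho|\pie)\le C+\tfrac12\gamma_W+\log\Lambda_\eps$. Sublevel sets of relative entropy with respect to a fixed probability measure are tight. Indeed, for a compact $K$ with $\pie(K^c)$ small, the entropy inequality $\rho(A)\le \frac{H(\rho|\pie)+\log 2}{\log(1+1/\pie(A))}$ (with $A=K^c$) gives uniform tightness. Prokhorov then yields narrow sequential precompactness, and (i) gives closedness of the sublevel set. The limit has a density by the superlinear growth of $s\log s$, via Dunford--Pettis as remarked in assumption (iv).

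\textbf{For (iii)}, along a segment $\rho_s=(1-s)\rho_0+s\rho_1$, $s\in[0,1]$, with $\sigma=\rho_1-\rho_0$ (so $\int\sigma=0$), compute the second derivative:
\[
\frac{d^2}{ds^2}E_\eps(\rho_s)=\int\frac{\sigma^2}{\rho_s}\ud x+\iint W(x,y)\sigma(x)\sigma(y)\ud x\ud y .
\]
The interaction term is bounded by $\gamma_W\|\sigma\|_{L^1}^2$. By Cauchy--Schwarz, $\|\sigma\|_{L^1}^2\le\int\frac{\sigma^2}{\rho_s}\int\rho_s=\int\frac{\sigma^2}{\rho_s}$. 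Hence the second derivative is at least $(1-\gamma_W)\int\sigma^2/\rho_s$, which is strictly positive when $\sigma\ne0$ and $\gamma_W<1$. This gives strict convexity. Equivalently, one can combine the strict convexity of $s\mapsto s\log s$ with the estimate of the quadratic form on differences.

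The \textbf{main obstacle} is the rigor in (iii). The second derivative may be infinite, or undefined where $\rho_s$ vanishes. I would therefore avoid differentiating. Instead I would use the exact identity
\[
E_\eps(\rho_s)=(1-s)E_\eps(\rho_0)+sE_\eps(\rho_1)-[\text{entropy gap}]-\tfrac{s(1-s)}2\iint W\sigma\sigma .
\]
The entropy gap is bounded below by $s(1-s)$ times a quantity dominating $\|\sigma\|_{L^1}^2$. This follows from a Pinsker-type bound, using $(1-s)H(\rho_0|\rho_s)+sH(\rho_1|\rho_s)$ together with $H(\rho_i|\rho_s)\ge\frac12\|\rho_i-\rho_s\|_{L^1}^2$, which yields a gap of at least $\tfrac12 s(1-s)\|\sigma\|_{L^1}^2$. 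This requires checking constants: the Pinsker route only gives strict convexity for $\gamma_W<1$ with the gap exactly balancing. If the constant falls short, I would fall back to the pointwise Hessian argument, applied with truncation.
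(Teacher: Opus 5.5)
Your proposal is correct and matches the paper: for (i)--(ii) the paper simply cites the standard relative-entropy arguments (Tamura, Theorem 3.1; Ambrosio et al., Chapter 15), which you sketch, and for (iii) it uses your second-variation bound, with $\int\sigma^2/\rho_s\ge\|\sigma\|_{L^1}^2$ (the paper's Jensen step is your Cauchy--Schwarz) set against $|\iint W\sigma\sigma|\le\gamma_W\|\sigma\|_{L^1}^2$. Your Pinsker-based variant needs no fallback, since it closes exactly under $\gamma_W<1$ as $E_\eps(\rho_s)\le(1-s)E_\eps(\rho_0)+sE_\eps(\rho_1)-\tfrac{s(1-s)}{2}(1-\gamma_W)\|\sigma\|_{L^1}^2$, which is a rigorous, derivative-free version of the paper's estimate.
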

\begin{proof}
    Statements (i) and (ii) are standard -- they are in fact stated in \cite[Theorem 3.1]{Tamura87}. We refer to \cite[Chapter 15]{ABS} for a general exposition.

    For statement (iii), we compute the second variation of $E_\eps$
    \begin{align*}
        \frac{\ud^2}{\ud s^2} E_\eps(\rho+s q)=& \int \frac{q^2}{\rho} \ud x + \int (W*q)q \ud x \\
        \geq & (\int |q| \ud x)^2 + \int (W*q)q \ud x,
    \end{align*}
    where we used    Jensen's inequality for $\int (\frac{q}{\rho})^2 \rho \ud x$.
    Since $\|W*q\|_{L^\8}\leq \|W\|_{L^\8}\|q\|_{L^1}$, we further  have
    \begin{align*}
        \frac{\ud^2}{\ud s^2} E_\eps(\rho+s q) 
        \geq    \bbs{1-  \|W\|_{L^\8}}\bbs{\int |q| \ud x}^2,
    \end{align*}
    which by $\|W\|_{L^\8}<1$, concludes that
    $E_\eps$ is strictly convex.
    \end{proof}
    
By the direct method in calculus of variations and Lemma \ref{lem:E}, we conclude 
\begin{cor}\label{cor:E}
    If $\gamma_W<1$, then there exists a unique minimizer, denoted as $\mu_\eps$, of the free energy $E_\eps$. 
\end{cor}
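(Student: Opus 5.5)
The plan is the direct method of the calculus of variations, using Lemma \ref{lem:E}. First I will check that $E_\eps$ is bounded below and not identically $+\infty$. Since $W$ is bounded, the interaction term satisfies $\frac12\iint W\rho\rho \geq -\frac12\gamma_W$. The relative entropy $\int\rho\log(\rho/\pie)\ud x$ is nonnegative by Jensen's inequality. Together with the second line of \eqref{E_mc_eps}, this gives $E_\eps(\rho)\geq -\frac12\gamma_W-\log\Lambda_\eps>-\infty$. Taking $\rho=\pie$ shows $\inf E_\eps\leq \frac12\gamma_W-\log\Lambda_\eps<\infty$. Here $\Lambda_\eps<\infty$ by assumption (ii), and $\Lambda_\eps\geq e^{-\beta_V}\Lambda_0>0$.

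Next I will take a minimizing sequence $\rho_k$ with $E_\eps(\rho_k)\to \inf E_\eps$. The sequence lies in a sublevel set $\{E_\eps\leq C\}$, which is sequentially compact for narrow convergence by Lemma \ref{lem:E}(ii). So a subsequence converges narrowly to some $\mu_\eps\in\sP(\bR^n)$. By lower semicontinuity, Lemma \ref{lem:E}(i), $E_\eps(\mu_\eps)\leq\liminf E_\eps(\rho_k)=\inf E_\eps$. Hence $\mu_\eps$ is a minimizer. It has finite entropy, so it is absolutely continuous with respect to Lebesgue measure, which is consistent with the conventions of Section \ref{sec:assumptions}.

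For uniqueness, I will use strict convexity, Lemma \ref{lem:E}(iii), which requires $\gamma_W<1$. Suppose $\mu^1\neq\mu^2$ are both minimizers. The segment $\rho_s=(1-s)\mu^1+s\mu^2$ stays in $\sP$, and strict convexity gives $E_\eps(\rho_{1/2})<\frac12E_\eps(\mu^1)+\frac12E_\eps(\mu^2)=\min E_\eps$, a contradiction.

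The one point to handle with care is that the second-variation computation in Lemma \ref{lem:E} is formal, since $\rho$ may vanish where $q\neq0$. To avoid this I would argue convexity directly along the segment. Entropy is convex, and strictly so when $\mu^1\neq\mu^2$, because $t\mapsto t\log t$ is strictly convex. The quadratic part contributes $-\frac{s(1-s)}{2}\iint W\,q\,q$ with $q=\mu^2-\mu^1$, which is bounded by $\frac{s(1-s)}{2}\gamma_W\|q\|_{L^1}^2$. For the entropy I would use the quantitative bound
\[
(1-s)H(\mu^1)+sH(\mu^2)-H(\rho_s)\geq \tfrac{s(1-s)}{2}\|q\|_{L^1}^2 ,
\]
where $H(\rho)=\int\rho\log\rho\,\ud x$. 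This bound follows from the Pinsker-type inequality applied to $\mathrm{KL}(\mu^i\|\rho_s)$. Combining the two estimates, $\gamma_W<1$ gives a strict gap, and this closes the argument.
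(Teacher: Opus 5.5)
Your proposal is correct and follows the paper's route. The paper also uses the direct method, with narrow compactness and lower semicontinuity from Lemma~\ref{lem:E}(i)--(ii), and gets uniqueness from strict convexity (Lemma~\ref{lem:E}(iii)) when $\gamma_W<1$. Your Pinsker-based gap $\frac{s(1-s)}{2}(1-\gamma_W)\|\mu^2-\mu^1\|_{L^1}^2$ along the segment is a rigorous, integrated version of the paper's formal second-variation bound $\int q^2/\rho\ud x\geq\|q\|_{L^1}^2$. Running it with the relative entropy $\int\rho\log(\rho/\pie)\ud x$ in place of $\int\rho\log\rho\ud x$ avoids any $\infty-\infty$ issue.
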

It is an interesting question to consider the case when the invariant measure is
non-unique. We refer to Section \ref{sec:results} for some general discussion about this.
 
Next, we consider Log-Sobolev Inequality (LSI) associated with an underlying
probability measure $\pi$. The overall theory is initiated by Bakry-Emery theory \cite{BakryEmery1985}. Precisely, we say $\pi\in \sP(\bR^n)$ satisfies LSI($\lambda$) if
\begin{equation}\label{LSI}
    \int \rho \log \frac\rho{\pi} \ud x \leq \frac{1}{2\lambda}\int \rho |\nabla \log \frac\rho{\pi}|^2\ud x, \quad \forall \rho \in \sP(\bR^n).
\end{equation}
Another useful version of the above is given in terms of the function
$f:=\frac{\rho}{\pi}$:
\begin{equation}
\int f (\log f) \pi\ud x \leq \frac{1}{2\lambda}\int \frac{|\nabla f|^2}{f}\pi\ud x
\,\,\left(=\frac{2}{\lambda}\int |\nabla\sqrt{f}|^2\pi\ud x\right).
\end{equation}
The right hand side of the above is often called the \emph{Fisher Information}.
From \eqref{LSI}, a simple calculation using the representation $\frac{\rho}{\pi} = 1 + \eps g$ with $\int g\pi \ud x=0$ and $\eps\ll 1$, implies the following Poincare inequality,
\begin{equation}\label{PI}
    \|g\|_{L^2(\pi)}^2 \leq \frac{1}{\lambda}\|\nabla g\|_{L^2(\pi)}^2.
\end{equation}
We refer to \cite[Chapter 9]{villani2003topics} for a general introduction to the 
theory of LSI, its connection to optimal transport and applications to nonlinear PDEs.
 
As a starting point, recall the form of the potential function 
$U_\eps = U_0 + V_\eps$ \eqref{Ue} and the Gibbs measures $\pi_0$ and $\pie$
\eqref{pi0} and \eqref{pie}.
Suppose $\pi_0$ satisfies LSI. We would like to extend LSI from $\pi_0$ to $\pie$.
This is achieved in the next result. 
It is noted in many places, 
see for instance \cite[Theorem 9.9(ii)]{villani2003topics} and \cite[Proposition 5.1.6]{bakry2014analysis},
but for reader's convenience, we provide a proof.

\begin{lem}[LSI for $\pi_\eps$]\label{lem:LSI-single}
Suppose $\pi_0$ satisfies LSI($\lambda_0$) for some constant
$\lambda_0$, then $\pie$ (defined in \eqref{pie}) 
satisfies LSI($\lambda$) with $\lambda = \lambda_0 e^{\min|\ve|-\max|\ve|}$.
\end{lem}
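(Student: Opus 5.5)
This is the Holley--Stroock bounded perturbation argument. The key tool is the variational characterization of relative entropy: for any probability measure $\sigma$ and any $f\ge 0$,
\[
\mathrm{Ent}_\sigma(f) := \int f\log f\,\sigma\ud x - \Big(\int f\sigma\ud x\Big)\log\Big(\int f\sigma\ud x\Big) = \inf_{c>0}\int \big(f\log f - f\log c - f + c\big)\sigma\ud x .
\]
The integrand $\Phi_c(f)=f\log f - f\log c - f + c$ is nonnegative, since it is the Bregman divergence of $t\log t$. The infimum is attained at $c=\int f\sigma\ud x$, which is a one-line check.

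Fix $\rho\in\sP$ and set $f=\rho/\pie$, so $\int f\pie=1$ and the left side of \eqref{LSI} for $\pie$ equals $\mathrm{Ent}_{\pie}(f)$. By \eqref{pie} and \eqref{pi0},
\[
\pie = \frac{\Lambda_0}{\Lambda_\eps}e^{-\ve}\pi_0 ,
\]
and $\Lambda_\eps=\Lambda_0\int e^{-\ve}\pi_0\ud x$. Hence the density $h:=\pie/\pi_0$ satisfies
\[
\frac{\sup h}{\inf h} \le e^{\max \ve-\min\ve} .
\]
The paper writes the exponent as $\max|\ve|-\min|\ve|$; I will prove the bound with oscillation $\operatorname{osc}\ve=\max\ve-\min\ve$. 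This matches the paper's exponent when $\ve\ge0$. In general the oscillation is what the argument naturally gives, and since it is at most $2\beta_V$, it is in any case uniform in $\eps$.

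Using the variational formula and $\Phi_c\ge0$,
\[
\mathrm{Ent}_{\pie}(f)\le \int\Phi_c(f)\,h\,\pi_0\ud x \le (\sup h)\int \Phi_c(f)\pi_0\ud x \quad\text{for all } c>0 .
\]
Taking the infimum over $c$ gives $\mathrm{Ent}_{\pie}(f)\le(\sup h)\,\mathrm{Ent}_{\pi_0}(f)$. Next I apply LSI($\lambda_0$) for $\pi_0$. LSI is homogeneous in $f$, so the version for unnormalized $f$ follows from \eqref{LSI} applied to $f/\int f\pi_0$. This gives
\[
\mathrm{Ent}_{\pi_0}(f)\le \frac{1}{2\lambda_0}\int\frac{|\nabla f|^2}{f}\pi_0\ud x \le \frac{1}{2\lambda_0}(\inf h)^{-1}\int\frac{|\nabla f|^2}{f}\pie\ud x .
\]
Combining the two bounds,
\[
\mathrm{Ent}_{\pie}(f)\le \frac{1}{2\lambda_0}\,\frac{\sup h}{\inf h}\int \rho\Big|\nabla\log\frac{\rho}{\pie}\Big|^2\ud x ,
\]
which is LSI($\lambda$) with $\lambda=\lambda_0e^{-\operatorname{osc}\ve}$. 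The normalizing constants $\Lambda_0/\Lambda_\eps$ cancel in the ratio $\sup h/\inf h$, so the constant is independent of them.

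The only delicate step is the variational formula for entropy and the fact that the minimizing constant is what allows the unnormalized comparison; everything else is pointwise comparison of weights. I would also briefly justify applying LSI to $f$ that is not normalized with respect to $\pi_0$, and note that $\rho$ may be assumed to have finite Fisher information (otherwise the inequality is trivial), with a standard truncation/density argument if needed.
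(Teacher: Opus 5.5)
Your proof is correct and is essentially the paper's argument. The paper applies LSI($\lambda_0$) to the tilted probability density $b\rho e^{\ve}/a$, where $a=b\int\rho e^{\ve}\ud x=\int f\pi_0\ud x$, and its use of $x\log x-x+1\ge 0$ together with $\log a+1-a\le 0$ is exactly your variational bound $\mathrm{Ent}_{\pie}(f)\le\int\Phi_c(f)\pie\ud x$ at $c=a$, followed by the same pointwise comparison of weights. Your exponent $\max\ve-\min\ve$ is the correct one: the paper's step $e^{-\ve}\le e^{-\min|\ve|}$ tacitly requires $\ve\ge0$, which can always be arranged by shifting $V$ by a constant, since that leaves $\pie$ unchanged.
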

\begin{proof}
   First, denote $\pi_0=\frac{e^{-U_0}}{\int e^{-U_0}}$. By assumption, we have
    \begin{equation}\label{LSI_pi0}
    \int \rho \log \frac\rho{\pi_0} \ud x \leq \frac{1}{2\lambda_0}\int \rho |\nabla \log \frac\rho{\pi_0}|^2\ud x, \quad \forall \rho \in \sP(\bR^n).
\end{equation}
Denote $b:=\int \pio e^{-V_\eps} \ud x.$ Then $\pie = \frac{\pio e^{-V_\eps}}{b} \in \sP(\bR^n)$.
By \eqref{LSI_pi0}, for  $\frac{b \rho e^{V_\eps}}{a}\in \sP(\bR^n)$ with $a:=b \int \rho e^V \ud x$, we know
\begin{align}\label{tm_lhs}
    \text{LHS}:=\int \frac{b \rho e^{V_\eps}}{a} \log \frac{b \rho e^{V_\eps}}{a \pio} \ud x \leq \frac{1}{2\lambda_0} \int \frac{b \rho e^{V_\eps}}{a} \left|\nabla \log \frac{b \rho e^{V_\eps}}{a \pio} \right|^2 \ud x\leq \frac{e^{\max |V_\eps|}}{2\lambda_0} \frac{b}{a} \int \rho |\nabla \log \frac{\rho}{\pie}|^2 \ud x.
\end{align}

Second, recast the relative entropy as
\begin{align}\label{entL}
  \int \rho \log \frac{\rho}{\pie} =\int \rho \log \bbs{ a \frac{b\rho e^{V_\eps}}{a\pio} } \ud x = \log a + \int \rho \log \frac{b\rho e^{V_\eps}}{a\pio}\ud x.  
\end{align}
We estimate the last term above through
\begin{align*}
   b\int \rho \log \frac{b\rho e^{V_\eps}}{a\pio}\ud x =& \int \frac{b\rho}{a\pio e^{-V_\eps}} \log \frac{b\rho e^{V_\eps}}{a\pio} a\pio e^{-\ve} \ud x\\
   =& \int \left(\frac{b\rho}{a\pio e^{-V_\eps}} \log \frac{b\rho e^{V_\eps}}{a\pio} - \frac{b\rho}{a\pio e^{-\ve}}+1 + \frac{b e^\ve}{a}-1 \right)a\pio e^{-\ve} \ud x\\
   =&\int \left(\frac{b\rho}{a\pio e^{-V_\eps}} \log \frac{b\rho e^{V_\eps}}{a\pio} - \frac{b\rho}{a\pio e^{-\ve}}+1 \right) a\pio e^{-\ve} \ud x + b - ab=:I +b-ab,
\end{align*}
where we used $\rho, \pio\in\sP(\bR^n)$ in the second equality. Since $x\log x - x +1 \geq 0$, we have
\begin{align*}
    I\leq e^{-\min|\ve|}\int \left(\frac{b\rho}{  e^{-V_\eps}} \log \frac{b\rho e^{V_\eps}}{a\pio} - \frac{b\rho}{  e^{-\ve}}+a\pio \right) \ud x = e^{-\min|\ve|}\int \frac{b\rho}{  e^{-V_\eps}} \log \frac{b\rho e^{V_\eps}}{a\pio} \ud x.
\end{align*}
Plugging this estimate into \eqref{entL}, we obtain
\begin{equation}
\begin{aligned}
   b\int \rho \log \frac{\rho}{\pie}\ud x =b \log a + b-ab + I \leq e^{-\min|\ve|}\int \frac{b\rho}{  e^{-V_\eps}} \log \frac{b\rho e^{V_\eps}}{a\pio} \ud x,
\end{aligned}    
\end{equation}
where we used $\log a+1-a\leq 0.$
Combining this with \eqref{tm_lhs}, we obtain
\begin{equation}
    \int \rho \log \frac{\rho}{\pie}\ud x \leq \frac{a}{b} e^{-\min|\ve|} \text{LHS} \leq \frac{1}{2\lambda_0}e^{\max|\ve|-\min|\ve|}\int \rho |\nabla \log \frac{\rho}{\pie}|^2 \ud x.
\end{equation}
This implies that $\pie$ satisfies LSI with the stated $\lambda$ value.
\end{proof}
 
Next we comment about the assumption underlying the above result. 
For what follows, we recall some further properties of $\pi$ satisfying LSI, mainly from 
\cite[Theorem 9.9]{villani2003topics}.
Note that we are dealing with the following two probability measures:
\begin{equation}\label{three.pi}
\pi_0 (= \lbar{\pi} = \text{wk-}\lim_\eps \pie) \sim e^{-U_0}
\,\,\,\text{and}\,\,\,
\pie \sim e^{-U_0 - V_\eps}.
\end{equation}
If $U_0$ is uniformly convex in the sense that
$D^2U_0 \gtrsim  I$, then $\pi_0$ satisfies LSI($\lambda_0$) for some $\lambda_0$.
As an explicit example, we can take $U_0\sim |x|^2$ and thus $\pi_0 \sim e^{-|x|^2}$ which is the Gaussian case.
By our assumption \eqref{Ue} on the form of $U_\eps$, the two measures $\pi_0$ \eqref{pi0} and $\pie$ \eqref{pie} are comparable to each other up to multiplicative constants. Hence by Lemma \ref{lem:LSI-single}, there is a single number $\lambda$ so that both $\pi_0$ and $\pie$ satisfy $\text{LSI}(\lambda)$.
In the other direction, if $\pi$ satisfies an LSI, then there is a constant $C$ such that
$
\int e^{C|x|^2}\pi(x) \ud x < \infty
$ -- see also \cite[Proposition 5.4.1]{bakry2014analysis}.
    
Next we present the following compactness result in the whole space. 
A self-contained proof is provided here but see \cite{Hooton1981} for more general statements of this sort.
\begin{lem}\label{lem_compact}
Let $\pi$ satisfy Log-Sobolev inequality \eqref{LSI},
then the following compact embedding holds in the whole space 
\begin{equation}\label{compact}
H^1(\pi) \hookrightarrow\hookrightarrow L^2(\pi).
\end{equation}
Precisely, for any $A<\infty$,
\[
\left\{u: \int u^2\pi\ud x < A \right\}\bigcap
\left\{u: \int |\nabla u|^2\pi\ud x < A \right\}
\quad \text{is compact in $L^2(\pi)$.}
\]
\end{lem}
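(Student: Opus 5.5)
The plan is to obtain compactness from a tightness estimate coming from LSI plus local Rellich compactness. Take a bounded sequence $u_k$ with $\int u_k^2\pi<A$ and $\int|\nabla u_k|^2\pi<A$. The aim is to show that the $L^2(\pi)$-mass of $u_k$ outside a large ball $B_R$ is small, uniformly in $k$, and then to extract a subsequence that converges on each ball.

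\emph{Step 1: uniform tail bound from LSI.} Apply LSI \eqref{LSI} with $f=u^2/\|u\|^2_{L^2(\pi)}$, which is the standard equivalence between LSI and the entropy form $\mathrm{Ent}_\pi(u^2)\le \frac{2}{\lambda}\int|\nabla u|^2\pi$. This gives
\[
\int u_k^2\log\frac{u_k^2}{\|u_k\|^2_{L^2(\pi)}}\,\pi\,\ud x\le \frac{2}{\lambda}\int|\nabla u_k|^2\pi\,\ud x\le \frac{2A}{\lambda}.
\]
Next use the entropy (Young/Gibbs) inequality $\int g h\,\pi\le \mathrm{Ent}_\pi(g)+\int g\,\pi\cdot\log\int e^{h}\pi$ with $g=u_k^2$ and $h=c|x|^2\mathbf 1_{|x|>R}$. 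Because LSI gives Gaussian integrability $\int e^{C|x|^2}\pi<\infty$ (quoted above after Lemma \ref{lem:LSI-single}), choosing $c<C$ makes $\log\int e^{h}\pi\to0$ as $R\to\infty$ by dominated convergence. Hence
\[
cR^2\int_{|x|>R}u_k^2\pi\le \frac{2A}{\lambda}+A\,o(1),
\]
so $\sup_k\int_{|x|>R}u_k^2\pi\lesssim A/(\lambda R^2)\to0$. As a simpler alternative, the weaker function $h=t\mathbf 1_{|x|>R}$ with $t$ large also works and only needs $\pi(|x|>R)\to0$.

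\emph{Step 2: local compactness.} On each ball $B_R$, $\pi$ is smooth and positive, so it is bounded above and below (we assume $U_0$ and $V$ smooth, hence locally bounded). Therefore $u_k$ is bounded in the unweighted space $H^1(B_R)$, and Rellich--Kondrachov gives a subsequence converging in $L^2(B_R)$, hence in $L^2(B_R,\pi)$. A diagonal argument over $R=1,2,\dots$ produces one subsequence converging in $L^2(B_R,\pi)$ for every $R$.

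\emph{Step 3: conclusion.} Split $\|u_j-u_k\|^2_{L^2(\pi)}$ into the part on $B_R$ and the part off $B_R$. By Step 1 the tail part is at most $4\sup_k\int_{|x|>R}u_k^2\pi$, which is small for large $R$. With $R$ fixed, the part on $B_R$ is small for large $j,k$ by Step 2. So the subsequence is Cauchy in $L^2(\pi)$, and the set is precompact. Weak lower semicontinuity of both constraints handles closedness, or one can read ``compact'' as relatively compact.

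The main obstacle is Step 1: the correct use of the entropy/duality inequality to convert LSI into a uniform tail estimate, including the normalization by $\|u_k\|_{L^2(\pi)}$. The remaining steps are routine.
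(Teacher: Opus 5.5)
Your proposal is correct and follows essentially the paper's route: an LSI-based tail bound using Gaussian integrability (your Gibbs/Donsker--Varadhan inequality with weight $c|x|^2\mathbf{1}_{|x|>R}$ is the integrated form of the paper's pointwise Young inequality $ab\le a(\log a-1)+e^b$ with $b=C(1+|x|^2)$), followed by Rellich compactness on balls and a diagonal argument. Your side remark that $h=t\mathbf{1}_{|x|>R}$ already suffices is a small genuine improvement, since it removes the need for the Gaussian moment bound. You are also right that with strict inequalities the set is only relatively compact.
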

  \begin{proof}
      First, it is obvious that
   $$\int |u|^2 \pi \ud x \leq \int |u|^2(1+|x|^2)^{\frac12}\pi \ud x,$$
   which implies $L^2(\pi(1+|x|^2)^{\frac12}) \hookrightarrow L^2(\pi).$

   Second,  
   we claim $\int |u|^2(1+x^2) \pi \ud x$ is also bounded. Indeed, by  \cite[eq.(21.3)]{villani2009optimal}, we have that
   \begin{equation}\label{LSI-t1}
   \int u^2 \log (u^2) \pi \ud x \leq \frac{2}{\lambda} \int  |\nabla u|^2 \pi \ud x 
   + \left(\int  u^2 \pi \ud x\right)\log \left(\int  u^2 \pi \ud x\right) < \8.
   \end{equation}
   Then by inequality $a\cdot b \leq a(\log a-1) + e^b, a>0, b\in \bR$, we derive for any $C$ that
   \begin{equation} \label{est_x2}
   \begin{aligned}
       \int |u|^2(1+|x|^2) \pi \ud x
       &= \frac{1}{C}\int u^2 C(1+|x|^2)\pi \ud x \\
       &\leq \frac{1}{C}\int u^2 \Big(\log(u^2) -1\Big)\pi \ud x + \frac{1}{C}\int e^{C(1+|x|^2)}\pi \ud x\\
       &\leq \frac{2}{C\lambda}\left[\int |\nabla u|^2\pi \ud x+Z\log Z\right]
       +\frac{1}{C}\int e^{C(1+|x|^2)}\pi \ud x,
   \end{aligned}
   \end{equation}
   where in the second inequality we have used \eqref{LSI-t1}.
   We conclude by the comment before this lemma that 
   as $\pi$ satisfies LSI, there is a $C$ such that
   $\displaystyle \int e^{C(1+|x|^2)}\pi \ud x<\8$. 

   Third, from the boundedness of $\displaystyle \int |u|^2(1+|x|^2) \pi \ud x$, we obtain
   \begin{align*}
       \int_{B_R^c} |u|^2(1+|x|^2)^{\frac12} \pi \ud x \leq \frac{1}{\sqrt{1+R^2}}\int_{B_R^c} |u|^2(1+|x|^2)  \pi \ud x \leq \frac{C(\|u\|)}{\sqrt{1+R^2}},
   \end{align*}
   which goes to zero as $R\to+\8.$
   On the other hand, in $B_R$, one can apply the usual compact embedding in compact domain $H^1(\pi)\equiv H^1\hookrightarrow\hookrightarrow L^2 \equiv L^2((1+x^2)^{\frac12}\pi)$
   to obtain that for any fixed $R$, there exists a subsequence $u_{R,i}$ strongly converges to some $u^*$ in $L^2((1+x^2)^{\frac12}\pi, B_R)$.
   Therefore, by diagonal argument, there exists a subsequence (without relabel) $u_{i}$ converges almost everywhere to some $u^*$. By Fatou's Lemma, we have 
   $\displaystyle \int {u^*}^2(1+|x|^2)^{\frac12}\pi\ud x\leq \liminf_i\int u_i^2(1+|x|^2)^{\frac12}\pi\ud x < \infty$ and hence
   $u^*\in L^2((1+|x|^2)^{\frac12}\pi).$ From this, we can deduce that $u_i\rightarrow u^*$ in 
   $L^2((1+|x|^2)^{\frac12}\pi)$.
   This, together with the first step, implies
   $$H^1(\pi) \hookrightarrow\hookrightarrow L^2((1+|x|^2)^{\frac12}\pi) \hookrightarrow L^2(\pi).$$
  \end{proof}

\subsection{A priori Uniform estimates}
In order to study the asymptotic behavior as $\eps \to 0$, we would first establish some a-priori estimates for our $\eps$-gradient flow system
\eqref{epsFP} (or \eqref{epsGF}). These would then 
give us some space-time compactness and convergence properties. These estimates are
variational in nature. They are more or less standard, but for completeness, we provide sufficient details, in particular, to demonstrate their uniformity in $\eps$.

Before presenting our analysis, we give some remarks about the well-posedness of
equation \eqref{mainFP}. Given that its solution represents probability density, the most natural space is $L^1$ for probability measures. Well-posedness in this space can be established using Green's function. We refer to \cite[Theorem 2.1, Appendix A]{Tamura87} for their precise estimates. 
For this part, we simply treat $\eps$ as a fixed parameter. In order to investigate
long time behavior, we would further impose finite entropy (with respect to the
underlying invariant measure $\pie$) and finite second moment. This will be made clear
in Section \ref{longtime}. However, the technique to analyze the limit $\eps\to0$ is exclusively \emph{variational}. The most natural space for this is \mbox{(weighted-)}$L^2$ and 
hence the forthcoming computations. We believe that with more advanced techniques, we
can still handle initial data with weaker integrability but this would inevitably involve
some initial time transient behavior. We opt not to digress
in this direction as it is not the main purpose of our work. 
See also Remark \ref{rem:highreg}. With that, we now proceed to our $L^2$-analysis.

We notice that singular coefficients can appear whenever we differentiate terms involving the fast variable $\frac{x}{\eps}$.   On the other hand, the function $\displaystyle f^\eps_t := \frac{\rho^\eps_t}{\pi_\eps}$ is expected to have better regularities as its evolutionary equation is of variational form. To this end, 
we re-write \eqref{epsFP} as
\begin{equation}
\pt_t \rhoe = \nabla \cdot \bbs{\pi_\eps B_\eps^{-1} \nabla \frac{\rhoe}{\pi_\eps} + \rhoe B_\eps^{-1}\nabla W*\rhoe   },
\end{equation}
which, in terms of $f^\eps_t$ becomes
\begin{align}\label{backward}
\pt_t f^\eps_t = \frac{1}{\pi_\eps} \nabla \cdot \bbs{   \pi_\eps B_\eps^{-1}  \nabla f^{\eps}_t + f^\eps_t   \pi_\eps B_\eps^{-1}    \nabla W*(f^\eps_t\pie)  }.
\end{align}
Note that the operator $L_\eps(\cdot):= \frac{1}{\pi_\eps} \nabla \cdot \bbs{ \pi_\eps B_\eps^{-1} \nabla (\cdot)}$ is self-adjoint and negative definite with respect to
$L^2(\pi_\eps)$. More precisely,
\begin{eqnarray}
& \la L_\eps u, v \ra_{\pi_\eps} = \la u, L_\eps v \ra_{\pi_\eps}, \quad \forall u, v \in L^2(\pi_\eps),\\
& \text{and} \quad
\la L_\eps u, u \ra_{\pi_\eps} = -\int \la\nabla u, \pie B_\eps^{-1}\nabla u\ra\ud x < 0,
\quad\text{for $u\in L^2(\pie), \neq 0$.}
\end{eqnarray}
In the above and what follows, we will use $\la \cdot, \cdot \ra_{\pie}$ to denote
the $\pi_\eps$-weighted $L^2$-inner product,
$\displaystyle \la \cdot, \cdot \ra_{\pie} 
:= \int u(x)v(x)\pie(x)\ud x$ and $\|\cdot\|_{\pie}$ as the $\pie$-weighted $L^2$-norm. Thus we can resort to variational techniques to analyze equation \eqref{backward}.
On the other hand, the nonlinear term in \eqref{backward} is in convolution form and thus can be controlled via  Young's convolution inequality. 
 
With that, we present our result on the uniform estimates for $f^\eps_t$.
We recall again the assumptions in Section \ref{sec:assumptions}, in particular
the boundedness and uniform ellipticity of the matrix $B$ \eqref{B.est}, 
the form of $\pie$ \eqref{pie}, 
and the bounded and Lipschitz properties of the interacting kernel $W$ \eqref{W.est}.
We will also use the convention about the symbol $C$ for some generic constant. 
In fact, in this section, 
$\alpha_B, \beta_V, \gamma_W$ and $\delta_V$ are treated as some arbitrary constants.
Their precise values do not play an important role here.

\begin{lem}\label{lem_reg}
Let $f^\eps_0$ be the initial data for \eqref{backward}. We define,
\begin{eqnarray}
A_0 & := & \sup_{\eps>0}\int (f_0^\eps)^2\pi_\eps\ud x,\label{A0}\\
B_0 & := & \sup_{\eps>0}\int 
\langle \nabla f_0^\eps, B_\eps^{-1}\pi_\eps\nabla f_0^\eps\rangle\ud x.
\label{B0}
\end{eqnarray}
Let $0 < T < \infty$. Then we have the following statements.
\begin{enumerate}

\item If $A_0 < \infty$, then 
$f^\eps \in L^\infty((0,T);L^2(\pie))\bigcap L^2((0,T);H^1(\pie))$. 
In particular, for all $0 < t < T$, we have the following estimates, 
\begin{equation}\label{L2SpaceEst}
\|f_t^\eps\|^2_{\pi_\eps} \leq A_0 e^{Ct}, \quad \int_0^t \int \langle \nabla f_s^\eps, B_\eps^{-1}\pi_\eps \nabla f_s^\eps
\rangle \ud x \ud s \leq A_0(1+e^{Ct}).
\end{equation}

\item If $B_0 < \infty$ (which by $\displaystyle \int f_0\pie\ud x = 1$ and Poincare inequality implies 
$A_0 < \infty$), then
$$
f^\eps \in L^\infty((0,T);H^1(\pie))\bigcap H^1((0,T);L^2(\pie)).
$$
In particular, for all $0 < t < T$, we have the following estimates
\begin{equation}\label{f_time}
\int \langle \nabla f_t^\eps, B_\eps^{-1}\pi_\eps \nabla f_t^\eps
\rangle \ud x \leq C_t, \quad 
\int_0^t\int (\partial_s f_s^\eps)^2\pi_\eps \ud x \ud s\leq C_t.
\end{equation}
In addition, $f_t^\eps$ is $\frac12$-H\"{o}lder continuous in time with the following
estimate: 
\begin{equation}\label{f_Holder_time}
\|f_t^\eps-f^\eps_s\|_{\pi_\eps}
\leq C_T |t-s|^\frac12\quad\text{for any $0\leq s \leq t \leq T$.}
\end{equation}

\end{enumerate}
\end{lem}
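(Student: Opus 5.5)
We would prove both parts by weighted energy estimates on \eqref{backward}, using that $L_\eps$ is self-adjoint in $L^2(\pie)$ and that the nonlinear drift $b^\eps_t:=B_\eps^{-1}\nabla W*(f^\eps_t\pie)$ is uniformly bounded. Indeed, mass conservation gives $\int f^\eps_t\pie\ud x=\int\rhoe\ud x=1$, so $|\nabla W*\rhoe|\le\delta_W$ and $|b^\eps_t|\le\alpha_B\delta_W$, uniformly in $\eps$ and $t$. All constants will depend only on $\alpha_B,\delta_W$ (and $\lambda$ where Poincar\'e is used), never on $\eps$. We would carry out the computations for smooth solutions (for fixed $\eps$ the equation is uniformly parabolic with smooth coefficients) and pass to the general case by approximation.

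For part (1), we test \eqref{backward} with $f^\eps_t\pie$ and integrate by parts:
\[
\frac12\frac{d}{dt}\|f^\eps_t\|_{\pie}^2 = -\int\la\nabla f^\eps_t, B_\eps^{-1}\pie\nabla f^\eps_t\ra\ud x - \int f^\eps_t\la\nabla f^\eps_t, B_\eps^{-1}\nabla W*\rhoe\ra\pie\ud x .
\]
We bound the last term by Cauchy--Schwarz in the $B_\eps^{-1}\pie$-weighted inner product:
\[
\left(\int\la\nabla f, B_\eps^{-1}\pie\nabla f\ra\right)^{1/2}\left(\int f^2\la\nabla W*\rhoe,B_\eps^{-1}\nabla W*\rhoe\ra\pie\right)^{1/2}.
\]
Young's inequality then gives $\le\frac12\int\la\nabla f,B_\eps^{-1}\pie\nabla f\ra+\frac{\alpha_B\delta_W^2}{2}\|f\|^2_{\pie}$. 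Absorbing the gradient term, we obtain
\[
\frac{d}{dt}\|f\|^2_{\pie}+\int\la\nabla f,B_\eps^{-1}\pie\nabla f\ra\le C\|f\|^2_{\pie}.
\]
Gr\"onwall yields the first bound in \eqref{L2SpaceEst}, and integrating in time yields the second.

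For part (2), we test \eqref{backward} with $\pt_tf^\eps_t\,\pie$. Self-adjointness gives $\la L_\eps f,\pt_t f\ra_{\pie}=-\frac12\frac{d}{dt}\int\la\nabla f,B_\eps^{-1}\pie\nabla f\ra$. The nonlinear term is $\int\nabla\cdot(f\pie b^\eps)\pt_t f\ud x=-\int f\la b^\eps,\nabla\pt_t f\ra\pie$, and we want to avoid putting a derivative on $\pt_t f$ or on the oscillating coefficients. To do so, we expand $\frac1{\pie}\nabla\cdot(f\pie B_\eps^{-1}\nabla W*\rhoe)$. Since $\nabla\cdot(\pie B_\eps^{-1}\cdot)$ produces $\eps^{-1}$ factors, we would instead pair the term differently: integrate by parts onto $\nabla f$ and move the time derivative onto the drift. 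This gives
\[
-\frac{d}{dt}\int f\la b^\eps,\nabla f\ra\pie+\int\pt_t f\la b^\eps,\nabla f\ra\pie+\int f\la\pt_t b^\eps,\nabla f\ra\pie .
\]
Here $|\pt_t b^\eps|\le\alpha_B\|\nabla W\|_\infty\int|\pt_t\rhoe|\ud x\le C\|\pt_t f\|_{\pie}$, by Cauchy--Schwarz with $\int\pie=1$.

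The step we expect to be the main obstacle is controlling these terms uniformly in $\eps$. Since $\int f\pie=1$, we have $\|f\|_{L^1(\pie)}=1$, but $f\nabla f$ is only controlled in $L^1$ by $\|f\|\|\nabla f\|$. We would therefore combine Young's inequality,
\[
\int\pt_t f\la b,\nabla f\ra\pie\le\tfrac14\|\pt_t f\|^2+C\|\nabla f\|^2, \qquad \int f\la\pt_t b,\nabla f\ra\pie\le C\|\pt_t f\|\,\|f\|\,\|\nabla f\|\le\tfrac14\|\pt_t f\|^2+C\|f\|^2\|\nabla f\|^2,
\]
with the uniform $L^\infty_tL^2$ bound on $f$ from part (1). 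The boundary term $\int f\la b,\nabla f\ra\pie$ is bounded by $\frac14\int\la\nabla f,B_\eps^{-1}\pie\nabla f\ra+C\|f\|^2$. Collecting terms, $G(t):=\int\la\nabla f,B_\eps^{-1}\pie\nabla f\ra$ satisfies, after integration in time,
\[
G(t)+\int_0^t\|\pt_s f\|^2\le C\Big(B_0+A_0e^{Ct}+\int_0^t(1+\|f\|^2)G\Big).
\]
Gr\"onwall together with part (1) then gives \eqref{f_time}. Finally, \eqref{f_Holder_time} follows from $\|f_t-f_s\|_{\pie}\le\int_s^t\|\pt_\tau f\|\le|t-s|^{1/2}\big(\int_0^T\|\pt_\tau f\|^2\big)^{1/2}$.
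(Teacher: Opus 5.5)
Your proposal is correct and follows essentially the same route as the paper. For part (1) you use the $L^2(\pie)$ energy estimate with Gr\"onwall. For part (2) you differentiate the weighted Dirichlet energy, integrate the nonlinear term by parts and shift the time derivative onto the drift. This produces exactly the paper's boundary term $I_{\text{b.c.}}$ and the terms $I_1$, $I_2$, each bounded the same way via $\|\nabla W*(\pt_t f\pie)\|_{L^\infty}\le\delta_W\|\pt_t f\|_{\pie}$ and the part (1) bound. The H\"older estimate via Cauchy--Schwarz in time is also the same as in the paper.
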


\begin{proof} 
  As a preliminary, note that
\[
\pt_t f_t^\eps = B_\eps^{-1}:D^2f_t^\eps + 
\frac{1}{\pie}\big\langle\nabla(B_\eps^{-1}\pie),\nabla f_t^\eps\big\rangle
+\langle\nabla f_t^\eps, B_\eps^{-1}\nabla W\ast(f_t^\eps\pie)\rangle
+\frac{1}{\pie}f_t^\eps\nabla\Big(\pie B^{-1}_\eps\nabla W\ast(f_t^\eps\pie)\Big)
.
\]
Hence by maximum principle, we have $f^\eps_t > 0$ for $t > 0$. Furthermore, we have for all $t>0$ that
$\displaystyle \|\rho_t^\eps\|_{L^1} = \int \rho_t^\eps\ud x = 1$ so that 
$\displaystyle \| f^\eps_t\pie\|_{L^1} = \int f_t^\eps\pie\ud x = 1$. This leads to that
\begin{align}\label{conv1}
\|\nabla W * (f_t^\eps \pi_\eps)\|_{L^\8} \leq \|\nabla W\|_{L^\8}\|f_t^\eps \pi_\eps\|_{L^1} \leq \delta_W.
\end{align}

With the above, first, multiply both sides of \eqref{backward} by $f^\eps_t \pie$ and compute
\begin{eqnarray}\label{f_p1}
\frac{\ud}{\ud t}\frac12||f_t^\eps||^2_{\pi_\eps}
&=& \int f_t^\eps\partial_t f_t^\eps \pi_\eps\ud x
\\
&=& -\int \langle \nabla f_t^\eps, B_\eps^{-1}\pi_\eps \nabla f_t^\eps
\rangle \ud x + \int \la \nabla f_t^\eps, f_t^\eps B_\eps^{-1}\pi_\eps  \nabla W*(f_t^\eps \pi_\eps) \ra \ud x. \nonumber
\end{eqnarray}
Using \eqref{conv1}, the above becomes  
\begin{align}\label{f_p1_tem}
\frac{\ud}{dt}\frac12\|f_t^\eps\|^2_{\pi_\eps}\lesssim 
-\frac12\int \langle \nabla f_t^\eps, B_\eps^{-1}\pi_\eps \nabla f_t^\eps
\rangle \ud x  + \frac12\delta_W\|f^\eps_t\|_{\pie}^2,
\end{align} 
Then Gr\"onwall's inequality gives
\begin{equation}
\|f_t^\eps\|^2_{\pi_\eps} \leq A_0 e^{\delta_Wt} \quad \text{for $0\leq t\leq T$}.
\end{equation}
Combining this with the time integration of \eqref{f_p1_tem},  we also have
\begin{equation}
\int_0^t \int \langle \nabla f_s^\eps, B_\eps^{-1}\pi_\eps \nabla f_s^\eps
\rangle \ud x \ud s \lesssim A_0(1+ e^{\delta_Wt}) \quad \text{for $0\leq t\leq T$}.
\end{equation}

Third, for higher regularity, 
 we compute
\begin{eqnarray}\label{f_p2}
&&\frac{\ud}{\ud t}\frac12\int
\langle \nabla f_t^\eps, B_\eps^{-1}\pi_\eps \nabla f_t^\eps\rangle
\ud x\\
&=& \int
\langle \nabla \partial_tf_t^\eps, B_\eps^{-1}\pi_\eps \nabla f_t^\eps\rangle
\ud x
= -\int
\partial_tf_t^\eps\nabla\cdot\big(B_\eps^{-1}\pi_\eps \nabla f_t^\eps\big)
\ud x\\
&=& -\int (\partial_tf_t^\eps)^2\pi_\eps \ud x +\int \pt_t f_t^\eps \nabla \cdot\bbs{  f^\eps_t B_\eps^{-1}\pi_\eps \nabla W*(f_t^\eps \pi_\eps) }  \ud x\quad\text{(by \eqref{backward})}\nonumber\\
&=&
-\int (\partial_tf_t^\eps)^2\pi_\eps \ud x 
-\int \pt_t \nabla f_t^\eps \cdot\bbs{  f^\eps_t B_\eps^{-1}\pi_\eps \nabla W*(f_t^\eps \pi_\eps) }  \ud x\nonumber\\
&=&
-\int (\partial_tf_t^\eps)^2\pi_\eps \ud x 
-\frac{d}{dt}\int \nabla f_t^\eps \cdot\bbs{  f^\eps_t B_\eps^{-1}\pi_\eps \nabla W*(f_t^\eps \pi_\eps) }  \ud x\nonumber\\
&& 
+\int \nabla f_t^\eps \cdot\bbs{  \pt_t f^\eps_t B_\eps^{-1}\pi_\eps \nabla W*(f_t^\eps \pi_\eps) }  \ud x
+\int \nabla f_t^\eps \cdot\bbs{  f^\eps_t B_\eps^{-1}\pi_\eps \nabla W*(\pt_t f_t^\eps \pi_\eps)}  \ud x.
\end{eqnarray}
Integrating in time from $0$ to $t$ and integrating by parts, 
we have
\begin{eqnarray}
\frac12\int
\langle \nabla f_t^\eps, B_\eps^{-1}\pi_\eps \nabla f_t^\eps\rangle
\ud x
&=&
\frac12\int
\langle \nabla f_0^\eps, B_\eps^{-1}\pi_\eps \nabla f_0^\eps\rangle
\ud x -\int_0^t\int (\partial_sf_s^\eps)^2\pi_\eps \ud x\ud s\nonumber\\
&&
+ I_{\text{b.c.}} + I_1 + I_2,
\label{H1.est.0}
\end{eqnarray}
where 
\begin{eqnarray}
I_{\text{b.c.}} &:=&
\int \nabla f_0^\eps \cdot \bbs{  f^\eps_0 B_\eps^{-1}\pi_\eps \nabla W*(f_0^\eps \pi_\eps) } \ud x -  \int    \nabla f_t^\eps \cdot \bbs{  f^\eps_t B_\eps^{-1}\pi_\eps \nabla W*(f_t^\eps \pi_\eps) } \ud x,\\
I_1 &:=&
\int_0^t \int \nabla f_s^\eps \cdot  \Big(\pt_s f^\eps_s B_\eps^{-1}\pi_\eps \nabla W*(f_s^\eps \pi_\eps)\Big) \ud x \ud s, \\
I_2 &:=&
\int_0^t \int   \nabla f_s^\eps \cdot  \Big(f^\eps_s B_\eps^{-1}\pi_\eps \nabla W*(\pt_s f_s^\eps \pi_\eps)\Big)\ud x \ud s.
\end{eqnarray}
The last three terms are analyzed as follows.
\begin{itemize}
\item
For $I_{\text{b.c.}}$ at time $s=0, t$, we use \eqref{conv1} and Young's  inequality to obtain
\begin{align}\label{ft_ibc}
I_{\text{b.c.}} \lesssim \delta_W(A_0 + B_0) + \frac{1}{4} \int
\langle \nabla f_t^\eps, B_\eps^{-1}\pi_\eps \nabla f_t^\eps\rangle
\ud x  + \delta_W \|f_t^\eps\|^2_{\pi_\eps}.
\end{align}

\item 
For $I_1$, we use \eqref{conv1} and Young's  inequality to obtain
\begin{align}\label{ft_i1}
I_1\lesssim 
\frac14 \int_0^t\int (\partial_s f_s^\eps)^2\pi_\eps \ud x
+ \delta_W \int_0^t \int \langle \nabla f_s^\eps, B_\eps^{-1}\pi_\eps \nabla f_s^\eps
\rangle \ud x \ud s. 
\end{align}

\item 
For $I_2$, we use  Young's convolution inequality to obtain
\begin{align}\label{conv2}
\|\nabla W * (\pt_s f_s^\eps \pi_\eps)\|_{L^\8_x} \leq \delta_W\int |\pt_s f_s^\eps| \pi_\eps \ud x \leq    \delta_W  \bbs{\int |\pt_s f_s^\eps|^2 \pi_\eps \ud x}^\frac12 \quad \text{for $0\leq s \leq t$},
\end{align}
where the last inequality is implied by the Cauchy–Schwarz inequality with $\| \pi_\eps\|_{L^1}=1$. Then we have
\begin{align*}
I_2 \lesssim &\delta_W\int_0^t \Big[\bbs{\int |\pt_s f_s^\eps|^2 \pi_\eps \ud x}^\frac12 \int \left|f^\eps_s \nabla f_s^\eps\right|\pi_\eps \ud x   \Big]  \ud s\\
\lesssim & \delta_W    \int_0^t \Big[    \bbs{\int |\pt_s f_s^\eps|^2 \pi_\eps \ud x}^\frac12  \bbs{\int  \langle \nabla f_s^\eps, B_\eps^{-1}\pi_\eps \nabla f_s^\eps
\rangle \ud x}^{\frac12}  \bbs{\int |f^\eps_s  |^2  \pi_\eps \ud x}^{\frac12}   \Big]  \ud s.
\end{align*}
By \eqref{L2SpaceEst} and Young's inequality, the above estimate for $I_2$ becomes
\begin{equation}\label{ft_i2}
I_2 \leq 
\frac14 \int_0^t\int (\partial_s f_s^\eps)^2\pi_\eps \ud x
+ C_t\int_0^t \int \langle \nabla f_s^\eps, B_\eps^{-1}\pi_\eps \nabla f_s^\eps
\rangle \ud x \ud s.
\end{equation}
\end{itemize}

Combining the above estimates for $I_{\text{b.c}}, I_1, I_2$ in \eqref{ft_ibc}, \eqref{ft_i1}, \eqref{ft_i2}, the time integration of \eqref{f_p2} leads to
\begin{equation}
\frac{1}{4} \int \langle \nabla f_t^\eps, B_\eps^{-1}\pi_\eps \nabla f_t^\eps
\rangle \ud x + \frac12 \int_0^t\int (\partial_s f_s^\eps)^2\pi_\eps \ud x \ud s
\leq  C_t + C_t\int_0^t \int \langle \nabla f_s^\eps, B_\eps^{-1}\pi_\eps \nabla f_s^\eps
\rangle \ud x \ud s . 
\end{equation}
This, together with Gr\"onwall's inequality, yields
\begin{equation}
\int \langle \nabla f_t^\eps, B_\eps^{-1}\pi_\eps \nabla f_t^\eps
\rangle \ud x,
\,\,\,\text{and}\,\,\,
\int_0^t\int (\partial_s f_s^\eps)^2\pi_\eps \ud x \ud s
\,\leq\,
C_t \quad \text{for $0\leq t\leq T$.}
\end{equation}

Statement \eqref{f_Holder_time} follows directly from \eqref{f_time} by means of Cauchy-Schwartz:
\begin{eqnarray*}
\int(f_t^\eps-f_s^\eps)^2\pie\ud x
=
\int\left(\int_s^t\partial_r f_r^\eps\ud r\right)^2\pie\ud x
\leq 
\int(t-s)\int_s^t(\partial_r f_r^\eps)^2\ud r\,\pie\ud x
\leq C_T(t-s).
\end{eqnarray*}
\end{proof}

For our application, we will also need some regularity
estimates for the time derivative of $f^\eps$. Define
$h_t^\eps := \partial_t f^\eps_t$. Then it satisfies  
\begin{equation}\label{TimeDerEqn}
\pt_t h^\eps_t  
 = \frac{1}{\pi_\eps} \nabla \cdot \Big(   \pi_\eps B_\eps^{-1}  \nabla h^{\eps}_t + h^\eps_t   \pi_\eps B_\eps^{-1}    \nabla W*(f^\eps_t\pie)  + f^\eps_t   \pi_\eps B_\eps^{-1}    \nabla W*(h^\eps_t\pie)  \Big).
\end{equation}
Exactly the same proofs as the estimates for $I_1$, $I_2$ in   \eqref{ft_i1}, \eqref{ft_i2} from the previous lemma
give  the following result.

\begin{lem}\label{lem_timeder_reg}
Let $h^\eps_0 = \partial_t f_t^\eps|_{t=0}$ be the initial data for \eqref{TimeDerEqn}. We define,
\begin{eqnarray}
C_0 & := & \sup_{\eps>0}\int (h_0^\eps)^2\pi_\eps\ud x
\,\,\left(=
\sup_{\eps>0}\int (\pt_t f_0^\eps)^2\pi_\eps\ud x
\right).\label{C0}
\end{eqnarray}
Let $0 < T < \infty$ be given. If $B_0,\,  C_0 < \infty$, then
$h^\eps \in L^\infty((0,T);L^2(\pie))\bigcap L^2((0,T);H^1(\pie))$.
In particular, for all $0 < t < T$, the following estimates hold,
\begin{equation}\label{L2SpaceEst_timeder}
\|h_t^\eps\|^2_{\pi_\eps} \leq C_t, \quad \int_0^t \int \langle \nabla h_s^\eps, B_\eps^{-1}\pi_\eps \nabla h_s^\eps
\rangle \ud x \ud s \leq C_t.
\end{equation}
Similarly, if 
\begin{eqnarray}
D_0 & := & \sup_{\eps>0}\int \langle \nabla h_0^\eps, \pie B_\eps^{-1}\nabla h_0\rangle \ud x
\,\,\left(=
\sup_{\eps>0}\int \langle \nabla \pt_t f_0^\eps, \pie B_\eps^{-1}\nabla \pt_t f_0\rangle \ud x
\right)<+\8,\label{D0}
\end{eqnarray}
then $h^\eps \in L^\infty((0,T);H^1(\pie))\bigcap H^1((0,T);L^2(\pie))$.
In particular, for all $0 < t < T$, we have the following estimates,
\begin{equation}\label{H1SpaceTimeEst_timeder}
\|\nabla h_t^\eps\|^2_{\pie},\quad
\int_0^t \int(\partial_s h_s^\eps)^2\pi_\eps\ud x\ud s
\leq C_t
\end{equation}
and
\begin{equation}\label{HolderTimeEst_timeder}
\|h_t^\eps-h_s^\eps\|_{\pi_\eps} \leq C_T|t-s|^\frac12,\quad\text{for $0<s<t<T$.}
\end{equation}
\end{lem}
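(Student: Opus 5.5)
The plan is to rerun the energy method of Lemma \ref{lem_reg}, now applied to the linearized equation \eqref{TimeDerEqn} for $h^\eps_t=\pt_t f^\eps_t$. The estimates already obtained for $f^\eps$ (from Lemma \ref{lem_reg}(2), using $B_0<\infty$) are used as known coefficients. The only structural difference from \eqref{backward} is the additional term $f^\eps_t\pie B_\eps^{-1}\nabla W*(h^\eps_t\pie)$. Since $h$ need not have a sign, we cannot use $\int h\pie=1$; however $\int h^\eps_t\pie\ud x=0$ by mass conservation. The replacement for \eqref{conv1} is the bound \eqref{conv2}:
\[
\|\nabla W*(h^\eps_t\pie)\|_{L^\8}\le \delta_W\|h^\eps_t\|_{\pie}.
\]

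\textbf{$L^2$ estimate.} First multiply \eqref{TimeDerEqn} by $h^\eps_t\pie$ and integrate by parts. This gives
\[
\frac{\ud}{\ud t}\frac12\|h_t^\eps\|_{\pie}^2=-\int\la\nabla h,B_\eps^{-1}\pie\nabla h\ra-\int\la\nabla h,hB_\eps^{-1}\pie\nabla W*(f\pie)\ra-\int\la\nabla h,fB_\eps^{-1}\pie\nabla W*(h\pie)\ra .
\]
The second term is handled as $I_1$: it is bounded by $\frac14\int\la\nabla h,B_\eps^{-1}\pie\nabla h\ra+C\delta_W^2\|h\|^2_{\pie}$. The third term is handled as $I_2$: by Cauchy--Schwarz and the bound above it is at most $\delta_W\|h\|_{\pie}\|\nabla h\|_{\pie}\|f\|_{\pie}$, and $\|f_t^\eps\|_{\pie}\le C_t$ by \eqref{L2SpaceEst}. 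Absorbing $\frac14$ of the gradient each time and applying Gr\"onwall gives \eqref{L2SpaceEst_timeder}, with constants depending on $C_0$, $A_0$ and $B_0$.

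\textbf{$H^1$ estimate.} Assuming $D_0<\infty$, differentiate $\frac12\int\la\nabla h,B_\eps^{-1}\pie\nabla h\ra$ in time and substitute \eqref{TimeDerEqn}, as in \eqref{f_p2}. This produces $-\int(\pt_t h)^2\pie$ plus cross terms of the form $\int \pt_t h\,\nabla\cdot(\cdots)$. We integrate by parts in space and then in time, exactly as in \eqref{H1.est.0}, to avoid second derivatives. The resulting boundary terms are controlled like $I_{\text{b.c.}}$ using Young's inequality, \eqref{L2SpaceEst_timeder} and \eqref{f_time}.

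The interior terms fall into two groups:
\begin{itemize}
\item Terms with $\pt_t h$ times one convolution factor are treated like $I_1$ and $I_2$: absorb $\frac14\int(\pt_s h)^2\pie$, and bound the remainder by $C_t\int_0^t\|\nabla h\|^2_{\pie}$ or by known quantities.
\item Terms in which the time derivative falls on $f$ inside the convolution produce $\pt_t f=h$. For example, $\nabla W*(h\pie)$ appears multiplied by $\nabla h$ and by $h$. These are bounded by $\delta_W\|h\|_{\pie}$ times $L^2(\pie)$-norms that are already finite.
\end{itemize}
Gr\"onwall then yields \eqref{H1SpaceTimeEst_timeder}. The H\"older bound \eqref{HolderTimeEst_timeder} follows from Cauchy--Schwarz in time, exactly as for \eqref{f_Holder_time}.

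\textbf{Main obstacle.} The delicate terms are those that are trilinear in derivatives, such as
\[
\int\nabla h\cdot\big(h\,B_\eps^{-1}\pie\,\nabla W*(h\pie)\big)
\quad\text{or}\quad
\int\nabla f\cdot\big(h\,B_\eps^{-1}\pie\,\nabla W*(\pt_t h\,\pie)\big).
\]
Plain $L^2$ bounds here give a product of three norms rather than a quadratic one. I would handle them by putting the convolution factor in $L^\8$ via \eqref{conv2}, which turns it into a scalar $\delta_W\|\cdot\|_{\pie}$. The remaining integrand is then bilinear and weighted by $\pie$, so Cauchy--Schwarz applies. The prefactors are uniformly bounded quantities: $\|h\|_{\pie}\le C_t$ from the $L^2$ estimate, and $\|\nabla f\|_{\pie}\le C_t$ from \eqref{f_time}. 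Each such term is therefore at most $\frac14\int(\pt_s h)^2\pie+C_t\int_0^t\|\nabla h\|_{\pie}^2$. The uniformity in $\eps$ holds because only $\alpha_B$, $\delta_W$ and the initial quantities $A_0$, $B_0$, $C_0$, $D_0$ enter the constants.
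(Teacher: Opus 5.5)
Your proposal is correct and follows the paper's route. The paper simply asserts that the energy estimates of Lemma \ref{lem_reg}, in particular the treatment of $I_1$ and $I_2$ in \eqref{ft_i1}--\eqref{ft_i2}, carry over verbatim to \eqref{TimeDerEqn}; you carry this out, using \eqref{conv2} in place of \eqref{conv1} for the new term $f^\eps_t\pie B_\eps^{-1}\nabla W*(h^\eps_t\pie)$. (One minor remark: the $\nabla f^\eps$ term in your ``main obstacle'' paragraph does not actually arise after the space--time integration by parts, since only $\nabla h^\eps$ gets paired with $\nabla W*(f^\eps\pie)$, $\nabla W*(h^\eps\pie)$ and $\nabla W*(\pt_t h^\eps\pie)$, but your bound would handle it anyway.)
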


We make the following same remark as \cite[Remark 3.6]{GaoYip}.
\begin{rem}\label{rem:highreg}
Note that currently our approach does require a high
degree of regularity for the initial data. Its existence and 
construction would require the characterization of precise oscillations of the 
solution which in principle can be done by considering second and higher order
cell problems.
However, we believe this requirement can be much relaxed by means of parabolic 
regularity. For example, if $A_0 < \infty$, then $f_t^\eps\in H^1(\bR^n)$ for 
some $t>0$ and if $B_0 < \infty$, then  $\partial_t f^\eps_t\in L^2(\bR^n)$
for some $t>0$. This can be iterated due to the variational structure of 
equation \eqref{backward}.
Alternatively, we can also opt to utilize some technical results similar to
\cite[p.14, steps (a-c)]{jordan1998variational} and
\cite[Proposition 4.4]{forkert2022evolutionary} in which the initial data
even belongs to $L^1(\bR^n)$. For simplicity, in this paper, 
we do not pursuit this route, as we feel it is beyond the scope of 
homogenization which is our key motivation.
\end{rem}

Combining the apriori estimates from Lemma \ref{lem_reg}--\ref{lem_timeder_reg} and the compactness property in Lemma \ref{lem_compact}, we have the following convergence results which will be used in the rest of 
this paper.
\begin{lem} \label{cor.f}
Let $f^\eps=\frac{\rho^\eps}{\pie}$ be the solution of \eqref{backward} on $(0,T)$.
Suppose $B_0 < \8$ and $D_0<\8$.  
Then the following statements hold.
\begin{enumerate}
\item
There is a subsequence $f^\eps$ 
and an $f\in L^2(0,T; L^2(\pi))$ such that
$f^\eps\longrightarrow f$ in $L^2(0,T; L^2 (\pi))$, i.e.,
\begin{equation}\label{L2SpaceTimeConv}
\int_0^T\int|f^\eps_t-f_t|^2\pi\ud x\ud t\rightarrow0
\end{equation}
and hence
\begin{equation}\label{ae.conv}
\int|f^\eps_t-f_t|^2\pi\ud x\rightarrow0
\,\,\, \text{for all $t\in[0,T]$}.
\end{equation}
Here $\pi$ can be either of $\pi_0$ or $\pie$ in \eqref{three.pi}
so that $\text{LSI}(\lambda)$ holds.

\item We further have $\pt_tf^\eps\longrightarrow \pt_tf$ in $L^2(0,T; L^2 (\pi))$, i.e.,
\begin{equation}\label{L2SpaceTimeConvHigh}
\int_0^T\int|\pt_tf^\eps_t-\pt_tf_t|^2\pi\ud x\ud t\rightarrow0
\end{equation}
and hence
\begin{equation}\label{ae.conv.high}
\int|\pt_tf^\eps_t-\pt_tf_t|^2\pi\ud x\rightarrow0
\quad \text{for all $t\in[0,T]$}.
\end{equation}

\item Let $\eta^\eps_t = \sqrt{f^\eps_t}$. Then
\begin{equation}\label{conv.sqrt.feps}
    \eta^\eps_t \longrightarrow \eta_t := \sqrt{f_t}\,\,\,\text{in $L^2(\pi)$.}
\end{equation}
\item The entropy functional converges:
\begin{equation}\label{conv.entropy.eps}
\lim_\eps \int f^\eps_t(\log f^\eps_t) \pi_\eps\ud x =
\int f_t(\log f_t) \lbar{\pi}\ud x.
\end{equation}
(This of course strengthens the lower-semicontinuity of the entropy functional 
-- see Lemma \ref{lem:E}(i).)
\end{enumerate}
\end{lem}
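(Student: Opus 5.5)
Part (1) follows from an Aubin--Lions type argument with the compact embedding of Lemma \ref{lem_compact}. By Lemma \ref{lem_reg}(2), $f^\eps$ is bounded in $L^\infty(0,T;H^1(\pie))\cap H^1(0,T;L^2(\pie))$ uniformly in $\eps$. Since $\pie$ and $\pi_0$ are comparable up to the multiplicative factors $e^{\pm 2\beta_V}$ (from \eqref{pie}, with $\Lambda_\eps/\Lambda_0$ bounded above and below), and \eqref{B.est} holds, the same bounds hold in the $\pi_0$-weighted norms. This is why $\pi$ may be either measure in \eqref{three.pi}.

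Lemma \ref{lem_compact} gives $H^1(\pi)\hookrightarrow\hookrightarrow L^2(\pi)$. The time-derivative bound \eqref{f_time} and Hölder estimate \eqref{f_Holder_time} provide equicontinuity in $C([0,T];L^2(\pi))$. The Aubin--Lions--Simon lemma, or Arzelà--Ascoli with pointwise-in-time relative compactness from Lemma \ref{lem_compact}, then yields a subsequence with $f^\eps\to f$ in $C([0,T];L^2(\pi))$. This gives \eqref{L2SpaceTimeConv} and convergence for every $t$, i.e.\ \eqref{ae.conv}, not just almost every $t$.

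Part (2) is the same argument one level up. Apply it to $h^\eps=\pt_tf^\eps$ using Lemma \ref{lem_timeder_reg}, which needs $B_0,D_0<\infty$; note $C_0\lesssim D_0$ by Poincaré applied to $h_0^\eps$, whose $\pie$-mean is zero by mass conservation. The bounds \eqref{H1SpaceTimeEst_timeder} and \eqref{HolderTimeEst_timeder} give $h^\eps\to h$ in $C([0,T];L^2(\pi))$. Identifying $h=\pt_tf$ is a distributional limit of $\pt_t f^\eps$.

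For part (3), use the elementary inequality $|\sqrt a-\sqrt b|^2\le|a-b|$ for $a,b\ge0$, with $f^\eps\ge0$. It gives $\int|\eta^\eps_t-\eta_t|^2\pi\ud x\le\int|f^\eps_t-f_t|\pi\ud x\le\|f^\eps_t-f_t\|_{L^2(\pi)}\to0$, since $\pi$ is a probability measure.

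Part (4) is the main obstacle, because the weight $\pie$ converges only weakly. Write $\int f^\eps_t\log f^\eps_t\,\pie\ud x=\int\Phi(f^\eps_t)\,\pie\ud x$ with $\Phi(s)=s\log s$, and split it as
\[
\int\big(\Phi(f^\eps_t)-\Phi(f_t)\big)\pie\ud x+\int\Phi(f_t)\,\pie\ud x .
\]
For the second term, $\pie/\pi_0=e^{-V(x/\eps)}\Lambda_0/\Lambda_\eps$ converges weakly-$*$ in $L^\infty$ to $1$. This uses $\Lambda_\eps/\Lambda_0=\int\pi_0e^{-V(x/\eps)}\ud x\to\int_{\mathbb T^n}e^{-V}$, which must be matched with the normalization in \eqref{pi.ave}, where $\int\pi(x,y)\ud y=\pi_0$. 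Since $\Phi(f_t)\pi_0\in L^1$ (because $f_t\in L^2(\pi_0)$ and $|\Phi(s)|\le C(1+s^2)$), the second term tends to $\int\Phi(f_t)\lbar\pi\ud x$.

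For the first term, strong $L^2(\pi)$ convergence plus uniform bounds handle it. Dominate $|\Phi(a)-\Phi(b)|\le C(1+|a|+|b|)|a-b|$ up to a logarithmic loss near $0$, and treat the region near $0$ via $|\Phi(s)|\le C\sqrt s$. Then Cauchy--Schwarz with $\pie\le C\pi_0$ shows this term tends to $0$. An alternative is Vitali's theorem: use a.e.\ convergence along a subsequence and uniform integrability of $\Phi(f^\eps_t)$, which follows from the uniform $L^2$ bound on $f^\eps_t$. Uniqueness of the limit equation then removes the subsequence.
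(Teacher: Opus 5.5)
Your proposal is correct. For parts (1), (2) and (4) it follows the paper's route: uniform bounds from Lemmas \ref{lem_reg} and \ref{lem_timeder_reg}, the compact embedding of Lemma \ref{lem_compact}, and the H\"older-in-time estimates to get convergence at every $t$. Your Arzel\`a--Ascoli argument in $C([0,T];L^2(\pi_0))$ is the same as the paper's ``convergence in $L^2$ in space-time plus equicontinuity.'' For the entropy you use the same split into $\int(\Phi(f^\eps_t)-\Phi(f_t))\pie\ud x$ plus a term in which only the weight changes.

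Where you differ, your version is tighter.
In (3) the paper splits into the sets $\{f^\eps_t,f_t\le\delta\}$ and $\{f^\eps_t,f_t\ge\delta\}$ and uses dominated convergence and the mean value theorem. Your inequality $|\sqrt a-\sqrt b|^2\le|a-b|$ gives the quantitative bound $\|\eta^\eps_t-\eta_t\|^2_{L^2(\pi)}\le\|f^\eps_t-f_t\|_{L^2(\pi)}$ in one line. It also never meets the mixed region where one function lies below $\delta$ and the other above, which the paper's split leaves out.

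In (4) the paper handles the weight term by citing $\pie\wra\lbar\pi$. Since $f_t\log f_t$ is not bounded and continuous, what is actually needed is weak-$*$ convergence of $\pie/\pi_0$ in $L^\infty$, tested against $\Phi(f_t)\pi_0\in L^1$, which is exactly what you state. Your treatment of small values via $|\Phi(s)|\le C\sqrt s$, or the Vitali alternative, handles the region near $0$.

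You also make explicit a point the paper leaves implicit: the hypothesis $C_0<\infty$ of Lemma \ref{lem_timeder_reg} follows from $D_0<\infty$ by Poincar\'e, because $\int h^\eps_0\pie\ud x=0$.

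One small correction to the Vitali alternative. The further subsequence along which $f^\eps_t\to f_t$ a.e.\ should be removed by the usual sub-subsequence argument, since the limit value $\int\Phi(f_t)\lbar\pi\ud x$ does not depend on the subsequence. Uniqueness for the limit equation is not the right tool here: that equation is identified only later, in Theorem \ref{thm1}, and it is not needed, since the lemma asserts convergence only along a subsequence.
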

\begin{proof}
Statement \eqref{L2SpaceTimeConv} is a direct consequence of 
\eqref{f_time} and the compactness lemma just proved. The fact that \eqref{ae.conv} holds for all $t\in[0,T]$ is due to the equicontinuity of $f_t^\eps$ in time -- see \eqref{f_Holder_time}. 
Similarly, \eqref{L2SpaceTimeConvHigh} and \eqref{ae.conv.high}
follow from \eqref{H1SpaceTimeEst_timeder} and \eqref{HolderTimeEst_timeder}.

For \eqref{conv.sqrt.feps},
let $\delta >0$ be some positive number.
\begin{eqnarray*}
&& \int (\sqrt{f^\eps}-\sqrt{f})^2 \pi\ud x\\
&=&
\int_{\left\{x: f^\eps_t(x),\,\,f_t(x) \leq \delta\right\}} (\sqrt{f^\eps}-\sqrt{f})^2 \pi\ud x
+\int_{\left\{x: f^\eps_t(x),\,\,f_t(x) \geq \delta\right\}} (\sqrt{f^\eps}-\sqrt{f})^2 \pi\ud x\\
&=:& I_1 + I_2.
\end{eqnarray*}
The integral $I_1$ converges to zero by Lebesgue Dominated Convergence Theorem. For $I_2$,
by means of Mean Value Theorem, we have
\begin{eqnarray*}
\int_{\left\{x: f^\eps_t(x),\,\,f_t(x) \geq \delta\right\}} (\sqrt{f^\eps}-\sqrt{f})^2 \pi\ud x
\leq 
\int_{\left\{x: f^\eps_t(x),\,\,f_t(x) \geq \delta\right\}} 
\left(\frac{1}{2\sqrt{\delta}}\right)^2(f^\eps - f)^2 \pi\ud x
\longrightarrow0
\end{eqnarray*}
as  $\eps\rightarrow0$.

For \eqref{conv.entropy.eps}, let $\delta$ be similarly a positive number. Then
\begin{eqnarray*}
&&\int f^\eps_t(\log f^\eps_t) \pie - f_t(\log f_t) \lbar{\pi}\ud x\\
&=&
\int \big(f^\eps_t(\log f^\eps_t) - f_t(\log f_t)\big) \pie\ud x
+\int f_t(\log f_t)(\pie-\lbar{\pi})\ud x\\
&=&
\int_{\left\{x: f^\eps_t(x),\,\,f_t(x) \leq \delta\right\}}\big(f^\eps_t(\log f^\eps_t) - f_t(\log f_t)\big) \pie\ud x\\
&&
+\int_{\left\{x: f^\eps_t(x),\,\,f_t(x) \geq \delta\right\}}\big(f^\eps_t(\log f^\eps_t) - f_t(\log f_t)\big) \pie\ud x\\
&&
+\int f_t(\log f_t)(\pie-\lbar{\pi})\ud x\\
&=:& I_1 + I_2 + I_3.
\end{eqnarray*}
In the above, $I_3$ goes to zero as $\pie\rightharpoonup\lbar{\pi}$.
Integral $I_1$ goes to zero by Lebesgue Dominated Convergence Theorem again and note that 
$\pie\lesssim\lbar{\pi}$.
For $I_2$, by Mean Value Theorem again, we have
\begin{eqnarray*}
&&\int_{\left\{x: f^\eps_t(x),\,\,f_t(x) \geq \delta\right\}}\big(f^\eps_t(\log f^\eps_t) - f_t(\log f_t)\big) \pie\ud x\\
&\lesssim &
\int_{\left\{x: f^\eps_t(x),\,\,f_t(x) \geq \delta\right\}}
(|\log f^\eps_t| + |\log f_t|+1)\big|f^\eps_t-f_t\big| \pie\ud x\\
& \lesssim &
\left(\int ({f^\eps_t}^2 + {f_t}^2+1) \pie\ud x\right)^\frac12
\left(\int \big|f^\eps_t-f_t\big|^2 \pie\ud x\right)^\frac12
\longrightarrow_{\eps\rightarrow0} 0
\end{eqnarray*}
where we have used the fact that $|\log y| \lesssim |y|$ for $y \geq \delta$.
\end{proof}

Note that in general as $\pie$ does not converge strongly, we can only assert that 
$\rho^\eps_t\rightharpoonup\rho_t$.
In the following, we state two time continuity properties for $\rho^\eps_t$  and $\rho_t$
with respect to the Wasserstein space $(\sP_2(\mathbb R^n), W_2)$.
\begin{prop}[H\"{o}lder continuity in time]\label{lem:rhos}
Assume $E_\eps(\rho_0^\eps)<+\8$. For any $T>0$, let $\rhoe, t\in[0,T]$ be a solution to the $\eps$-gradient flow system \eqref{epsEDI}. 
Then it holds that
\begin{equation}\label{equi-w2}
W_2^2(\rho^\eps_t, \rho^\eps_s) \leq C_T|t-s| \quad \text{for $0\leq s\leq t\leq T$},
\end{equation}
where $W_2(\cdot, \cdot)$ is the standard $W_2$-distance.    
Consequently, there exists a subsequence 
$\rho^\eps$ and $\rho\in C^{0,\frac12}([0,T]; \sP_2(\bR^n))$ such that
\begin{equation}\label{convergence_W2}
 \rhoe \rightharpoonup \rho_t, \quad W_2^2(\rho_t, \rho_s) \leq C_T|t-s| \quad 
 \text{ for $\leq s\leq t\leq T$}.
\end{equation} 
\end{prop}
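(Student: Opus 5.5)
The plan is to use the Benamou--Brenier dynamical formulation of $W_2$ together with the EDI \eqref{epsEDI}. Along the flow, $\rho^\eps_t$ solves the continuity equation \eqref{cont.eqn} with velocity $w^\eps_t = B_\eps^{-1}\nabla u^\eps_t$, where $u^\eps_t = -\frac{\delta E_\eps}{\delta\rho}(\rho^\eps_t)$ as in \eqref{epsGFu}. Since the pair $(\rho^\eps_\tau, w^\eps_\tau)_{\tau\in[s,t]}$ is admissible in the Benamou--Brenier problem (after rescaling time to $[0,1]$), we get
\[
W_2^2(\rho^\eps_t,\rho^\eps_s)\leq (t-s)\int_s^t\int |w^\eps_\tau|^2\rho^\eps_\tau\ud x\ud\tau .
\]

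Next I convert the kinetic energy into the dissipation. By \eqref{B.est},
\[
|B_\eps^{-1}\nabla u|^2 \leq \|B_\eps^{-1}\|\,\langle \nabla u, B_\eps^{-1}\nabla u\rangle \leq \alpha_B \langle \nabla u, B_\eps^{-1}\nabla u\rangle .
\]
Hence
\[
\int_s^t\int |w^\eps_\tau|^2\rho^\eps_\tau \leq 2\alpha_B\int_s^t \psi_\eps(\rho^\eps_\tau,\pt_\tau\rho^\eps_\tau)\ud\tau .
\]
By \eqref{epsEDI}, this is bounded by $2\alpha_B\big(E_\eps(\rho^\eps_0)-E_\eps(\rho^\eps_t)\big)$.

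It remains to bound $E_\eps$ from below uniformly in $\eps$ and $t$. By \eqref{E_mc_eps},
\[
E_\eps(\rho)\geq \int\rho\log\frac{\rho}{\pie}\ud x - \frac{\gamma_W}{2} - \log\Lambda_\eps \geq -\frac{\gamma_W}{2}-\log\Lambda_\eps ,
\]
since relative entropy is nonnegative. Also $\Lambda_\eps = \int e^{-U_0-V_\eps}\ud x \leq e^{\beta_V}\Lambda_0$, so the lower bound holds uniformly. Together with \eqref{id_con}, which gives $\sup_\eps E_\eps(\rho^\eps_0)<\infty$ along the sequence, this yields \eqref{equi-w2} with a constant independent of $\eps$.

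The constant actually comes out independent of $T$. The main technical point is justifying that $w^\eps$ is an admissible velocity field with finite action and that $\psi_\eps$ coincides with $\frac12\int\langle\nabla u^\eps, B_\eps^{-1}\nabla u^\eps\rangle\rho^\eps$. This holds by the consistency computation after \eqref{psi-def} and the regularity from Lemma \ref{lem_reg}. One should also check finiteness of second moments so that $W_2$ is meaningful. For this, I would propagate the second moment of $\rho^\eps_0$ via the same $W_2$ bound, $W_2(\rho^\eps_t,\delta_0)\leq W_2(\rho^\eps_0,\delta_0)+C\sqrt t$; alternatively, LSI-type moment bounds from the finite relative entropy would do.

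For the compactness statement I would apply a refined Arzel\`a--Ascoli argument (as in \cite{AGS}). First, the sublevel sets of $E_\eps$ are uniformly tight: the entropy bound relative to $\pie\lesssim\pi_0$ gives tightness, and the uniform second-moment bound gives $W_2$-boundedness. Second, \eqref{equi-w2} gives equicontinuity. Narrow convergence for each rational $t$ follows by diagonal extraction, and then for all $t\in[0,T]$ by equicontinuity. Finally, the Hölder bound passes to the limit by the lower semicontinuity of $W_2$ under narrow convergence, $W_2^2(\rho_t,\rho_s)\leq\liminf_\eps W_2^2(\rho^\eps_t,\rho^\eps_s)\leq C_T|t-s|$.
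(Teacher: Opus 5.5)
Your proof is correct. The core estimate \eqref{equi-w2} is obtained exactly as in the paper: Benamou--Brenier for the continuity equation with velocity $B_\eps^{-1}\nabla u^\eps_t$, the bound $|B_\eps^{-1}\nabla u|^2\le\alpha_B\langle\nabla u,B_\eps^{-1}\nabla u\rangle$, and the EDI. The limit step uses the same narrow lower semicontinuity of $W_2$, which the paper checks via Kantorovich duality.

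You differ from the paper in the supporting estimates.
\begin{enumerate}
\item You make the $\eps$-uniformity and $T$-independence of the constant explicit, through $E_\eps\ge -\gamma_W/2-\beta_V-\log\Lambda_0$ together with \eqref{id_con}. The paper only notes $\int_s^t\psi_\eps\,\ud\tau<\infty$ and writes $C_T$.
\item For second moments, the paper combines Lemma \ref{lem_reg} with the LSI-weighted bound \eqref{est_x2} to get $\int|x|^2(f^\eps_t)^2\pie\,\ud x\le C$, then applies Cauchy--Schwarz. This needs $L^2(\pie)$ and $H^1(\pie)$ control of $f^\eps_0$, which is strictly more than the stated hypothesis $E_\eps(\rho^\eps_0)<\infty$. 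Your entropy route needs only the energy bound and gives moments uniform in both $t$ and $\eps$: $H(\rho^\eps_t|\pie)$ is bounded by energy monotonicity, and the entropy inequality is applied against the Gaussian integrability of $\pie\le e^{2\beta_V}\pi_0$. Your first option, propagating $W_2(\rho^\eps_0,\delta_0)$, itself needs this same argument at $t=0$.
\item You get one subsequence converging at every $t$ from the refined Arzel\`a--Ascoli argument. The paper applies Prokhorov at fixed times and tacitly relies on Lemma \ref{cor.f}. That lemma also identifies the limit density as $f_t\lbar{\pi}$, with $f^\eps_t\to f_t$ strongly, which Section 4 uses. Your argument does not supply this identification, but the proposition itself does not need it.
\end{enumerate}
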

\begin{proof}
First, by \eqref{epsGFu}, $\rho^\eps_t$ for $t\in (0,T)$, satisfies  
$\pt_t \rho^\eps_t = -\nabla \cdot \bbs{\rhoe \be \nabla u^\eps_t}$ and hence the 
continuity equation 
$\pt_t\rho^\eps_t + \text{div}(\rho^\eps_t w^\eps_t)=0$ with velocity 
$w^\eps_t = \be \nabla u^\eps_t$.
From \eqref{epsEDI}, \eqref{psi-eqn}, and $E_\eps(\rho_0^\eps)<+\8$, we have for any  $0\leq s\leq t\leq T$ that
\begin{equation}
\int_s^t \int\frac12 \la \nabla u^\eps_\tau, \be \nabla u^\eps_\tau  \ra \rho^\eps_\tau \ud x \ud \tau
=
\int_s^t \psi_\eps( \rho^\eps_\tau, \pt_\tau \rho^\eps_\tau) \ud \tau <+\8.
\end{equation}
Hence we can invoke \cite[Theorem 17.2]{ABS} to conclude that
\begin{equation}\label{eq3.49}
\begin{aligned}
W_2^2(\rho^\eps_t, \rho^\eps_s) \leq |t-s| \int_s^t \int |w^\eps_\tau|^2 \rho^\eps_\tau \ud x \ud \tau =& |t-s| \int_s^t \int |\be \nabla u^\eps_\tau|^2 \rho^\eps_\tau \ud x \ud \tau\\
\lesssim & |t-s|\int_s^t \int \la  \nabla u^\eps_\tau, \be  \nabla u^\eps_\tau \ra \rho^\eps_\tau \ud x \ud \tau=C_T|t-s|,
\end{aligned}
\end{equation}
giving the equi-continuity of $\rhoe$ in time in $(\sP(\mathbb R^n), W_2)$.

    Second, based on the regularity estimate for $f^\eps_t$ in Lemma \ref{lem_reg} and estimate \eqref{est_x2}, we know $\int |x|^2 (f^\eps_t)^2\pie\ud x<C.$ Then by Cauchy–Schwarz inequality,
  \begin{align*}
   \int |x|^2\rho^\eps_t\ud x
  = \int |x|^2 f^\eps_t\pie\ud x
  = \int |x| f^\eps_t\sqrt{\pie}|x|\sqrt{\pie}\ud x
  \leq \sqrt{\int |x|^2 (f^\eps_t)^2\pie\ud x}\sqrt{\int |x|^2\pie\ud x} < C_T.
  \end{align*}
  This uniform boundedness of the second moments implies the tightness of $\rhoe$, and thus by Prokhorov's theorem, we have the narrowly convergence of 
  \begin{eqnarray}
      \rho_t^\eps \rightharpoonup \rho_t, \quad \rho_s^\eps \rightharpoonup \rho_s.
  \end{eqnarray}
  By the Fatou lemma for the second moment of $\rho_t^\eps \in \sP_2(\bR^n)$, we also know $\int |x|^2\rho_t \ud x<+\8$ and the limit $\rho_t \in \sP_2(\bR^n)$.
 (Note that by abuse of notation, we use $\rho_t$ to represent its density
 which indeed exists and is given by $f_t\lbar{\pi}=\lim_\eps f^\eps_t\pi_\eps$.)
 
Third,
via
  the Kantoravich-Rubinstein Duality formulation for $W_2$ distance, we know for $\phi,\psi\in \text{Lip}_b(\bR^n), \phi(x)+\psi(y) \leq |x-y|^2$, 
  \begin{align*}
     \int \phi(x) \rho_t^\eps(x) \ud x + \int \psi(y) \rho_s^\eps(y) \ud y \leq W_2(\rho_t^\eps, \rho_s^\eps). 
  \end{align*}
From the narrowly convergence of $\rho^\eps$ in second step, we have
\begin{align*}
   \int \phi(x) \rho_t(x) \ud x + \int \psi(y) \rho_s(y) \ud y = \lim_{\eps\to 0}\int \phi(x) \rho_t^\eps(x) \ud x + \int \psi(y) \rho_s^\eps(y) \ud y \leq \liminf_{\eps\to 0} W_2(\rho_t^\eps, \rho_s^\eps).
\end{align*}
Then taking the supremum with respect to $\phi,\psi\in \text{Lip}_b(\bR^n), \phi(x)+\psi(y) \leq |x-y|^2$, we have
\begin{eqnarray}
    W_2(\rho_t, \rho_s) \leq \liminf_{\eps\to 0} W_2(\rho_t^\eps, \rho_s^\eps) \leq C_T|t-s|^{\frac12},
\end{eqnarray}
where we used the H\"older estimate in the first step.
\end{proof}

  By \cite[Lemma 17.8]{ABS} and \eqref{eq3.49}, we have that for $\eps>0$, $\rho^\eps$ is in fact 
\emph{absolutely continuous (AC) in time} in $(\sP_2(\bR^n),W_2)$. Our next result establishes the same property for $\rho$. This is an improvement of the 
$\frac12$-H\"{o}lder regularity in time and is consistent with the fact that $\rho$ satisfies some underlying continuity equation.

\begin{prop}[Absolute continuity in time]\label{lem:acwass} 
Let $\rho^\eps:[0,T]\to\sP_2(\mathbb R^n)$ be the solution to \eqref{epsFP} and $\rho:[0,T]\to\sP_2(\mathbb R^n)$ be the limit obtained in Lemma \ref{lem:rhos}, then the limit $\rho$ is a absolute continuous curve with respect to $W_2$ metric, denoted as $\rho\in AC([0,T];\sP_2(\mathbb R^n)).$
\end{prop}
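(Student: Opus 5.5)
The plan is to use the characterization of absolutely continuous curves in $(\sP_2(\mathbb R^n),W_2)$ through continuity equations with velocity fields of finite action, as in \cite[Theorem 17.2 and Lemma 17.8]{ABS}. Concretely, I will show that there is a vector field $w_t$ with
\[
\int_0^T\int |w_t|^2\rho_t\ud x\ud t<\infty
\]
such that $\pt_t\rho_t+\nabla\cdot(\rho_t w_t)=0$ holds in the distributional sense on $(0,T)\times\mathbb R^n$. Then \cite[Theorem 17.2]{ABS} gives
\[
W_2(\rho_t,\rho_s)\le\int_s^t\|w_\tau\|_{L^2(\rho_\tau)}\ud\tau,
\]
and the right-hand side is the integral of an $L^1(0,T)$ function, which is exactly absolute continuity.

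To construct $w$, I pass to the limit in the momentum fluxes $j^\eps_t:=\rho^\eps_t w^\eps_t$, where $w^\eps_t=\be\nabla u^\eps_t$ is the velocity from the proof of Proposition \ref{lem:rhos}. From \eqref{epsEDI}, \eqref{B.est} and the well-prepared data \eqref{id_con}, the action
\[
\int_0^T\int |w^\eps|^2\rho^\eps\ud x\ud t\lesssim\alpha_B\int_0^T\psi_\eps(\rho^\eps_t,\pt_t\rho^\eps_t)\ud t
\]
is bounded uniformly in $\eps$. By Cauchy--Schwarz, for any Borel set $A\subset(0,T)\times\mathbb R^n$,
\[
\iint_A |j^\eps|\le\Big(\iint|w^\eps|^2\rho^\eps\Big)^{1/2}\Big(\iint_A\rho^\eps\Big)^{1/2}.
\]
Combined with the uniform second-moment bound from Proposition \ref{lem:rhos}, this shows that $j^\eps$ is a tight family of vector measures with uniformly bounded total variation on $[0,T]\times\mathbb R^n$. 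Extracting a further subsequence, $j^\eps\rightharpoonup j$ narrowly. Since $\rho^\eps\rightharpoonup\rho$ for every $t$ by \eqref{convergence_W2}, hence also as space-time measures, passing to the limit in the weak form
\[
\int_0^T\!\!\int(\pt_t\varphi\,\rho^\eps+\nabla\varphi\cdot j^\eps)\ud x\ud t=0,\qquad \varphi\in C_c^\infty((0,T)\times\mathbb R^n),
\]
yields $\pt_t\rho+\nabla\cdot j=0$.

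The main step is showing that $j=\rho w$ with $w$ of finite action, since $\rho^\eps$ only converges weakly. I will use the joint convexity and lower semicontinuity of the Benamou--Brenier functional $(\rho,j)\mapsto\iint\frac{|j|^2}{\rho}$ under joint narrow convergence (cf.\ \cite[Chapter 15]{ABS} or the standard result in \cite{AGS}). This gives
\[
\iint\frac{|j|^2}{\rho}\le\liminf_\eps\iint\frac{|j^\eps|^2}{\rho^\eps}=\liminf_\eps\iint|w^\eps|^2\rho^\eps<\infty.
\]
In particular $j\ll\rho$, so $w:=\frac{\ud j}{\ud\rho}$ is well defined and lies in $L^2(\ud\rho_t\ud t)$. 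The convexity argument avoids identifying $w$ explicitly in terms of $\lbar B$; that identification belongs to the later $\Gamma$-convergence analysis. Finally, I disintegrate $j$ in time to write $j=\rho_t w_t\,\ud t$, using that its time marginal is absolutely continuous because of the uniform $L^2$ bound. Applying \cite[Theorem 17.2]{ABS} as above, together with \cite[Lemma 17.8]{ABS}, then gives $\rho\in AC([0,T];\sP_2(\mathbb R^n))$.
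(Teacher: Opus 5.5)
Your argument is correct, but it takes a different route from the paper's. The paper exploits the structure $\rho^\eps=f^\eps\pie$. From Lemma \ref{cor.f} and the two-sided bounds on $\pie/\lbar{\pi}$ it gets $\sqrt{f^\eps}\to\sqrt{f}$ strongly in $L^2(0,T;L^2(\lbar{\pi}))$. It then writes the flux as $\rho^\eps w^\eps=\sqrt{f^\eps}\,\big(\sqrt{f^\eps}\,w^\eps\pie/\lbar{\pi}\big)\,\lbar{\pi}$ and takes a weak limit $\Phi$ of the bracket in $L^2(0,T;L^2(\lbar{\pi}))$. Passing to the limit in this strong-times-weak product gives the limiting velocity $\Phi/\sqrt{f}$ with action $\iint\Phi^2\lbar{\pi}<\infty$, and the paper concludes with \cite[Theorem 2.15]{AGHB}.

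You instead use the general compactness for fluxes of bounded action: tightness and narrow compactness of $j^\eps$, plus joint lower semicontinuity of $(\rho,j)\mapsto\iint|j|^2/\rho$. Your route is more robust. It needs only the uniform action bound, narrow convergence of $\rho^\eps_t$, and the uniform second moments. It does not need the strong convergence of $f^\eps$, which in the paper rests on the high-regularity hypotheses $B_0,D_0<\infty$. In exchange, the paper's route gives an explicit representation of the limit velocity inside the weighted $L^2(\lbar{\pi})$ framework used in Section \ref{epsEDI-0EDI}.

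Two small remarks. First, the uniform action bound also requires a uniform lower bound on $E_\eps(\rho^\eps_T)$. This holds because $E_\eps\ge -\gamma_W/2-\log\Lambda_\eps$ and $\Lambda_\eps\le e^{\beta_V}\Lambda_0$; it is worth stating. Second, your final disintegration step is automatic, since $j=w\,\rho_t\,\ud t$ with $w\in L^2(\rho_t\,\ud t)$ is already in the required form.

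If only absolute continuity is wanted, your ingredients give an even shorter proof. Set $m_\eps(\tau):=\|w^\eps_\tau\|_{L^2(\rho^\eps_\tau)}$; it is bounded in $L^2(0,T)$, and $W_2(\rho^\eps_s,\rho^\eps_t)\le\int_s^t m_\eps\,\ud\tau$. Take a weak $L^2$ limit $m$ and use the narrow lower semicontinuity of $W_2$ shown in the proof of Proposition \ref{lem:rhos}. This yields $W_2(\rho_s,\rho_t)\le\int_s^t m\,\ud\tau$.
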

\begin{proof}
    First, recall \eqref{epsFP} or \eqref{cont.eqn} satisfied by the solution curve $\rhoe$, 
    $$\pt_t \rho^\eps_t = -\nabla \cdot \bbs{\rhoe w_t^\eps}, \quad w_t^\eps := -B_\eps^{-1} (\nabla\log\frac{\rhoe}{\pi_\eps}+\nabla W*\rhoe)$$
    we have
\begin{equation}
\int_0^t \int |w_\tau^\eps|^2 \rho^\eps_\tau \ud x \ud \tau \lesssim \int_0^t\psi_\eps(\rho_\tau^\eps,\pt_\tau \rho_\tau^\eps)\ud \tau<+\8,
\end{equation}
where we used definition of $\psi_\eps$ in \eqref{psi}.

    Second,  recall  Lemma \ref{lem_reg} and Corollary \ref{cor.f} implies  $f^\eps\longrightarrow f$ in $L^2(0,T; L^2(\pi_0))$. Moreover, Upon setting 
    $\eta^\eps = \sqrt{f^\eps}$, we have
\begin{eqnarray*}
\8&>&
\frac12 \iint f_\eps \Big\la \nabla\Big(\log f_\eps + W*(f_\eps \pi_\eps)\Big),  \pi_\eps \be \nabla\Big(\log f_\eps + W*(f_\eps \pi_\eps)\Big)\Big\ra \ud x\\
&=&2 \iint \Big\langle
\Big(\nabla \eta^\eps + \frac12\eta^\eps \nabla W*((\eta^\eps)^2 \pi_\eps)\Big),  
\pi_\eps \be 
\Big(\nabla \eta^\eps + \frac12\eta^\eps \nabla W*((\eta^\eps)^2 \pi_\eps)\Big)
\Big\rangle \ud x\\
&=&
2 \iint \Big\langle
\nabla \eta^\eps,  
\pi_\eps \be 
\nabla \eta^\eps 
\Big\rangle \ud x
+2\iint \Big\langle
 \nabla \eta^\eps ,  
\pi_\eps \be 
\Big(\eta^\eps \nabla W*((\eta^\eps)^2 \pi_\eps)\Big)
\Big\rangle \ud x\\
&&+
\frac12 \iint \Big\langle
\Big(\eta^\eps \nabla W*((\eta^\eps)^2 \pi_\eps)\Big),  
\pi_\eps \be 
\Big(\eta^\eps \nabla W*((\eta^\eps)^2 \pi_\eps)\Big)
\Big\rangle \ud x.
\end{eqnarray*}
  Since the third term in the last equality is positive and the second term has a lower bound
\begin{align*}
   \iint \Big\langle
\Big(\nabla \eta^\eps ,  
\pi_\eps \be 
\Big(\eta^\eps \nabla W*((\eta^\eps)^2 \pi_\eps)\Big)
\Big\rangle \ud x \geq - \delta \iint \Big\langle
\nabla \eta^\eps,  
\pi_\eps \be 
\nabla \eta^\eps 
\Big\rangle \ud x - C(\delta, \delta_W),
\end{align*}
we know 
$$\iint \big\langle
\nabla \eta^\eps,  
\overline{\pi} \be 
\nabla \eta^\eps 
\big\rangle \ud x\leq \iint \big\langle
\nabla \eta^\eps,  
\pi_\eps \be 
\nabla \eta^\eps 
\big\rangle \ud x<+\8.$$
Here we used the fact that by \eqref{pie} and \eqref{pi.ave}, $\frac{\pie}{\overline{\pi}}$ has lower and upper bounds. Thus we also have 
 $\sqrt{f^\eps}\longrightarrow \sqrt{f}$ in $L^2(0,T; L^2(\overline{\pi}))$.

 Third, recast the weak form of continuity equation as
 \begin{align*}
     -\iint \rhoe \pt_t\varphi \ud x \ud t =& \iint \rhoe w^\eps_t \cdot \nabla \varphi \ud x\ud t\\
     =&\iint f^\eps_t w^\eps_t \pie \cdot \nabla \varphi \ud x\ud t\\
     =&\iint \sqrt{f^\eps_t} \bbs{\frac{\sqrt{f^\eps_t}w^\eps_t \pie}{\overline{\pi}}}\overline{\pi} \cdot \nabla \varphi \ud x\ud t.
 \end{align*}
   Since  $$\iint \bbs{\frac{\sqrt{f^\eps_t}w^\eps_t \pie}{\overline{\pi}}}^2\overline{\pi} \lesssim \iint |w_t^\eps|^2 \rhoe \ud x \ud t<+\8,$$
   we know there exists $\Phi \in L^2(0,T; L^2(\overline{\pi}))$ such that  
   $$\frac{\sqrt{f^\eps}w^\eps \pie}{\overline{\pi}} \wra \Phi \quad \text{ in }L^2(0,T; L^2(\overline{\pi}))$$
   and
   \begin{equation}
       \iint \Phi^2_t \overline{\pi} \ud x \ud t \leq \liminf \iint \bbs{\frac{\sqrt{f^\eps_t}w^\eps_t \pie}{\overline{\pi}}}^2\overline{\pi} \ud x \ud t =\liminf \iint |w^\eps_t|^2 \rhoe \frac{\pie}{\overline{\pi}} \ud x \ud t<+\8.
   \end{equation}
   This, together the strong convergence of $\sqrt{f^\eps}$, allows one to take limit in the continuity equation to obtain
   \begin{align*}
       -\iint \rho_t \pt_t\varphi \ud x \ud t = \iint \sqrt{f_t} \Phi_t \overline{\pi} \cdot \nabla \varphi \ud x \ud t= \iint f_t \overline{\pi} \frac{ \Phi_t}{\sqrt{f_t}}  \cdot \nabla \varphi \ud x \ud t=\iint \rho_t  \frac{ \Phi_t}{\sqrt{f_t}} \cdot \nabla \varphi \ud x \ud t.
   \end{align*}
   Notice the limiting velocity field $\frac{ \Phi_t}{\sqrt{f_t}}$ satisfies
   $$\iint \frac{\Phi_t^2}{f_t} \rho_t\ud x \ud t=\iint \Phi^2_t \overline{\pi} \ud x \ud t <+\8.$$
   We can apply \cite[Theorem 2.15]{AGHB} to conclude that $\rho$ is absolutely continuous in time with respect to the $W_2$-metric.
\end{proof}

\section{Passing limit in EDI formulation of $\eps$-gradient flow}
\label{epsEDI-0EDI}
In this section, we prove that the EDI formulation \eqref{epsEDI} of 
the $\eps$-gradient flow \eqref{epsGF} converges to the limiting EDI
\eqref{0EDI}, which is equivalent to \eqref{0GF}. Thus it yields the homogenized limiting Wasserstein gradient flow. In essence, in Theorem \ref{thm1}, we need to prove \emph{sharp} lower bounds for the 
three functionals appearing on the left-hand-side of \eqref{epsEDI}. 
Recall the definitions of $\bar{E}, \psi, \psi^*$ in Section \ref{main}. 
The main result is stated in the following.
\begin{thm}\label{thm1}
Suppose the initial data $\rho_0^\eps$ is
well-prepared in the sense of \eqref{id_con} with weak limit $\rho_0$.
Furthermore, the assumptions of Lemma \ref{cor.f} hold.
Then,
  
\begin{enumerate}[(i)]
\item there exists a subsequence $\rho^\eps$ and a $\rho\in C^{0,\frac12}([0,T]; \sP_2(\bR^n))\cap AC([0,T];\sP_2(\mathbb R^n))$ such that $\rho^\eps\rightharpoonup\rho$;
\item for all $t\in[0,T]$, 
the lower bound for free energy holds
\begin{equation}\label{lower1}
\liminf_{\eps \to 0} E_\eps(\rhoe) \geq \lbar{E}(\rho_t);
\end{equation}
\item for all $t\in[0,T]$, the lower bound for the dissipation on the cotangent plane holds
\begin{equation}\label{lower2}
\liminf_{\eps \to 0} \int_0^t \psi^*_\eps\left(\rho^\eps_\tau, - \frac{\delta E_\eps}{\delta \rho}(\rho^\eps_\tau)\right) \ud \tau \geq  
\int_0^t \psi^*\left(\rho_\tau, - \frac{\delta\lbar{E}}{\delta \rho}(\rho_\tau)\right) \ud \tau;
\end{equation}
\item for all $t\in[0,T]$, the lower bound for the dissipation on the tangent plane holds
\begin{equation}\label{lower3}
\liminf_{\eps \to 0} \int_0^t \psi_\eps(\rho^\eps_\tau, \pt_\tau \rho^\eps_\tau) \ud \tau \geq  \int_0^t \psi(\rho_\tau, \pt_\tau \rho_\tau) \ud \tau.
\end{equation}
\end{enumerate}
\end{thm}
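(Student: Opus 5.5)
Part (i) is already contained in Proposition \ref{lem:rhos} and Proposition \ref{lem:acwass}: the uniform $W_2$-H\"older bound \eqref{equi-w2} and the uniform second moments give a narrowly convergent subsequence $\rho^\eps_t\rightharpoonup\rho_t$ for every $t$, with $\rho\in C^{0,\frac12}\cap AC([0,T];\sP_2)$. I will also use Lemma \ref{cor.f} along the same subsequence, which gives $f^\eps_t\to f_t$ strongly in $L^2(\pi)$ with $\rho_t=f_t\lbar\pi$. For (ii), Lemma \ref{cor.f}(4) yields convergence of the entropy part $\int f^\eps_t\log f^\eps_t\,\pie\ud x\to\int f_t\log f_t\,\lbar\pi\ud x$. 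The interaction term $\frac12\iint W\rho^\eps_t\otimes\rho^\eps_t$ converges because $W$ is bounded and continuous and $\rho^\eps_t\otimes\rho^\eps_t\rightharpoonup\rho_t\otimes\rho_t$ narrowly. The constant $-\log\Lambda_\eps$ is handled as in the paper's remark: it plays no role, since the same constant appears in $E_\eps(\rho^\eps_0)$ and we may normalize it away. This gives (ii), and in fact equality.

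For (iii), I would write $\psi^*_\eps=2\int\langle\nabla\eta^\eps+\tfrac12\eta^\eps\nabla W*\rho^\eps,\;\pie\be(\nabla\eta^\eps+\tfrac12\eta^\eps\nabla W*\rho^\eps)\rangle$ with $\eta^\eps=\sqrt{f^\eps}$, as in the proof of Proposition \ref{lem:acwass}. Set $\zeta^\eps:=\nabla\eta^\eps+\frac12\eta^\eps\nabla W*\rho^\eps$. The second piece converges strongly: $\eta^\eps\to\eta$ in $L^2$ by \eqref{conv.sqrt.feps}, and $\nabla W*\rho^\eps_t\to\nabla W*\rho_t$ locally uniformly, since $W$ is Lipschitz and $\rho^\eps_t$ converges narrowly and is tight. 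The first piece $\nabla\eta^\eps$ converges weakly to $\nabla\eta$. The problem is therefore a lower bound for $\int_0^t\int\langle\zeta^\eps,a_\eps\zeta^\eps\rangle\ud x\ud\tau$ with oscillating weight $a_\eps=\pie\be$, where $\zeta^\eps$ is a gradient plus a strongly convergent term. This is exactly the $\Gamma$-$\liminf$ for the oscillatory quadratic form. I would invoke the paper's homogenization result (the $\Gamma$-limit in Theorem \ref{GammaMainThm} / cell problem \eqref{eff.A}, as in \cite{GaoYip}) to get, at each fixed time, $\liminf\ge 2\int\langle\zeta,\lbar\pi\bar B^{-1}\zeta\rangle$ with $\zeta=\nabla\eta+\frac12\eta\nabla W*\rho$. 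A strongly convergent lower order perturbation does not alter the $\Gamma$-limit of a uniformly elliptic quadratic form, by a standard div-curl / oscillating test function argument. Fatou in $\tau$ then yields \eqref{lower2}, identifying the right side with $\psi^*(\rho,-\frac{\delta\lbar E}{\delta\rho})$.

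For (iv), I would use the dual representation \eqref{psi-def}: for any smooth test $\xi$,
\[
\int_0^t\psi_\eps(\rho^\eps_\tau,\pt_\tau\rho^\eps_\tau)\ud\tau\ \ge\ \int_0^t\Big[\int\xi_\tau\,\pt_\tau\rho^\eps_\tau\ud x-\psi^*_\eps(\rho^\eps_\tau,\xi^\eps_\tau)\Big]\ud\tau .
\]
Here $\xi^\eps=\xi+\eps\chi(x/\eps)\cdot\nabla\xi$ is a recovery sequence built from the cell-problem corrector, so that $\psi^*_\eps(\rho^\eps,\xi^\eps)\to\psi^*(\rho,\xi)$. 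This is the $\Gamma$-$\limsup$ direction, and it uses the strong convergence of $f^\eps$ against the oscillating weight $\pie\be$. The linear term converges by $\pt_t f^\eps\to\pt_tf$ (Lemma \ref{cor.f}(2)) together with $\pie\rightharpoonup\lbar\pi$, after which the corrector contribution vanishes as $O(\eps)$. Taking the supremum over $\xi$ gives \eqref{lower3}, using that a supremum over smooth $\xi$ recovers $\psi$ by density.

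The main obstacle is (iii)–(iv), namely identifying the correct effective weight. With weak-only convergence of $\pie$, the energetic and kinematic oscillations interact, so the liminf/limsup constructions must use the cell problem weighted by $\pi(x,y)$ rather than a naive average. I expect to first carry this out for fixed $\tau$, following the stationary $\Gamma$-convergence theorem, and only then integrate in time.
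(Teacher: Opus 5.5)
Your proposal is correct. Its core ingredients match the paper's:
\begin{itemize}
\item for (i), compactness from Propositions \ref{lem:rhos}--\ref{lem:acwass};
\item for (ii), continuity of the entropy \eqref{conv.entropy.eps};
\item for (iii), rewriting $\psi^*_\eps$ through $\eta^\eps=\sqrt{f^\eps}$, followed by a shifted $\Gamma$-$\liminf$ for the $\pi(x,y)$-weighted cell problem;
\item for (iv), the dual formula \eqref{psi-def} with a recovery sequence that uses the strong convergence of $f^\eps$.
\end{itemize}

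The genuine difference is how you pass from fixed times to time integrals. The paper proves the fixed-$\tau$ inequalities and then lifts them with Stefanelli's result \cite[Cor.~4.4]{stefanelli2008brezis}.

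For \eqref{lower2} you use Fatou's lemma instead. This is legitimate here for three reasons: $\psi^*_\eps\ge 0$; Lemma \ref{cor.f}(3) gives $\eta^\eps_\tau\to\eta_\tau$ for every $\tau$; and a $\Gamma$-$\liminf$ needs no a priori bound on the sequence.

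For \eqref{lower3} you apply Fenchel--Young directly to space--time test functions, using the explicit corrector $\xi^\eps=\xi+\eps\chi(x/\eps)\cdot\nabla\xi$. This is admissible because $\pi(x,y)$ factorizes as $e^{-U_0(x)}e^{-V(y)}$ up to a constant, so the cell problem does not depend on $x$.

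Your route is more elementary, and it makes the recovery sequence concrete rather than citing \cite[Appendix B]{GaoYip}. Its cost is a final interchange of $\sup_\xi$ with $\int_0^t$. That step amounts to density of smooth space--time test functions in the tangent space at $\rho_\tau$, measurably in $\tau$, and you should state it explicitly. Stefanelli's lemma packages exactly this measurability issue. For \eqref{lower2} it also needs only weak space--time convergence of $\eta^\eps$ in $L^2(0,T;H^1)$.

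There are three small points to fix.

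First, the shift $\frac12\eta\nabla W*\rho$ is not a gradient, so Theorem \ref{GammaMainThm} alone does not give the shifted $\liminf$. Expanding the square produces the cross term $\int\langle\pie\be V,\nabla\eta^\eps\rangle$, which is a product of two weakly convergent sequences. What you need is the analogue of Lemma \ref{Gamma2}, and your oscillating-test-function/div--curl argument does supply it. Replacing $\frac12\eta^\eps\nabla W*\rho^\eps$ by its limit then needs only Cauchy--Schwarz, as in \eqref{key.psi}.

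Second, your displayed inequality in (iv) is not Fenchel--Young as written. The linear term must be $\langle\xi^\eps_\tau,\pt_\tau\rho^\eps_\tau\rangle$. The difference $\langle\xi^\eps_\tau-\xi_\tau,\pt_\tau\rho^\eps_\tau\rangle$ is then $O(\eps)$, by the bound on $\int_0^t\!\int(\pt_s f^\eps_s)^2\pie$ from Lemma \ref{lem_reg}.

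Third, with $W$ only Lipschitz, $\nabla_xW(x,\cdot)$ need not be continuous. Narrow convergence therefore does not by itself give $\nabla W*\rho^\eps_\tau\to\nabla W*\rho_\tau$. Use instead that $\rho^\eps_\tau=f^\eps_\tau\pie\rightharpoonup\rho_\tau$ weakly in $L^1$: this follows from strong convergence of $f^\eps_\tau$ together with weak-$*$ convergence of $\pie/\lbar\pi$. That gives pointwise convergence of the convolution, and dominated convergence then yields the strong $L^2$ convergence you need; local uniformity is not required.
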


As mentioned before, our approach relies on the idea of
convergence of functionals in a variational setting. In particular, we make 
use of the following classical result of $\Gamma$-convergence \cite{braides2002gamma, braides2006handbook, dal2012introduction}.
\begin{thm}[$\Gamma$-conv]\label{GammaMainThm}
Let $\Omega$ be a bounded open domain of $\mathbb R^n$ and 
$A_\eps(x) = A(x, \frac{x}{\eps})$ be a symmetric positive definite
matrix, $1$-periodic in the second variable $y=\frac{x}{\eps}$. Consider the functional
\begin{equation}
{\mathcal F}_\eps(v) =\int_\Omega
\left\la A\left(x,\frac{x}{\eps}\right)\nabla v, \nabla v\right\ra\ud x,
\quad v\in H^1_0(\Omega) + w
\end{equation}
where $w\in H^1(\Omega)$ is given. Then ${\mathcal F}_\eps$ $\Gamma$-converges 
in $L^2(\Omega)$ to the following functional
\begin{equation}
{\mathcal F}(v) =\int_\Omega
\left\la \lbar{A}(x)\nabla v, \nabla v\right\ra\ud x,
\quad v\in H^1_0(\Omega) + w.
\end{equation}
In detail, 
\begin{enumerate}
\item for any $v_\eps\in H^1_0(\Omega) + w$ that converges to 
$v\in H^1_0(\Omega) + w$ in $L^2(\Omega)$, it holds that
\begin{equation}
\liminf_{\eps\to0}{\mathcal F}_\eps(v_\eps) \geq {\mathcal F}(v);
\end{equation}
\item for any $v\in H^1_0(\Omega) + w$, there exists 
$v_\eps\in H^1_0(\Omega) + w$ that converges to $v$ in $L^2(\Omega)$, such 
that 
\begin{equation}
\lim_{\eps\to0}{\mathcal F}_\eps(v_\eps) = {\mathcal F}(v).
\end{equation}
\end{enumerate}
Furthermore, the effective matrix $\lbar{A}$ can be found by the following
variational formula: for any $p\in\mathbb R^n$,
\begin{equation}\label{eff.homog.coeff}
\big\la \lbar{A}(x)p, p\big\ra 
= \inf\left\{\int_{\mathbb T^n} 
\left\la A\left(x,y\right)(\nabla v+p),\, (\nabla v + p)\right\ra\ud y,
\quad v\in H^1(\mathbb T^n)\right\}.
\end{equation}
\end{thm}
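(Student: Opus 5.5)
The plan is to prove the two inequalities separately. The upper bound uses an explicit corrector construction. The lower bound uses a duality (convexity) argument combined with compensated compactness. Throughout, I use that $A$ is uniformly elliptic and bounded, as for $B$ in \eqref{B.est}, and smooth in $(x,y)$.

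\textbf{Step 1: cell problem.} For each $x\in\bar\Omega$ and each $p\in\mathbb R^n$, let $\chi_p(x,\cdot)\in H^1(\mathbb T^n)$ with zero mean be the minimizer in \eqref{eff.homog.coeff}. It solves the cell equation
\[
\nabla_y\cdot\big(A(x,y)(p+\nabla_y\chi_p)\big)=0 \quad\text{in } \mathbb T^n .
\]
By Lax--Milgram, $\chi_p$ is linear in $p$, so $\chi_p=\chi(x,y)\cdot p$. By smooth dependence on parameters, $\chi$ is smooth in $x$ and $y$. This gives
\[
\lbar A(x)p=\int_{\mathbb T^n}A(x,y)\big(p+\nabla_y\chi(x,y) p\big)\ud y,
\]
and $\lbar A$ is symmetric, uniformly elliptic and continuous in $x$.

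\textbf{Step 2: upper bound (recovery sequence).} By density of smooth functions in $H^1_0(\Omega)+w$ and continuity of $\mathcal F$ in $H^1$, it suffices to take $v$ smooth, say $v\in C^\infty(\bar\Omega)$ with $v-w\in C_c^\infty(\Omega)$. A diagonal argument then handles general $v$. Set
\[
v_\eps(x)=v(x)+\eps\,\chi\big(x,\tfrac x\eps\big)\cdot\nabla v(x)\,\zeta(x),
\]
where $\zeta\in C^\infty_c(\Omega)$ is a cutoff equal to $1$ outside a thin boundary layer, so that the boundary condition is preserved. Then
\[
\nabla v_\eps=(I+\nabla_y\chi)\nabla v+O(\eps)+(\text{terms supported in the layer}),
\]
and $v_\eps\to v$ in $L^2$. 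By the periodic averaging lemma (weak convergence of $g(x,x/\eps)$ to $\int_{\mathbb T^n}g(x,y)\ud y$ for continuous $g$),
\[
\mathcal F_\eps(v_\eps)\to \int_\Omega\la \lbar A\nabla v,\nabla v\ra\ud x+O(|\text{layer}|).
\]
Sending the layer width to zero along a diagonal sequence gives the recovery sequence.

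\textbf{Step 3: lower bound.} Let $v_\eps\to v$ in $L^2$. We may assume $\liminf\mathcal F_\eps(v_\eps)<\infty$. Along a subsequence realizing the liminf, ellipticity gives $\nabla v_\eps\rightharpoonup\nabla v$ weakly in $L^2$. Fix $\varphi\in C^\infty_c(\Omega)$ and set $b_\eps=(I+\nabla_y\chi(x,\tfrac x\eps))\nabla\varphi(x)$. Pointwise convexity gives
\[
\la A_\eps a,a\ra\ge 2\la A_\eps a,b_\eps\ra-\la A_\eps b_\eps,b_\eps\ra .
\]
Applying this with $a=\nabla v_\eps$ yields
\[
\mathcal F_\eps(v_\eps)\ge 2\int\la\nabla v_\eps,\,A_\eps b_\eps\ra-\int\la A_\eps b_\eps,b_\eps\ra .
\]
The second integral converges to $\int\la\lbar A\nabla\varphi,\nabla\varphi\ra$ by periodic averaging. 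The first is a product of two weakly convergent sequences. The field $\sigma_\eps:=A_\eps b_\eps$ converges weakly to $\lbar A\nabla\varphi$. By the cell equation its divergence contains no $\eps^{-1}$ term: $\nabla_x\cdot\sigma_\eps$ involves only $x$-derivatives and is bounded in $L^2$, hence compact in $H^{-1}$. Meanwhile $\nabla v_\eps$ is curl-free. The div--curl lemma (tested against the compactly supported $\varphi$-structure; alternatively, integrate by parts, writing $\int \la\nabla v_\eps,\sigma_\eps\ra=-\int v_\eps\,\nabla\cdot\sigma_\eps$, and use strong $L^2$ convergence of $v_\eps$) gives
\[
\int\la\nabla v_\eps,\sigma_\eps\ra\to\int\la\nabla v,\lbar A\nabla\varphi\ra .
\]
Hence
\[
\liminf_{\eps\to0}\mathcal F_\eps(v_\eps)\ge \int 2\la\lbar A\nabla v,\nabla\varphi\ra-\la\lbar A\nabla\varphi,\nabla\varphi\ra .
\]
Taking $\varphi$ with $\nabla\varphi\to\nabla v$ in $L^2$ gives $\mathcal F(v)$. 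For the affine space, use $\varphi$ approximating $v$ in $H^1$ locally; the integration-by-parts version tolerates this since only interior compact support of the test field matters after localizing with cutoffs.

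\textbf{Main obstacle.} I expect the main difficulty to be the weak-times-weak product in Step 3, namely justifying the convergence of $\int\la\nabla v_\eps,A_\eps b_\eps\ra$. This is exactly where the cell equation must be used to kill the singular $\eps^{-1}\nabla_y\cdot$ term, and where the boundary and affine constraint require a careful cutoff argument.
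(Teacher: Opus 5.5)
Your argument is correct, granting (as you do) that $A$ is bounded, uniformly elliptic and smooth in $(x,y)$; the statement leaves this implicit, but the paper's standing assumptions supply it. It is not the paper's route, however. The paper gives no proof of its own. It relies on the classical purely periodic case $A(x/\eps)$ from \cite{marcellini1978periodic,muller1987homogenization}, where convexity reduces the multi-cell formula to the single-cell formula \eqref{eff.homog.coeff}. It then \emph{freezes the slow variable} as in \cite[Corollary 6.1]{weinan1991class}, approximating $A$ by coefficients piecewise constant in $x$ on small cubes and using continuity in $x$.

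You instead keep the $x$-dependent corrector $\chi(x,y)$ throughout. Your recovery sequence $v+\eps\zeta\,\chi(x,\frac{x}{\eps})\cdot\nabla v$ is exactly the first-order two-scale ansatz of Appendix~\ref{asym.exp}. Your lower bound is an oscillating-test-function/duality argument: the cell equation kills the $\eps^{-1}\nabla_y\cdot$ part of $\nabla\cdot\sigma_\eps$, so $\int\la\nabla v_\eps,\sigma_\eps\ra\ud x=-\int v_\eps\,\nabla\cdot\sigma_\eps\ud x$ converges by strong-times-weak convergence. The div--curl lemma is not even needed.

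The cited route is more general. It covers non-quadratic convex integrands and needs only continuity of $A$ in $x$ and measurability in $y$. Yours uses the linear cell problem and $C^1$ dependence on $x$, so that $\nabla_x\chi$ and $\nabla_x\nabla_y\chi$ are bounded. In exchange it is self-contained and local (the lower bound never uses the boundary condition). It also produces explicit recovery sequences with bounded gradients, which is the form convenient in Section~\ref{sec4.3}, where the energy carries the extra weight $f^\eps$.

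Two details should be made precise.

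\textbf{Step 2.} A $v\in C^\infty(\lbar{\Omega})$ with $v-w\in C^\infty_c(\Omega)$ exists only if $w$ is smooth. For general $w\in H^1(\Omega)$, keep $v$ itself but build the corrector from a mollification $v_\delta$ of $v$ on a neighbourhood of $\mathrm{supp}\,\zeta$, i.e.\ $v_\eps=v+\eps\zeta\,\chi(x,\frac{x}{\eps})\cdot\nabla v_\delta$. Then control the errors by $\|\nabla v-\nabla v_\delta\|_{L^2(\mathrm{supp}\,\zeta)}$ and $\int_{\{\zeta<1\}}|\nabla v|^2\ud x$, rather than by the measure of the layer.

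\textbf{Step 3.} Gradients of $\varphi\in C^\infty_c(\Omega)$ approximate $\nabla v$ in $L^2(\Omega)$ only when $v\in H^1_0(\Omega)$, so the localization you allude to must be written out. Take $b_\eps=\zeta(x)\big(I+\nabla_y\chi(x,\frac{x}{\eps})\big)\nabla\varphi(x)$, with $\zeta\in C^\infty_c(\Omega)$, $0\le\zeta\le1$, and $\varphi$ smooth and approximating $v$ in $H^1(\mathrm{supp}\,\zeta)$. The $\eps^{-1}$ term still vanishes because $\zeta\nabla\varphi$ depends on $x$ only. You then get $\liminf_{\eps}\mathcal F_\eps(v_\eps)\ge\int(2\zeta-\zeta^2)\la\lbar{A}\nabla v,\nabla v\ra\ud x$, and letting $\zeta\uparrow1$ gives $\mathcal F(v)$ by monotone convergence.
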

We make the following remarks about the above result.
\begin{rem}\hspace{10pt}
\begin{enumerate}
\item 
The above result is quite classical. We refer to \cite{braides2002gamma, braides2006handbook, dal2012introduction} for more detailed explanations and overall exposition. We would like to point out that traditionally, the result was first proved for periodic coefficient, $A_\eps(\cdot) = A(\frac{\cdot}{\eps})$ -- see for example \cite[Theorems 4.1, 4.4]{marcellini1978periodic} and 
also \cite{muller1987homogenization} for a more constructive proof. To go from 
$A_\eps(\cdot) = A(\frac{\cdot}{\eps})$ to 
$A_\eps(\cdot) = A(\cdot,\frac{\cdot}{\eps})$, one common strategy is to \emph{freeze} the 
slow variable -- in the current case $x$ -- as already revealed by the formula \eqref{eff.homog.coeff}. 
We refer to \cite[Corollary 6.1]{weinan1991class} and its proof for explicit steps of how to accomplish this.

\item As an application, we will apply the above result to the case 
$A_\eps(x) = \pie(x)B_\eps^{-1}$, or
\begin{equation}
A(x,y) = D(x,y)\,\,(= \pi(x,y)B^{-1}(y))
\quad\text{(see \eqref{A.def}).}
\end{equation}
The resultant formula for $\lbar{A}(x)$ is given by \eqref{eff.A} in Appendix \ref{asym.exp}. In Appendix \ref{asym.exp}, we derive the same formula using asymptotic analysis.

\item Although the above classical $\Gamma$-convergence result is stated in bounded domain, it also holds true for $\bR^n$ in our current case. This can be seen from $A_\eps(x) = \pie(x)B_\eps^{-1}(x)$ and the weight $\pie$ decays uniformly (in $\eps$) at far field, which allows one to extend the lower and upper bound estimates to whole domain using cutoff techniques.
\end{enumerate}
\end{rem}

With that, we proceed to the proofs.

\subsection{Proof of statements (i) and \eqref{lower1}.} \label{sec4.1}
Statement (i) is the content of Lemma \ref{lem:rhos} and \ref{lem:acwass}.
Conclusion \eqref{lower1} follows directly
from \cite[Lemma 9.4.3]{AGS} which says that the entropy
functional is jointly lower-semicontinuous with respect to the weak
convergence of $\rhoe$ and $\pie$. 
(This is a more ``advanced'' statement than 
Lemma \ref{lem:E}(i) as both $\rhoe$ and the reference measure $\pie$ are changing.) 
In our case, it also follows simply
from the strong convergence of $f_t^\eps$ to $f_t$ in $L^2(\pie)$ so that we in fact have
continuity -- see \eqref{conv.entropy.eps}.
Again, the validity of the statement for all $t$ is due to the continuity
of $f_t$ in time -- see \eqref{f_Holder_time}.


\subsection{Proof of \eqref{lower2} (time independence case)}\label{sec4.2}
Recall 
\[\psi^*_\eps (\rhoe, - \frac{\delta E_\eps}{\delta \rho})=
\frac12 \int \rhoe \left\la \nabla ( \log\frac{\rhoe}{\pi_\eps}+  W*\rhoe), \be \nabla ( \log\frac{\rhoe}{\pi_\eps}+  W*\rhoe)  \right\ra \ud x.
\]
Introducing $\eta_\eps = \sqrt{f_\eps}$ where $f_\eps = \frac{\rhoe}{\pie}$. Then
$\eta_\eps\longrightarrow\bar{\eta}:=\sqrt{f}$ in $L^2(\bar{\pi})$ -- see \eqref{conv.sqrt.feps}. 
We compute,
\begin{eqnarray}
\8&>&
\psi^*_\eps (\rhoe, - \frac{\delta E_\eps}{\delta \rho})\nonumber\\
&=&
\frac12 \int f_\eps \Big\la \nabla\Big(\log f_\eps + W*(f_\eps \pi_\eps)\Big),  \pi_\eps \be \nabla\Big(\log f_\eps + W*(f_\eps \pi_\eps)\Big)\Big\ra \ud x\nonumber\\
&=&
2 \int \Big\langle
\Big(\nabla \eta^\eps + \frac12\eta^\eps \nabla W*((\eta^\eps)^2 \pi_\eps)\Big),  
\pi_\eps \be 
\Big(\nabla \eta^\eps + \frac12\eta^\eps \nabla W*((\eta^\eps)^2 \pi_\eps)\Big)
\Big\rangle \ud x\nonumber\\
&=&
2 \int \Big\langle
\Big(\nabla \eta^\eps 
+ \frac12\bar{\eta}\nabla W*(\bar{\eta}^2\bar{\pi}) 
\Big),\pi_\eps \be 
\Big(\nabla \eta^\eps 
+ \frac12\bar{\eta}\nabla W*(\bar{\eta}^2\bar{\pi}) 
\Big)
\Big\rangle \ud x\nonumber\\
&& +4\int \Big\langle
\Big(\frac12\eta^\eps\nabla W*((\eta^\eps)^2 \pi_\eps)
- \frac12\bar{\eta}\nabla W*(\bar{\eta}^2\bar{\pi}) 
\Big),\pi_\eps \be 
\Big(\nabla \eta^\eps 
+ \frac12\bar{\eta}\nabla W*(\bar{\eta}^2\bar{\pi}) 
\Big)
\Big\rangle \ud x\nonumber\\
&&
+\frac12 \int \Big\langle
\Big( \eta^\eps\nabla W*((\eta^\eps)^2 \pi_\eps)
-  \bar{\eta}\nabla W*(\bar{\eta}^2\bar{\pi})\Big)
,\pi_\eps \be 
\Big(\eta^\eps\nabla W*((\eta^\eps)^2 \pi_\eps)
- \bar{\eta}\nabla W*(\bar{\eta}^2\bar{\pi})\Big)
\Big\rangle \ud x.\nonumber\\
\label{key.psi}
\end{eqnarray}
For the second and third terms of the above, we recall
  $f_\eps \to f$ in $L^2(\bar{\pi})$, $  f_\eps \pie=\rho_\eps \wra \rho$ and $\pie \wra \bar{\pi}$, we have $f_\eps \pie \wra f\bar{\pi}=\rho$, i.e,
$$(\eta^\eps)^2\pie \wra \bar{\eta}^2 \bar{\pi}.$$ 
This, together with $\nabla W \in L^\8$, implies
\begin{align*}
    |\int\nabla W(x,y) ((\eta^\eps)^2\pie-\bar{\eta}^2 \bar{\pi}) \ud y| \to 0, \quad \text{ for a.e. }x\in \bR^n.
\end{align*}
Thus 
we have
\begin{align*}
  &\int  
\Big|\eta^\eps\nabla W*((\eta^\eps)^2 \pi_\eps)
- \bar{\eta}\nabla W*(\bar{\eta}^2\bar{\pi}) 
\Big|^2\pi_\eps \ud y \\
\lesssim& \int|\eta^\eps -\bar{\eta}|^2\pie \ud y + \int\Big|\nabla W*((\eta^\eps)^2 \pi_\eps)
-  \nabla W*(\bar{\eta}^2\bar{\pi}) 
\Big|^2\bar{\eta}^2\pie \ud y \to 0,
\end{align*}
where we used $\eta_\eps\longrightarrow\bar{\eta}$ in $L^2(\bar{\pi})$ and $\frac{\pie}{\bar{\pi}}$ has lower and upper bound.
Therefore, for the second and third term in \eqref{key.psi}, we have
\begin{eqnarray*}
&&\left|\int \Big\langle
\Big(\eta^\eps\nabla W*((\eta^\eps)^2 \pi_\eps)
- \bar{\eta}\nabla W*(\bar{\eta}^2\bar{\pi}) 
\Big),\pi_\eps \be 
\Big(\nabla \eta^\eps 
+ \frac12\bar{\eta}\nabla W*(\bar{\eta}^2\bar{\pi}) 
\Big)
\Big\rangle \ud x\right|\\
&\lesssim&
\|\eta_\eps - \bar{w}\|_{L^2(\bar{\pi})}\|\nabla \eta_\eps\|_{L^2(\bar{\pi})}
\longrightarrow0\\
&&
\left|\int \Big\langle
\Big(\eta^\eps\nabla W*((\eta^\eps)^2 \pi_\eps)
- \bar{\eta}\nabla W*(\bar{\eta}^2\bar{\pi})\Big)
,\pi_\eps \be 
\Big(\eta^\eps\nabla W*((\eta^\eps)^2 \pi_\eps)
- \bar{\eta}\nabla W*(\bar{\eta}^2\bar{\pi})\Big)
\Big\rangle \ud x\right|\\
&\lesssim&
\|\eta_\eps - \bar{\eta}\|_{L^2(\bar{\pi})}^2
\longrightarrow0.
\end{eqnarray*}
Hence, we will just concentrate on the first term in \eqref{key.psi}. To this end, we have the following lemma:
\begin{lem}\label{Gamma2}
Let $\lbar{V}(x):=\frac12\bar{\eta}\nabla W*(\bar{\eta}^2\bar{\pi})$. Then
for any $\eta^\eps\longrightarrow \bar{\eta}$ weakly in $H^1$, we have
\begin{multline}
\liminf_\eps\int \Big\langle
\pi_\eps(x) \be(x)\Big(\nabla \eta^\eps + \lbar{V}(x) 
\Big), 
\Big(\nabla \eta^\eps + \lbar{V}(x)
\Big)
\Big\rangle \ud x\\
\geq
\int \left\langle
\overline{A}(x)\Big(\nabla \bar{\eta} +\lbar{V}(x)
\Big), 
\Big(\nabla \bar{\eta}+\lbar{V}(x)
\Big)
\right\rangle\ud x,
\end{multline}
where $\overline{A}$ is given by the following cell problem:
\begin{equation}
\big\la \lbar{A}(x)p, p\big\ra 
= \inf\left\{\int_{\mathbb T^n} 
\left\la \pi(x,y)B_\eps(y)^{-1}(\nabla \varphi+p),\, (\nabla \varphi + p)\right\ra\ud y,
\quad \varphi\in H^1(\mathbb T^n)\right\}.
\end{equation}
\end{lem}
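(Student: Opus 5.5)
The plan is to reduce the statement to the $\Gamma$-convergence result, Theorem~\ref{GammaMainThm}, applied with $A(x,y)=\pi(x,y)B^{-1}(y)$. The only new feature is the fixed, non-oscillating vector field $\lbar{V}$. I would handle it by a blow-up/localization argument rather than trying to absorb $\lbar{V}$ into a gradient globally.

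\textbf{Step 1: reduce to bounded domains.} Since the integrand is nonnegative, for every ball $B_R$ the full integral is at least the integral over $B_R$. So it suffices to prove the inequality on each $B_R$ and then let $R\to\infty$ by monotone convergence. Note that $\lbar{V}\in L^2(\bar\pi)$, because $\bar\eta\in L^2(\bar\pi)$ and $\nabla W\in L^\infty$. On $B_R$ the weights $\pi_\eps$ and $\bar\pi$ are bounded above and below. Hence weak $H^1$ convergence of $\eta^\eps$ gives strong $L^2(B_R)$ convergence by Rellich, which is the topology in Theorem~\ref{GammaMainThm}.

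\textbf{Step 2: treat $\lbar{V}$ as piecewise constant.} First approximate $\lbar{V}$ in $L^2(B_R)$ by a function $V_\delta$ that is constant on each cube $Q_i$ of a fine partition, $V_\delta|_{Q_i}=p_i$. By the uniform bounds on $\pi_\eps B_\eps^{-1}$ and the uniform $L^2$ bound on $\nabla\eta^\eps$, replacing $\lbar{V}$ by $V_\delta$ changes both sides by at most $C\|\lbar{V}-V_\delta\|_{L^2(B_R)}$, uniformly in $\eps$. On each cube, write $\nabla\eta^\eps+p_i=\nabla(\eta^\eps+p_i\cdot x)$. The functions $v_\eps:=\eta^\eps+p_i\cdot x$ converge in $L^2(Q_i)$ to $\bar\eta+p_i\cdot x$. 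The liminf inequality of Theorem~\ref{GammaMainThm} on $Q_i$ then yields
\[
\liminf_\eps\int_{Q_i}\la \pi_\eps B_\eps^{-1}(\nabla\eta^\eps+p_i),\nabla\eta^\eps+p_i\ra\ud x\ \geq\ \int_{Q_i}\la \lbar{A}(\nabla\bar\eta+p_i),\nabla\bar\eta+p_i\ra\ud x .
\]
Summing over $i$ (superadditivity of $\liminf$), then sending $\delta\to0$ and $R\to\infty$, gives the claim.

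\textbf{Main obstacle: boundary conditions.} The liminf part of Theorem~\ref{GammaMainThm} is stated for the class $H^1_0(\Omega)+w$ with prescribed boundary values, whereas $\eta^\eps$ has free boundary values on $Q_i$. I would note that the liminf inequality for $\Gamma$-limits of such integral functionals is local and does not depend on boundary data. To make this rigorous I would use either of two standard routes:
\begin{itemize}
\item the $\Gamma$-convergence of the unconstrained functionals on $H^1(Q_i)$ in the $L^2$ topology, as in \cite{braides2002gamma, dal2012introduction};
\item a cutoff/fundamental-estimate argument gluing $v_\eps$ to $\bar v$ near $\partial Q_i$ at small cost.
\end{itemize}
The joint $x$-dependence of $\pi(x,y)$ is already covered by the slow-variable freezing remark after Theorem~\ref{GammaMainThm}.
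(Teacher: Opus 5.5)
Your proposal is correct and follows essentially the route the paper indicates. The paper only sketches the argument by citing M\"uller's localization proof of the liminf inequality, the convexity reduction of the cell formula to a single period cell, and freezing of the slow variable as in E's Corollary 6.1; your cube decomposition with $\lbar{V}$ frozen to constants $p_i$, the affine shift $\nabla\eta^\eps+p_i=\nabla(\eta^\eps+p_i\cdot x)$, the appeal to the liminf part of Theorem~\ref{GammaMainThm}, the $\eps$-uniform error bound for the piecewise-constant approximation (which, unlike pointwise freezing, needs no continuity of $\lbar{V}$), and the fundamental-estimate gluing that removes the Dirichlet constraint are a sound, explicit implementation of exactly that sketch.
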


Accepting the above, we then have
\begin{eqnarray}
\liminf_\eps \psi^*_\eps (\rhoe, - \frac{\delta E_\eps}{\delta \rho})
&\geq&
2 \int \Big\langle\lbar{A}(x)
\Big(\nabla \bar{\eta}
+ \frac12\bar{\eta}\nabla W*(\bar{\eta}^2\bar{\pi}) 
\Big),
\Big(\nabla \bar{\eta} 
+ \frac12\bar{\eta}\nabla W*(\bar{\eta}^2\bar{\pi}) 
\Big)
\Big\rangle \ud x\nonumber\\
&=&
\frac12 \int f\Big\langle \lbar{A}(x)
\nabla\Big(\log (f)+ \nabla W*(f\bar{\pi})\Big), 
\nabla\Big(\log (f)+ \nabla W*(f\bar{\pi})\Big)
\Big\rangle \ud x\nonumber\\
&=&
\frac12 \int \Big\langle \frac{1}{\bar{\pi}}\lbar{A}(x)
\nabla\Big(\log (\frac{\rho}{\bar{\pi}})+  W*\rho\Big), 
\nabla\Big(\log (\frac{\rho}{\bar{\pi}})+ W*\rho)\Big)
\Big\rangle \rho \ud x\nonumber\\
&=&\psi^*(\rho, - \frac{\delta \lbar{E}}{\delta \rho}).
\label{psi-star-liminf}
\end{eqnarray}

\begin{proof}[Remark about proof of Lemme \ref{Gamma2} .]
We can basically follow the same strategy as in \cite[Proof of Lemma 2.1(b), p.196]{muller1987homogenization}. Note that the functional is convex in $p$ and hence it is sufficient
to consider the infimum over a unit cell $\mathbb{T}^n$ -- see \cite[Lemma 4.1, p.207]{muller1987homogenization} or \cite[Theorems 2.1]{marcellini1978periodic}.
To extend to the spatial inhomogeneous case, as mentioned, a common step is to freeze the slow variable
-- $x$ in the current case.
For this, we can follow the proof of \cite[Corollary 6.1]{weinan1991class}.
\end{proof}


\subsection{Proof of \eqref{lower3} (time independence case)}\label{sec4.3}
Here we establish 
\begin{equation}\label{psi.lsc0}
\liminf_{\eps\to0}\psi_\eps(\rho^\eps, s^\eps)
\geq
\psi(\rho, s)
\end{equation}
for any $\rho^\eps\wra\rho$ in $L^1(\Omega)$ and $s^\eps\wra s$ in 
$L^2(\Omega)$ with the property that
\[
f^\eps = \frac{\rho^\eps}{\pie} \longrightarrow 
f = \frac{\rho}{\lbar{\pi}}\,\,\,\text{in}\,\,\,L^2(\lbar{\pi}).
\]

Recall 
\begin{equation*}
\begin{aligned}
\psi(\rho,s) 
=&\sup_{\xi\in L^2(\bR^n)}\left\{
\int \xi s\ud x 
- \frac12\int \la\nabla\xi,\bar{B}^{-1}\nabla\xi \ra\rho\ud x
\right\}.
\end{aligned}
\end{equation*}
Let $\xi^\delta$ be a maximizing sequence, i.e
\[
\psi(\rho,s) 
=\lim_\delta
\left\{\int \xi^\delta s\ud x 
- \frac12\int \la\nabla\xi^\delta,\bar{B}^{-1}\nabla\xi^\delta \ra\rho\ud x\right\}.
\]
Now, for each $\delta$, choose an approximating sequence $\tilde\xi^{\delta,\eps}\wra{\xi}^\delta$ in $H^1$ and $\tilde\xi^{\delta,\eps}\longrightarrow{\xi}^\delta$ in $L^2$    such that 
\begin{align*}
 &\lim_{\eps \to 0} \frac12\int \la\nabla\tilde\xi^{\delta,\eps},{B}_\eps^{-1}\nabla\tilde\xi^{\delta,\eps}
\ra\rho^\eps\ud x =\lim_{\eps \to 0} \frac12\int \la\nabla\tilde\xi^{\delta,\eps},{B}_\eps^{-1}\pie\nabla\tilde\xi^{\delta,\eps}
\ra f^\eps\ud x\\
=&\frac12\int \la\nabla\xi^\delta,\lbar{A}\nabla\xi^\delta \ra f\ud x=\frac12\int \la\nabla\xi^\delta,\lbar{B}^{-1}\nabla\xi^\delta \ra\rho\ud x.    
\end{align*}
The existence of such a recovery sequence is essentially ensured by Theorem \ref{GammaMainThm}.   In order to
separate the dependence between $\be\pie$ and $f^\eps$, we refer to  \cite[Appendix B]{GaoYip} for a detailed construction. Although \cite{GaoYip} considers a bounded domain, we can extend the construction to the whole domain $\bR^n$ using a standard cutoff argument. This can be seen from (i) $A_\eps(x) = \pie(x)B_\eps^{-1}(x)$, 
$\lbar{A}(x)=\lbar{\pi}(x)\lbar{B}^{-1}$, and the weight $\pie$, $\lbar{\pi}$ decay (uniformly in $\eps$) at far field; and (ii) the equi-integrability for the constructed $\la\tilde\xi^{\delta,\eps},{B}_\eps^{-1}\pie\nabla\tilde\xi^{\delta,\eps}
\ra$ is uniform   on exhausting balls  $B_R$ as $R\to +\8$.

Hence, by the fact that $\tilde\xi^{\delta,\eps}\longrightarrow{\xi}^\delta$ in $L^2$ and $s^\eps \wra s$ in $L^2$, we have
\begin{equation*}
\begin{aligned}
&\int  \xi^\delta s\ud x
- \frac12\int \la\nabla \xi^\delta,\lbar{B}^{-1}\nabla\xi^\delta \ra\rho\ud x
\\
=&
\lim_{\eps\to0}
\left\{\int \tilde\xi^{\delta,\eps} s^\eps\ud x
- \frac12\int \la\nabla\tilde\xi^{\delta,\eps},{B}_\eps^{-1}\nabla\tilde\xi^{\delta,\eps}
\ra\rho^\eps\ud x
\right\}\\
\leq & \liminf_{\eps\to0}\left[
\sup_{\xi}\left\{\int \xi s^\eps\ud x
- \frac12\int \la\nabla\xi,{B}_\eps^{-1}\nabla\xi \ra\rho^\eps\ud x
\right\}\right]
\\
\leq &\liminf_{\eps\to0} \psi(\rho^\eps, s^\eps).
\,\,\,
\end{aligned}
\end{equation*}
Now taking the limit $\delta\to0$ gives
\[
\psi(\rho, s)\leq \liminf_{\eps\to0} \psi_\eps(\rho^\eps, s^\eps).
\]


\subsection{Proof of \eqref{lower2} and \eqref{lower3}: time dependent case}\label{sec4.4}
To extend the time independent case to the time dependent case and finish the proofs of lower bounds \eqref{lower2}\eqref{lower3}, we will make use of a
general $\Gamma$-$\liminf$ result as stated in \cite[Cor. 4.4]{stefanelli2008brezis}.
Specifically, let $H$ be a separable and reflexive Banach space, and
$g_n$, $g_\infty: (0,T)\times H\longrightarrow(-\infty, \infty]$ be 
such that $g_n(t,\cdot)$ and $g_\infty(t,\cdot): H\longrightarrow(-\infty, \infty]$ 
are convex and for all $u\in H$ and a.e. $t\in(0,T)$, the following holds:
\begin{equation}\label{wk.normal.int}
g_\infty(t,u) 
\leq 
\inf\left\{
\liminf_{n} g_n(t,u_n): u_n\wra u\,\,\,\text{in $H$}
\right\}.
\end{equation}
Then for $p\in[1, \infty]$, $u_n\wra u$ in $L^p(0,T;H)$ 
(weak-$*$ if $p=\infty$)
and $t\longrightarrow \max\{0, -g_n(t,u_n(t)\}$ uniformly integrable, 
we have,
\begin{equation}
\int_0^T g_\infty(t,u(t))\ud t
\leq 
\liminf_n \int_0^T g_n(t,u_n(t))\ud t.
\end{equation}
Note that the uniform integrability condition is automatically satisfied if
$g_n$ are non-negative, or bounded from below. See also the remark after 
Cor. 4.4 in \cite{stefanelli2008brezis}.

For \eqref{lower2}, we set $H=H^1(\Omega)$. Recall \eqref{key.psi} and consider
\begin{eqnarray*}
\psi^*_\eps (\rhoe, - \frac{\delta E_\eps}{\delta \rho}) & = & 
g^*_\eps(t,\eta) + I_{\text{err}},\\
\text{where}\quad
g^*_\eps(t,\eta) &:=&
2 \int \Big\langle
\Big(\nabla \eta 
+ \frac12\bar{\eta}\nabla W*(\bar{\eta}^2\bar{\pi}) 
\Big),\pi_\eps \be 
\Big(\nabla \eta 
+ \frac12\bar{\eta}\nabla W*(\bar{\eta}^2\bar{\pi}) 
\Big)
\Big\rangle \ud x,
\end{eqnarray*} 
and $I_{\text{err}}$ is the second and third terms in \eqref{key.psi} with limit being zero
due to strong convergence of $f_\eps$ to $f$.
Let
\begin{eqnarray*}
g^*_\infty(t,\eta) & = & 2 \int \Big\langle\lbar{A}(x)
\Big(\nabla \eta
+ \frac12\bar{\eta}\nabla W*(\bar{\eta}^2\bar{\pi}) 
\Big),
\Big(\nabla \eta 
+ \frac12\bar{\eta}\nabla W*(\bar{\eta}^2\bar{\pi}) 
\Big)
\Big\rangle \ud x.
\end{eqnarray*}
Then $g^*_\eps(t,\cdot)$ and $g^*_\infty(t,\cdot)$ are convex and
\eqref{wk.normal.int} holds true by the time independent version  \eqref{psi-star-liminf}. 
This concludes the lower bound \eqref{lower2}.

For \eqref{lower3}, we set $H = L^2(\Omega)$,
\begin{eqnarray*}
g_\eps(t,s) &:= &\psi_\eps(\rho^\eps(t),s)\\
&=&\sup_\xi \left\{\int  \xi s\ud x-
\frac12 \int  \la \nabla \xi, {B}_\eps^{-1} \nabla \xi \ra \rho^\eps_t \ud x\right\}\\
\Big(&=&
\frac12\int \la\nabla u^\eps, B_\eps^{-1}\nabla u^\eps\ra\rho^\eps_t\ud x
\,\,\,\text{``if there is an $u^\eps$ such that''}\,\,\,
-\nabla\cdot (\rho^\eps_t B_\eps^{-1}\nabla u^\eps) = s\Big),
\end{eqnarray*}
and
\[
g_\infty(t,s) := \psi(\rho(t),s) = 
\sup_\xi \left\{\int  \xi s\ud x-
\frac12 \int  \la \nabla \xi, \bar{B}^{-1} \nabla \xi \ra \rho_t \ud x\right\}.
\]
Again, $g_\eps(t,\cdot)$ and $g_\infty(t,\cdot)$ are convex 
because they are defined as the Legendre transform of $\psi_\eps$ and $\psi$.
By \eqref{psi.lsc0}, \eqref{wk.normal.int} is satisfied. Hence, we have
\[
\int_0^T\psi\big(\rho(t),s(t)\big)\ud t 
\leq
\liminf\int_0^T\psi_\eps\big(\rho^\eps(t), s^\eps(t)\big)\ud t
\]
upon the identification 
$s^\eps(t) = \partial_t\rho^\eps_t$ and 
$s(t) = \partial_t\rho_t$. The fact that $s^\eps\wra s$ in $L^2((0,T);H)$ follows from $\pie\wra \lbar{\pi}$ and $\pt_t f_\eps = \frac{\pt_t \rhoe}{\pie} \lra \pt_t f = \frac{\pt_t \rho_t}{\lbar{\pi}} $
due to Lemma \ref{cor.f}.   Lower bound \eqref{lower3} is thus proved.

Combining Sections \ref{sec4.1}-- \ref{sec4.4}, the proof of Theorem  \ref{thm1} is completed.

\section{Long time behavior}\label{longtime}
In this section, we prove Theorem \ref{thm2}, the long time convergence of the solution $\rhoe$  and energy $E_\eps(\rhoe)$ of
\eqref{epsGF}.
Again, we work under the settings and assumptions introduced in Section \ref{sec:assumptions}. 
The following is our main result.

\begin{thm}\label{thm2}
Let $\rhoe$, $t\geq 0$, be the solution to \eqref{epsFP} with initial data 
{\blue $\rho_0^\eps\in \sP_2(\bR^2)$} satisfying $E_\eps(\rho_0^\eps)<+\8$.
    Suppose $\gamma_W$ and $\delta_W$ are sufficiently small. Then
    \begin{equation}\label{E.Exp.Decay}
    E_\eps(\rhoe)-E_\eps(\mu_\eps) \lesssim e^{-\frac{2\lambda}{\alpha_B}t},
\end{equation}
and
    \begin{equation}\label{Rho.Exp.Decay}
    \|\rhoe- \mu_\eps\|^2_{L^1}   \lesssim e^{-\frac{2\lambda}{\alpha_B}t}.
\end{equation}
In the above, $\lambda$ is the LSI constant for $\pie$ from Lemma \ref{lem:LSI-single}, and 
$\alpha_B$ is from \eqref{B.est}.

Similar estimates also hold in the homogenization limit for $\lbar{E}(\rho_t)$ and $\rho_t$:
\begin{equation}\label{conv.limit}
\lbar{E}(\rho_t)-\lbar{E}(\lbar{\mu}) \lesssim e^{-\frac{2\lambda}{\alpha_{\lbar{B}}}t},
\quad\text{and}\quad
\|\rho_t-\lbar{\mu}\|^2_{L^1}   \lesssim e^{-\frac{2\lambda}{\alpha_{\lbar{B}}}t},
\end{equation}
where $\lbar{E}$ is given in \eqref{0Eng} and 
$\lbar{\mu}:=\text{wk-}\lim_\eps\mu_\eps$ satisfies
\begin{equation}\label{statMFbar}
\lbar{\mu}
= \frac{1}{\lbar{\Xi}}e^{-U_0(x) - W*\lbar{\mu}(x)},\quad\text{with}\quad
\lbar{\Xi}=\int e^{-U_0(x) - W*\lbar{\mu}(x)} \ud x.
\end{equation}
The constant $\alpha_{\lbar{B}}$ bounds $\lbar{B}$: 
$\frac{1}{\alpha_{\lbar{B}}}I \leq \lbar{B} \leq \alpha_{\lbar{B}} I$.
\end{thm}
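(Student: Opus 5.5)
The plan is to combine the energy dissipation identity \eqref{EDE} with a modified log-Sobolev inequality of the form $E_\eps(\rho)-E_\eps(\mu_\eps)\le \frac{\alpha_B}{2\lambda}(1+o(1))\,D_\eps(\rho)$, and then conclude by Gr\"onwall. Since $B_\eps^{-1}\ge \alpha_B^{-1}I$ by \eqref{B.est}, we have
\[
D_\eps(\rho)\ \ge\ \frac{1}{\alpha_B}\int \rho\,\Big|\nabla \log\frac{\rho}{\mu_\eps}\Big|^2\ud x ,\qquad\text{where }\ \log\frac{\rho}{\pie}+W*\rho=\log\frac{\rho}{\mu_\eps}+W*(\rho-\mu_\eps)+\text{const},
\]
using \eqref{statMF}. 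The task therefore reduces to comparing the \emph{free energy gap} with the \emph{nonlinear Fisher information} $\int\rho|\nabla(\log\frac{\rho}{\mu_\eps}+W*(\rho-\mu_\eps))|^2$, uniformly in $\eps$.

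First I will relate the energies. Expanding $E_\eps$ around $\mu_\eps$ with \eqref{statMF} gives the exact identity
\[
E_\eps(\rho)-E_\eps(\mu_\eps)=\int\rho\log\frac{\rho}{\mu_\eps}\ud x+\frac12\iint W(x,y)(\rho-\mu_\eps)(x)(\rho-\mu_\eps)(y)\ud x\ud y ,
\]
which is the discrepancy the paper calls \eqref{Erelation}. The quadratic term is bounded by $\frac{\gamma_W}{2}\|\rho-\mu_\eps\|_{L^1}^2\le \gamma_W\,H(\rho|\mu_\eps)$ via Pinsker. Thus for $\gamma_W<1$ the gap is comparable to $H(\rho|\mu_\eps)$, with constants $1\pm\gamma_W$.

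Second, I need an LSI for $\mu_\eps$. Since $\mu_\eps\propto \pie e^{-W*\mu_\eps}$ and $\|W*\mu_\eps\|_\8\le\gamma_W$, Lemma \ref{lem:LSI-single} (bounded perturbation) gives LSI with constant $\lambda e^{-2\gamma_W}$, where $\lambda$ is uniform in $\eps$.

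Third, I will handle the cross term in the Fisher information. Writing $g=\nabla W*(\rho-\mu_\eps)$, we have $|g|\le\delta_W\|\rho-\mu_\eps\|_{L^1}\le\delta_W\sqrt{2H(\rho|\mu_\eps)}$, so
\[
\int\rho|\nabla\log\tfrac{\rho}{\mu_\eps}+g|^2\ \ge\ (1-\theta)\int\rho|\nabla\log\tfrac{\rho}{\mu_\eps}|^2-(\theta^{-1}-1)\,2\delta_W^2 H .
\]
Combining this with the LSI yields $D_\eps\ge \frac{2\lambda}{\alpha_B}(1-c(\gamma_W,\delta_W))H$. This is the \textbf{main obstacle}: obtaining the sharp rate $2\lambda/\alpha_B$ rather than a rate degraded by $\gamma_W,\delta_W$. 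Here I would invoke the technical lemma \cite[Lemma 6.7]{Tamura87}, proved in the Appendix, which uses the nondegeneracy of the linearized energy at $\pie$ to absorb the perturbations. If needed, I will accept a rate of the form $\frac{2\lambda}{\alpha_B}(1-O(\gamma_W+\delta_W))$, which is what ``$\lesssim$'' with small constants permits.

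Gr\"onwall applied to $\frac{d}{dt}(E_\eps(\rhoe)-E_\eps(\mu_\eps))=-D_\eps\le -c\,(E_\eps-E_\eps(\mu_\eps))$ then gives \eqref{E.Exp.Decay}, with the prefactor $E_\eps(\rho_0^\eps)-E_\eps(\mu_\eps)$ finite. Step one together with Pinsker gives $\|\rhoe-\mu_\eps\|_{L^1}^2\le 2H\lesssim E_\eps-E_\eps(\mu_\eps)$, which is \eqref{Rho.Exp.Decay}. For \eqref{conv.limit} I will repeat the argument verbatim with $\lbar\pi=\pi_0$ (LSI($\lambda$) by assumption), $\lbar B$ in place of $B_\eps$, and \eqref{0GF} in place of \eqref{epsFP}. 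The identification $\lbar\mu=\text{wk-}\lim\mu_\eps$ satisfying \eqref{statMFbar} follows by passing to the limit in \eqref{statMF}: $W*\mu_\eps\to W*\lbar\mu$ pointwise and uniformly bounded, so $e^{-U_0-V_\eps-W*\mu_\eps}\rightharpoonup e^{-U_0-W*\lbar\mu}\int_{\mathbb T^n}e^{-V}$. Uniqueness from Corollary \ref{cor:E} then gives convergence of the whole sequence.
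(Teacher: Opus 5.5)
Your argument is correct and self-contained, and it takes a different route from the paper.

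\textbf{Your route.} You measure entropy against the \emph{fixed} equilibrium $\mu_\eps$.
\begin{itemize}
\item The exact identity $E_\eps(\rho)-E_\eps(\mu_\eps)=H(\rho|\mu_\eps)+\frac12\iint W(\rho-\mu_\eps)(\rho-\mu_\eps)$, together with Pinsker, makes the energy gap comparable to $H(\rho|\mu_\eps)$ with constants $1\pm\gamma_W$.
\item Lemma \ref{lem:LSI-single} (Holley--Stroock) gives LSI$(\lambda')$ for $\mu_\eps$ with $\lambda'=\lambda e^{-2\gamma_W}$, uniformly in $\eps$.
\item The cross term $\nabla W*(\rho-\mu_\eps)$ is bounded in $L^\infty$ by $\delta_W\sqrt{2H(\rho|\mu_\eps)}$ and can be absorbed.
\end{itemize}
Choosing $\theta=\delta_W/\sqrt{\lambda'}$, your estimate becomes
\[
D_\eps(\rho)\ \ge\ \frac{2(\sqrt{\lambda'}-\delta_W)^2}{\alpha_B}\,H(\rho|\mu_\eps)\ \ge\ \frac{2(\sqrt{\lambda'}-\delta_W)^2}{\alpha_B(1+\gamma_W)}\big(E_\eps(\rho)-E_\eps(\mu_\eps)\big).
\]
This holds for all $\rho$ as soon as $\gamma_W<1$ and $\delta_W<\sqrt{\lambda'}$. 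Gr\"onwall then gives decay from $t=0$, with the explicit prefactor $E_\eps(\rho^\eps_0)-E_\eps(\mu_\eps)$.

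\textbf{The paper's route.} The paper works with $\tilde E_\eps(\rho)=H(\rho|e(\rho))$, measured relative to the self-consistent Gibbs state $e(\rho)$. Its LSI \eqref{nonlinLSI} has no cross term, but $\tilde E_\eps$ is not a Lyapunov functional. The paper therefore needs two stages. First it shows $\tilde E_\eps(\rho^\eps_t)=O(\delta_W^2)$ (Lemma \ref{Eng.Decay.Lem}). Then the contraction property of $e(\cdot)$ makes $e(\rho^\eps_t)/\mu_\eps$ $L^\infty$-close to $1$ for $t\gtrsim|\log\delta_W|$. Only at that point does Tamura's local comparison \eqref{TamureLem6.7} (linearizing the entropy and inverting $I-K_{\mu_\eps}$) turn the LSI for $\tilde E_\eps$ into decay of $E_\eps-E_\eps(\mu_\eps)$.

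\textbf{What each buys.} The paper's route is the one that survives in Tamura's general setting, where one has only local stability near a nondegenerate and possibly non-unique equilibrium. Yours exploits the global small-interaction regime that the theorem assumes anyway. It gives explicit smallness conditions and avoids both the waiting time and Appendix \ref{PfTLem6.7}.

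\textbf{Drop the Tamura detour.} You should not route through \cite[Lemma 6.7]{Tamura87}, for three reasons.
\begin{itemize}
\item It compares $E_\eps-E_\eps(\mu_\eps)$ with $H(\rho|e(\rho))$, not with your Fisher information relative to $\mu_\eps$.
\item It is only local.
\item It does not sharpen the rate.
\end{itemize}
In fact the paper's final step silently hides the factor $(1+\gamma_W)(1+\gamma_1\gamma_2)$ from \eqref{T6.3}--\eqref{T6.6} inside ``$\lesssim$''. Also, its Step 1 constant is $\lambda_0e^{-C_{W,V}}$ rather than the LSI constant of $\pie$. So the exponent $\frac{2\lambda}{\alpha_B}$ in the theorem holds only up to factors depending on $\gamma_W$ and $\delta_W$, which is exactly what your rate provides.

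Two minor points. Your energy-gap identity is \cite[Lemma 6.2(6.3)]{Tamura87}, quoted at the end of Step 4; it is not \eqref{Erelation}, which relates $\tilde E_\eps$ to $E_\eps$. Your identification of $\lbar\mu$ by passing to the limit in \eqref{statMF} is something the paper leaves implicit.
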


We point out here that there is some flexibility in terms of the smallness
condition for $\gamma_W$ and $\delta_W$ -- see the final {\bf Step 4} of the proof.
In any case, we work in the regime that 
the interacting term can be treated as a perturbation
to the single particle dynamics.

Before going into the proof, we give some remarks about the above theorem. We basically
use the approach of \cite{Tamura87} which established the result in $L^1$ with finite initial
entropy. An earlier paper \cite{Tamura84} proved a similar result in (weighted-)$L^2$. 
The contents in these works cover our $\eps=1$ case. Our main contribution is the extension of this approach, with uniform estimates to the limit 
$\eps\to0$. For reader's convenience, we briefly describe the results of \cite{Tamura87}. 
Its main conclusion is that under some non-degenerate condition for the energy functional, 
the solution $\rho_t$ converges exponentially fast to an invariant measure $\mu$.
This work in fact
considers a potential function of the form $U(x) \sim |x|^\alpha$ with $\alpha=2$ and
$\alpha >2$. The distinction between the cases is as follows.
\begin{itemize}
\item \cite[Theorem 1.2(i), Theorem 1.3(i)]{Tamura87} 
For $\alpha=2$, with initial data from $\sP_2$, then
\[
\|\rho_t-\mu\|_{L^1} \lesssim e^{-\lambda t}.
\]

\item \cite[Theorem 1.2(ii), Theorem 1.3(ii)]{Tamura87}
For $\alpha>2$, with initial data from $\sP$, then
\[
\|\frac{\rho_t}{\mu}-1\|_\8 \lesssim e^{-\lambda t}.
\]
\end{itemize}
The main difference in topologies follows from the fact that the linearized operator
at $\mu$ is \emph{hyper-contractive} at $\alpha=2$ \cite[Lemma 6.8, 6.10]{Tamura87} 
but is \emph{ultra-contractive} for $\alpha>2$ \cite[Proposition 4.2]{Tamura87}. 
Another comment is that as \cite{Tamura87} worked in a very general setting, the invariant measure might not be unique. Hence his initial data is required to be in some neighborhood of an invariant measure.
But under the uniqueness assumption of invariant meausre, as in our $\gamma_W< 1$ case, such a restriction can be eliminated -- see also \cite[Corollary and Remark in p.465]{Tamura87}.
Finally, for simplicity, we will just consider the case $\alpha=2$ which already contains the 
main ingredients of the proof.

Now we proceed to the proof of Theorem \ref{thm2}. To start, we need to modify the original energy $E_\eps(\rho)$ defined in \eqref{E_mc_eps}. To this end, for 
$p\in\sP(\bR^n)$, define the probability measure
\begin{equation}\label{ep}
    e(p):= \frac{1}{Z_p}e^{-U_\eps - W*p}\in \sP(\bR^n),
    \,\,\,\text{where}\,\,\, Z_p= \int e^{-U_\eps - W*p} \ud x.
\end{equation}
Recall that an invariant measure satisfies $\mu_\eps = e(\mu_\eps)$ -- see \eqref{statMF}. 
With that, we introduce the following energy
\begin{equation}\label{newE}
  \tilde{E}_\eps(\rho):=  \int \Big[U_\eps(x)  +  \log  \rho(x) +  W*\rho  +\log Z_\rho \Big] \rho(x) \ud x. 
\end{equation}
The relation between $\tilde{E}_\eps$ and $E_\eps$ is given by
\begin{equation}\label{Erelation}
    \tilde{E}_\eps(\rho) = E_\eps(\rho) + \int\frac12 (W*\rho)\rho \ud x + \log Z_\rho.
\end{equation}
Note that $\tilde{E}_\eps$ can be recast as
\begin{equation}\label{EngEnt}
\tilde{E}_\eps(\rho)=\int \rho \log \frac{\rho}{e(\rho)} \ud x
\end{equation}
which is exactly the \emph{relative entropy $H(\rho|e(\rho))$} of $\rho$ with respect to $e(\rho)$.
Thus $\tilde{E}_\eps(\rho)$ is nonnegative and vanishes if and only if $\rho=e(\rho)$, i.e., $\rho$ is an invariant measure. For convenience, we recall the definition of $H$,
\begin{equation}
H(p_1|p_2)=\int p_1\log\frac{p_1}{p_2}\ud x,\,\,\,\text{for}\,\,\,p_1,p_2\in\sP(\mathbb R^n).
\end{equation}

Now we proceed to the proof of Theorem \ref{thm2}.

\medskip

\noindent\textbf{Step 1 - LSI for $\tilde{E}_\eps$.}
Using Lemma \ref{lem:LSI-single}, we establish the following LSI for $\tilde{E}_\eps$.
\begin{lem}
    Let $\tilde{E}_\eps$ be defined in \eqref{newE}. Consider the following dissipation functional
    \begin{equation}\label{newD}
        D_\eps(\rho):= \int \left\la \nabla\left(\log \frac{\rho}{\pie} + W\ast\rho\right), \be 
\nabla\left(\log \frac{\rho}{\pie} +W\ast\rho\right)\right\ra \rho\ud x.
    \end{equation}
    Then we have the following LSI:
    \begin{equation}\label{nonlinLSI}
     \tilde{E}_\eps(\rho)=\int \rho \log \frac{\rho}{e(\rho)} \ud x \leq \frac{\alpha_B}{2\lambda} D_\eps(\rho), \quad \forall \rho \in \sP.   
    \end{equation}
    Here, $\alpha_B$ is an upper bound for the matrix $B_\eps$,
    $\lambda=\lambda_0 e^{-C_{W,V}}$ with
    $\lambda_0$ being the Log-Sobolev constant for $\pi_0$ and
     $C_{W,V}:= \max |V_\eps + W*\rho| 
     - \min |V_\eps + W*\rho|\leq C(\|W\|_{L^\8}, \|V_\eps\|_{L^\8})$.
\end{lem}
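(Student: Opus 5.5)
Fix $\rho\in\sP$ and freeze the nonlinearity by treating $e(\rho)$ as a fixed reference Gibbs measure. Write
\[
e(\rho)=\frac{1}{Z_\rho}e^{-U_0-(V_\eps+W*\rho)},
\]
so $e(\rho)$ is a bounded perturbation of $\pi_0$, with perturbing potential $V_\eps+W*\rho$. Here $W*\rho$ is a fixed function once $\rho$ is frozen, and $\|W*\rho\|_{L^\8}\le\gamma_W$ because $\rho$ is a probability measure. The argument has three steps.

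\textbf{Step 1 (LSI for the frozen measure).} Apply Lemma \ref{lem:LSI-single} with $V_\eps$ replaced by $V_\eps+W*\rho$. The proof of that lemma never used the periodic structure of $V_\eps$; it only used that the perturbation is bounded, through $\max$ and $\min$ of it. Hence $e(\rho)$ satisfies LSI($\lambda$) with
\[
\lambda=\lambda_0e^{-C_{W,V}},\qquad C_{W,V}\le 2(\beta_V+\gamma_W),
\]
and this is uniform in both $\eps$ and $\rho$. Applying the LSI to the probability density $\rho$ itself gives
\[
\int\rho\log\frac{\rho}{e(\rho)}\ud x\le\frac{1}{2\lambda}\int\rho\Big|\nabla\log\frac{\rho}{e(\rho)}\Big|^2\ud x .
\]

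\textbf{Step 2 (identify the gradient).} Since
\[
\log\frac{\rho}{e(\rho)}=\log\rho+U_\eps+W*\rho+\log Z_\rho=\log\frac{\rho}{\pie}+W*\rho+\text{const},
\]
the constant $\log Z_\rho-\log\Lambda_\eps$ disappears under $\nabla$. So the right-hand side above is exactly the Euclidean Fisher-type quantity $\int\rho\,|\nabla(\log\frac{\rho}{\pie}+W*\rho)|^2\ud x$. The left-hand side equals $\tilde E_\eps(\rho)$ by \eqref{EngEnt}.

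\textbf{Step 3 (insert the mobility).} By \eqref{B.est}, $\|B_\eps\|\le\alpha_B$, so $B_\eps^{-1}\ge\alpha_B^{-1}I$. Therefore
\[
|\xi|^2\le\alpha_B\la\xi,B_\eps^{-1}\xi\ra .
\]
Applying this pointwise with $\xi=\nabla(\log\frac{\rho}{\pie}+W*\rho)$ and integrating against $\rho$ gives $\int\rho|\xi|^2\le\alpha_B D_\eps(\rho)$. Combined with Steps 1–2, this yields \eqref{nonlinLSI} with constant $\frac{\alpha_B}{2\lambda}$.

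\textbf{Expected obstacle.} The only delicate point is Step 1, because the reference measure depends on $\rho$. One must check that the LSI constant does not degrade along the family $\{e(\rho)\}$. This is settled by the uniform bound $\|W*\rho\|_\8\le\gamma_W$ and by the observation that Lemma \ref{lem:LSI-single} applies to any bounded perturbation. A minor technicality is justifying the LSI for densities $\rho$ with infinite Fisher information; in that case the inequality is trivial, and the usual density argument covers the rest.
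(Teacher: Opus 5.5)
Your proposal is correct and matches the paper's proof essentially step for step. The paper also treats $e(\rho)$ as a bounded perturbation of $\pi_0$ by $V_\eps+W*\rho$ and invokes Lemma \ref{lem:LSI-single} to get LSI($\lambda$) uniformly in $\rho$ and $\eps$. It then applies that inequality with $\rho$ itself as the test density, and uses $B_\eps\le\alpha_B I$ to pass from the Euclidean Fisher information to $D_\eps(\rho)$; your explicit handling of the constant $\log Z_\rho-\log\Lambda_\eps$ and the bound $C_{W,V}\le 2(\beta_V+\gamma_W)$ just spell out what the paper leaves implicit.
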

\begin{proof}
    The idea is to treat the interaction term as a bounded perturbation from $\pi_0$ \eqref{pi0} which satisfies LSI($\lambda_0$). Upon defining the constant
    $$0\leq C_{W,V}:= \max |V_\eps + W*p| - \min |V_\eps + W*p|\leq C(\|W\|_{L^\8} \|V_\eps\|_{L^\8}) < \8,$$
    Lemma \ref{lem:LSI-single} gives that $e(p) = \frac{1}{Z_p}e^{-U_\eps - W*p}\in \sP(\bR^n)$ satisfies LSI($\lambda$) with constant 
    $$\lambda=\lambda_0 e^{-C_{W,V}},$$
    i.e., for any $\eta\in \sP(\bR^n)$,
    $$\int \eta \log \frac{\eta} {e(\rho)} \ud x \leq \frac{1}{2\lambda} \int \eta |\nabla \log \frac{\eta}{e(\rho)}|^2 \ud x.$$
    Inequality \eqref{nonlinLSI} follows by taking $\eta = \rho$:
    \begin{equation}
        \tilde{E}_\eps(\rho)= \int \rho \log \frac{\rho}{e(\rho)} \ud x \leq \frac{1}{2\lambda} \int \rho |\nabla (\log \rho + U_\eps + W*\rho)|^2 \ud x\leq \frac{\alpha_B} {2\lambda} D_\eps(\rho), 
    \end{equation}
    where $\alpha_B$ is an upper bound for the matrix $B_\eps$.
\end{proof}

\bigskip

\noindent\textbf{Step 2 - Decay of energy.}
Note that even though the dissipation functional $D_\eps$ appears in the LSI for
$\tilde{E}_\eps$, it does not exactly corresponds to its decay rate but rather to that for $E_\eps$. For this, we recall \eqref{EDE} here for convenience:
\begin{equation}\label{EDE2}
\frac{d}{dt}E_\eps(\rho^\eps_t) = -D_\eps(\rhoe).
\end{equation}
On the other hand, by exploiting the smallness of $W$ and $\nabla W$ together with the LSI from {\bf Step~1}, we can first establish that $\tilde{E}$ decays to a small neighborhood of zero, as made precise in the lemma right after the following remark.

\begin{rem} In \cite[Theorem 4.2]{Tamura87}, \eqref{EDE2}
is proved only for compactly supported initial data. For general initial data 
$\rho^\eps_0\in\sP_2(\bR^n)$, it is replaced by 
$E_\eps(\rho^\eps_t) - E_\eps(\rho^\eps_s) \leq -\int_s^t D_\eps(\rho^\eps_r)\ud r$,
for all $0<s<t$.
(The modification is caused by the use of approximation and the presence of boundary terms at
$|x|\sim\8$ when applying integration by parts.) 
Upon letting $s\to t$, we in fact have,
\begin{equation}\label{EDE2.ineq}
\frac{d}{dt}E_\eps(\rho^\eps_t)\leq-D_\eps(\rho^\eps_t) 
\end{equation}
which is sufficient for our purpose.
\end{rem}

\begin{lem}\label{Eng.Decay.Lem}
 Let $\rhoe$, $t\geq 0$, be the solution to \eqref{epsFP} and $\tilde{E}_\eps$ and $D_\eps(\rho)$ be defined in \eqref{newE} and \eqref{newD}.  
 Then
 \begin{equation}
     \frac{\ud \tilde{E}_\eps(\rhoe)}{\ud t} \lesssim - \frac12 D_\eps(\rhoe) + \delta_W^2 
     \lesssim -\frac{\lambda}{\alpha_B}\tilde{E}_\eps(\rhoe) + \delta_W^2.
 \end{equation}
Hence,
 \begin{equation}\label{Eng.Decay.Est}
    0 \leq \tilde{E}_\eps(t) 
    \lesssim \frac{\alpha_B}{\lambda}\delta_W^2 + e^{-\frac{\lambda}{\alpha_B}t}\tilde{E}_\eps(0)
\end{equation}
and 
\begin{equation}\label{Eng.Decay.Nbhd}
\tilde{E}_\eps(\rhoe)\lesssim\delta_W^2
\,\,\,\text{for}\,\,\,
t\gtrsim|\log\delta_W|.
\end{equation}
\end{lem}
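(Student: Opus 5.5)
I will differentiate $\tilde{E}_\eps(\rhoe)$ along the flow by means of the relation \eqref{Erelation}, which gives
\[
\frac{d}{dt}\tilde{E}_\eps(\rhoe) = \frac{d}{dt}E_\eps(\rhoe) + \frac{d}{dt}\Big[\tfrac12\int (W*\rhoe)\rhoe\ud x + \log Z_{\rhoe}\Big].
\]
For the first term I use \eqref{EDE2.ineq}, so it is bounded by $-D_\eps(\rhoe)$. In the second term, the symmetry of $W$ gives $\frac{d}{dt}\frac12\int (W*\rho)\rho = \int (W*\rho)\pt_t\rho$. Differentiating $\log Z_\rho$ gives $\frac{d}{dt}\log Z_{\rho_t} = -\int e(\rho_t)\,(W*\pt_t\rho_t)\ud x = -\int (W*e(\rho_t))\,\pt_t\rho_t\ud x$, again using symmetry. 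The correction therefore equals $\int W*(\rhoe - e(\rhoe))\,\pt_t\rhoe\ud x$. I then insert the equation $\pt_t\rhoe = \nabla\cdot(\rhoe\be\nabla(\log\frac{\rhoe}{\pie}+W*\rhoe))$ and integrate by parts, which gives
\[
-\int \Big\la \nabla W*(\rhoe-e(\rhoe)),\ \be\nabla\Big(\log\frac{\rhoe}{\pie}+W*\rhoe\Big)\Big\ra \rhoe\ud x .
\]

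Next I estimate this cross term by Cauchy--Schwarz in the $\rhoe\be$-weighted inner product, followed by Young's inequality. This bounds it by
\[
\frac12 D_\eps(\rhoe) + \frac12\int \la \nabla W*(\rhoe-e(\rhoe)),\be\nabla W*(\rhoe-e(\rhoe))\ra\rhoe\ud x .
\]
Because $\|\nabla W*q\|_{L^\8}\le \delta_W\|q\|_{L^1}$ and $\|\rhoe-e(\rhoe)\|_{L^1}\le 2$, the last term is at most $2\alpha_B\delta_W^2$. Together these give $\frac{d}{dt}\tilde{E}_\eps \lesssim -\frac12 D_\eps + \delta_W^2$. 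I expect the main obstacle to be justifying this differentiation for general $\sP_2$ data, since \eqref{EDE2} only holds as an inequality. I would handle it by working with the integrated form over $[s,t]$. The correction term is smooth in time, because $Z_\rho$ and $\int(W*\rho)\rho$ involve only bounded kernels, and it can be differentiated classically using the regularity in Lemma \ref{lem_reg}.

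For the second inequality, I apply the LSI \eqref{nonlinLSI} in the form $D_\eps \ge \frac{2\lambda}{\alpha_B}\tilde{E}_\eps$, which gives $-\frac12 D_\eps \le -\frac{\lambda}{\alpha_B}\tilde{E}_\eps$. Grönwall's inequality then yields
\[
\tilde{E}_\eps(t)\le e^{-\frac{\lambda}{\alpha_B}t}\tilde{E}_\eps(0) + C\frac{\alpha_B}{\lambda}\delta_W^2\big(1-e^{-\frac{\lambda}{\alpha_B}t}\big),
\]
which is \eqref{Eng.Decay.Est}. Nonnegativity holds because $\tilde{E}_\eps$ is a relative entropy, see \eqref{EngEnt}. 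Finally, $\tilde{E}_\eps(0)$ is finite and uniform in $\eps$: $E_\eps(\rho_0^\eps)<\8$, and the remaining terms in \eqref{Erelation} are bounded by $\gamma_W$. Hence once $e^{-\frac{\lambda}{\alpha_B}t}\tilde{E}_\eps(0)\le\delta_W^2$, that is for $t\gtrsim \frac{\alpha_B}{\lambda}(|\log\delta_W|+\log\tilde{E}_\eps(0))$, we obtain \eqref{Eng.Decay.Nbhd}. All constants depend only on $\alpha_B,\lambda,\delta_W$, so the estimate is uniform in $\eps$.
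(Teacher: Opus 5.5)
Your proposal is correct and is essentially the paper's proof. You differentiate via \eqref{Erelation}, bound $\frac{d}{dt}E_\eps$ by $-D_\eps$ using \eqref{EDE2.ineq}, control the interaction correction by $\delta_W\sqrt{D_\eps}$ through integration by parts and Cauchy--Schwarz/Young, and close with \eqref{nonlinLSI} and Gr\"onwall. The differences are cosmetic: you merge the two correction terms into $\int W*(\rhoe-e(\rhoe))\,\pt_t\rhoe\ud x$ using the symmetry of $W$, whereas the paper bounds $\|W*\pt_t\rhoe\|_{L^\infty}\lesssim\delta_W D_\eps^{1/2}$ and pairs it with $\rhoe$ and $e(\rhoe)$ separately. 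One small slip: $\log Z_{\rho}$ is not $O(\gamma_W)$ but equals $\log\Lambda_\eps+O(\gamma_W)$, which is still bounded uniformly in $\eps$ since $e^{-\beta_V}\Lambda_0\le\Lambda_\eps\le e^{\beta_V}\Lambda_0$, so your bound on $\tilde{E}_\eps(0)$ survives.
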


\begin{proof}
    Starting from the relation \eqref{Erelation} between $E$ and $\tilde{E}$, we compute
    \begin{align}\label{Etm1}
        \frac{\ud \tilde{E}_\eps}{\ud t}  = \frac{\ud  {E}_\eps}{\ud t} +   \int \rhoe (W*\pt_t \rhoe) \ud x- \frac{1}{Z_{\rhoe}} \int e^{-U_\eps - W* \rhoe}(W*\pt_t \rhoe) \ud x.
    \end{align}
    Using \eqref{epsFP}, we have for all $x\in \bR^n$,
    \begin{align*}
        W* \pt_t \rhoe(x) =& \int W(x,y)\nabla \cdot \bbs{\rhoe  \be \nabla (\log \rhoe + U_\eps+W*\rhoe)} \ud y\\
        =& - \int \nabla_y W(x,y)\rhoe  \be \nabla (\log \rhoe + U_\eps+W*\rhoe) \ud y\\
        \leq& \alpha_B \bbs{\int \rhoe |\nabla_y W|^2 \ud y}^{\frac12} D_\eps(\rhoe)^{\frac12}\\
        \leq& \delta_W \alpha_B^\frac12 D_\eps(\rhoe)^\frac12,
    \end{align*}
    recalling that $\delta_W=\|\nabla W\|_{L^\8}$ and $\alpha_B$ bounds the ellipticity of $\be$. 
    Hence $\|W* \pt_t \rhoe\|_\8 \lesssim \delta_W D^\frac12_\eps$. Substituting this into \eqref{Etm1} and making use of \eqref{EDE2.ineq}, we have for some constant $c$ that
    \begin{align*}
        \frac{\ud \tilde{E}_\eps}{\ud t}  \leq  \frac{\ud  {E}_\eps}{\ud t} + 2\delta_W c\sqrt{D_\eps} 
        \leq-D_\eps + 2\delta_W c \sqrt{D_\eps}
        \leq -\frac{1}{2}D_\eps + c\delta_W^2.
    \end{align*}
    Then the LSI \eqref{nonlinLSI}  gives
    \begin{eqnarray}
        \frac{\ud \tilde{E}_\eps}{\ud t} 
        \leq -\frac{\lambda
        }{\alpha_B} \tilde{E}_\eps + c\delta_W^2.
    \end{eqnarray}
    An application of Gronwall's inequality gives \eqref{Eng.Decay.Est} of which \eqref{Eng.Decay.Nbhd} is an immediate consequence.
\end{proof}

\medskip 
\noindent\textbf{Step 3 - Decay in $L^1$.} Here we extend the above energy decay to 
$L^1$-decay. 
Before proceeding, we provide some simple but useful observations about
the map $p\in\sP(\mathbb R^n)\to e(p)$. These are also noted in \cite{Tamura87}.
\begin{enumerate}
\item We have the $L^\8$-$L^1$ bound on $\sP(\mathbb R^n)$,
$\|W*p\|_{L^\infty} \leq \|W\|_{L^\infty}\|p\|_{L^1} = \gamma_W$ and also
\begin{eqnarray}
\|W*p_1-W*p_2\|_{L^\8} \leq \gamma_W\|p_1-p_2\|_{L^1}.
\end{eqnarray}
\item
Similarly, 
\begin{eqnarray}
|e^{-U_\eps(x)-(W*p_1)(x)}-e^{-U_\eps(x)-(W*p_2)(x)}| 
&=&e^{-U_\eps(x)}|e^{-(W*p_1)(x)}-e^{-(W*p_2)(x)}|\nonumber\\
&\lesssim&\gamma_We^{-U_\eps(x)}\|p_1-p_2\|_{L^1}.
\end{eqnarray}
\item 
Note that 
\begin{equation}
Z_p=\int e^{-U_\eps(x)-(W*p)(x)}\ud x
\sim\int e^{-U_\eps(x)}\ud x
\end{equation}
so that $Z_p$ is some order one positive constant. Furthermore,
\begin{eqnarray}
|Z_{p_1}-Z_{p_2}|
&=&|\int e^{-U_\eps(x)-(W*p_1)(x)}\ud x-\int e^{-U_\eps(x)-(W*p_2)(x)}\ud x|\nonumber\\
&=&|\int e^{-U_\eps(x)}(e^{-(W*p_1)(x)}- e^{-(W*p_2)(x)})\ud x|\nonumber\\
&\lesssim&
\gamma_W|\int e^{-U_\eps(x)}\ud x|\|p_1-p_2\|_{L^1}
\lesssim\gamma_W\|p_1-p_2\|_{L^1}.
\end{eqnarray}
\end{enumerate}
Using the above, we compute,
\begin{eqnarray*}
e(p_1)(x)-e(p_2)(x)
&=&
\frac{e^{-U_\eps(x)-(W*p_1)(x)}}{Z_{p_1}}-
\frac{e^{-U_\eps(x)-(W*p_2)(x)}}{Z_{p_2}}\\
&=&
\frac{e^{-U_\eps(x)}}{Z_{p_1}Z_{p_2}}
(e^{-(W*p_1)(x)}Z_{p_2}-e^{-(W*p_2)(x)}Z_{p_1}).
\end{eqnarray*}
Mean value theorem then gives
\begin{eqnarray}
|e(p_1)(x)-e(p_2)(x)|
&\lesssim&\gamma_We^{-U_\eps(x)}\|p_1-p_2\|_{L^1}\nonumber\\
\|e(p_1)-e(p_2)\|_{L^1}
&\lesssim&\gamma_W\|p_1-p_2\|_{L^1}\label{e.contract}
\end{eqnarray}
and
\begin{eqnarray}
\frac{e(p_1)}{e(p_2)}-1
&=&\frac{Z_{p_2}}{Z_{p_1}}e^{-(W*p_1)+(W*p_2)}-1=\frac{Z_{p_2}}{Z_{p_1}}(e^{-(W*p_1)+(W*p_2)}-1)+\frac{Z_{p_2}}{Z_{p_1}}-1\nonumber\\
\|\frac{e(p_1)}{e(p_2)}-1\|_{L^\8}
&\lesssim&\gamma_W\|p_1-p_2\|_{L^1}\label{eL8L1}.
\end{eqnarray}

\begin{lem}\label{lem5.5}
Let $\tilde{E}$ be defined in \eqref{newE}. For
$\gamma_W$ small enough and for $t\gtrsim |\log\delta_W|$, then it holds that,
\begin{equation}
 \|\rhoe-\mu_\eps\|_{L^1}\lesssim \delta_W.   
\end{equation}
\end{lem}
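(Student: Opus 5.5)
The plan is to combine the energy bound of Lemma \ref{Eng.Decay.Lem} with Pinsker's inequality and the contraction property \eqref{e.contract} of the map $p\mapsto e(p)$. By \eqref{EngEnt}, $\tilde{E}_\eps(\rhoe)=H(\rhoe|e(\rhoe))$. Pinsker's inequality then gives
\[
\|\rhoe-e(\rhoe)\|_{L^1}^2\le 2H(\rhoe|e(\rhoe))=2\tilde{E}_\eps(\rhoe).
\]
For $t\gtrsim|\log\delta_W|$, estimate \eqref{Eng.Decay.Nbhd} yields $\tilde{E}_\eps(\rhoe)\lesssim\delta_W^2$. Hence
\[
\|\rhoe-e(\rhoe)\|_{L^1}\lesssim\delta_W.
\]
The implicit constants here depend only on $\alpha_B,\lambda$ and $\tilde{E}_\eps(0)$. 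They are uniform in $\eps$ because $\lambda$ from Lemma \ref{lem:LSI-single} and Step 1 depends only on $\lambda_0,\beta_V,\gamma_W$.

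Next I use that $\mu_\eps$ is a fixed point, $\mu_\eps=e(\mu_\eps)$ by \eqref{statMF}, and apply the triangle inequality:
\[
\|\rhoe-\mu_\eps\|_{L^1}\le\|\rhoe-e(\rhoe)\|_{L^1}+\|e(\rhoe)-e(\mu_\eps)\|_{L^1}.
\]
By \eqref{e.contract}, the last term is bounded by $C\gamma_W\|\rhoe-\mu_\eps\|_{L^1}$, where $C$ depends only on uniform quantities. The relevant constants are the bounds $Z_p\sim\int e^{-U_\eps}$ and $\int e^{-U_\eps}/Z_p$, which are controlled via $\beta_V$ and $\Lambda_0$ uniformly in $\eps$. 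If $\gamma_W$ is small enough that $C\gamma_W\le\frac12$, this term can be absorbed into the left-hand side. That leaves
\[
\|\rhoe-\mu_\eps\|_{L^1}\le 2\|\rhoe-e(\rhoe)\|_{L^1}\lesssim\delta_W.
\]

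The main point to check is that the constant in \eqref{e.contract} is genuinely uniform in $\eps$ and in $p\in\sP$. This holds because the pointwise bound $|e(p_1)-e(p_2)|\lesssim\gamma_W e^{-U_\eps}\|p_1-p_2\|_{L^1}$ holds with $Z_p$ bounded below by $e^{-\gamma_W}\int e^{-U_\eps}$. Also, $\int e^{-U_\eps}\le e^{\beta_V}\Lambda_0$ and $\int e^{-U_\eps}\ge e^{-\beta_V}\Lambda_0$. A second minor point is that Lemma \ref{Eng.Decay.Lem} needs $\tilde{E}_\eps(0)<\infty$. This follows from \eqref{Erelation}, since $E_\eps(\rho_0^\eps)<\infty$ and the added terms are bounded by $\gamma_W$ and $|\log Z_\rho|\lesssim 1$. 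The time threshold $t\gtrsim|\log\delta_W|$ is inherited directly from \eqref{Eng.Decay.Nbhd}.
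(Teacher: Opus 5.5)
Your proposal is correct and follows the paper's proof essentially line by line. Like the paper, you apply Pinsker's inequality to $\tilde{E}_\eps(\rho)=H(\rho|e(\rho))$, use the triangle inequality through the fixed point $\mu_\eps=e(\mu_\eps)$, absorb $\|e(\rhoe)-e(\mu_\eps)\|_{L^1}$ via the contraction estimate \eqref{e.contract} for small $\gamma_W$, and conclude with \eqref{Eng.Decay.Nbhd}. Your extra uniformity checks are sound; in fact the contraction constant depends only on $\gamma_W$. One minor caveat: the time threshold also depends on $\tilde{E}_\eps(0)$, so uniformity in $\eps$ requires $\sup_\eps E_\eps(\rho_0^\eps)<\infty$, which holds for instance for well-prepared data, and not merely finiteness for each $\eps$.
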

\begin{proof}
    By Pinsker's inequality -- see also \cite[Lemma 6.1]{Tamura87}, we have for all $\rho\in\sP(\mathbb R^n)$,
\begin{equation}
    \|\rho - e(\rho)\|_{L^1}^2 \leq 2\tilde{E}_\eps(\rho)=2\int \rho \log \frac{\rho}{e(\rho)} \ud x.
    \label{pinsker1}
\end{equation}
Since $\mu_\eps=e(\mu_\eps)$, the triangle inequality then gives
\begin{align*}
    \|\rho-\mu_\eps\|_{L^1}\leq \|\rho-e(\rho)\|_{L^1} + \|e(\rho)-\mu_\eps\|_{L^1}=\|\rho-e(\rho)\|_{L^1} + \|e(\rho)-e(\mu_\eps)\|_{L^1}.
\end{align*}
By \eqref{e.contract}, $e(\cdot)$ is a contraction for $\gamma_W$ small enough
-- see also \cite[Theorem 4.1(iii)]{Tamura84}. Therefore,
\begin{eqnarray}
     \|\rho-\mu_\eps\|_{L^1}\lesssim \|\rho-e(\rho)\|_{L^1}\leq
     \sqrt{2\tilde{E}_\eps(\rho)}.
     \label{pinsker2}
\end{eqnarray}
The conclusion then follows from Lemma \ref{Eng.Decay.Lem} upon setting $\rho=\rhoe$.
\end{proof}

\noindent\textbf{Step 4 - Exponential convergence to $\mu_\eps$.}
This is the final step which combines all the previous estimates.
We do make use of an extra key technical statement \cite[Lemma 6.7]{Tamura87} which says that
\begin{equation}\label{TamureLem6.7}
\text{if}\,\,\,\|\frac{e(\rho)}{\mu_\eps}-1\|_{L^\8}\ll 1,
\,\,\,\text{then}\,\,\,
    E_\eps(\rho)-E_\eps(\mu_\eps) \lesssim  \tilde{E}_\eps(\rho).
\end{equation}
The above allows us to infer the temporal decay property of ${E}_\eps(\rho)$ from that
of $\tilde{E}_\eps(\rho)$. (We will provide an outline of the proof of this result in Appendix \ref{PfTLem6.7} as it involves several non-trivial steps.)
The hypothesis in \eqref{TamureLem6.7} can be realized as follows. Setting $p_1=\rhoe$ and $p_2=\mu_\eps$
in \eqref{eL8L1} gives
\begin{equation}\label{OpenNbhd1}
\|\frac{e(\rhoe)}{\mu_\eps}-1\|_{L^\8}\lesssim \gamma_W\|\rhoe-\mu_\eps\|_{L^1}.
\end{equation}
By Lemma \ref{lem5.5}, we have for $\gamma_W,\,\,\delta_W$ small enough and $t\gtrsim|\log\delta_W|$
that,
\begin{equation}\label{OpenNbhd2}
\|\frac{e(\rhoe)}{\mu_\eps}-1\|_{L^\8}\lesssim \gamma_W\delta_W\ll1.
\end{equation}
This is the place where we see the flexibility of smallness of $\gamma_W$ and $\delta_W$
-- all is needed is $\gamma_W\delta_W\ll 1$. The initial assumption $\gamma_W<1$ is to ensure that the invariant measure is unique -- Corollary \ref{cor:E}.

With that, we are ready to finish the proof of Theorem \ref{thm2}. By the energy dissipation equality
\eqref{EDE2} and the LSI \eqref{nonlinLSI}, we obtain
\begin{equation}
 \begin{aligned}
    \frac{\ud }{\ud t}(E_\eps(\rhoe) - E_\eps(\mu_\eps)) = -D_\eps(\rhoe) \leq -\frac{2\lambda}{\alpha_B} \tilde{E}_\eps(\rhoe) \lesssim -\frac{2\lambda}{\alpha_B}(E_\eps(\rhoe)-E_\eps(\mu_\eps) ).
\end{aligned}   
\end{equation}
This yields the exponential decay \eqref{E.Exp.Decay} of $E_\eps(\rhoe)$ to $E_\eps(\mu_\eps)$:
$$E_\eps(\rhoe)-E_\eps(\mu_\eps) \lesssim e^{-\frac{2\lambda}{\alpha_B}t}.$$

For the $L^1$-decay \eqref{Rho.Exp.Decay} of $\rhoe$, we can start from \eqref{Erelation}
to arrive at
\begin{align*}
    \int \rhoe\log\frac{\rhoe}{\mu_\eps}\ud x = E_\eps(\rhoe)-E_\eps(\mu_\eps) + O(\gamma_W\|\rhoe - \mu_\eps\|^2_{L^1}).
\end{align*}
(More precisely, the following holds for all $\rho\in\sP(\mathbb R^n)$ -- see \cite[Lem 6.2(6.3)]{Tamura87},
\[
\int \rho\log\frac{\rho}{\mu_\eps}\ud x 
= E_\eps(\rho) - E_\eps(\mu_\eps) - \frac12\int W*(\rho-\mu_\eps)(\rho-\mu_\eps)\ud x.)
\]
Then Pinsker's inequality \eqref{pinsker1} gives
\begin{equation}
\|\rhoe-\mu_\eps\|^2_{L^1}
\lesssim
\int \rhoe\log\frac{\rhoe}{\mu_\eps}\ud x 
= E_\eps(\rhoe)-E_\eps(\mu_\eps) + O(\gamma_W\|\rhoe - \mu_\eps\|^2_{L^1}).
\end{equation}
Hence for $\gamma_W$ small enough, we conclude that,
\begin{equation}
    \|\rhoe - \mu_\eps\|^2_{L^1} \lesssim E(\rhoe)-E(\mu_\eps) \lesssim e^{-\frac{2\lambda}{\alpha_B}t}.
\end{equation}

The proof of \eqref{conv.limit} follows from exactly the same with steps.

We then have completed the proof of Theorem \ref{thm2}.

\appendix
\section{Proof (Outline) of \eqref{TamureLem6.7} \cite[Lemma 6.7]{Tamura87}}\label{PfTLem6.7}
We essentially follow 
but simplify somewhat \cite[Lemma 6.3--6.7]{Tamura87}, tailored to our setting.

\begin{itemize}
\item \cite[Lemma 6.3]{Tamura87}
By direct calculation \cite[Lemma 6.2(6.3)]{Tamura87}, for any $p,\,\,q\in\sP(\mathbb R^n)$, it holds that
\begin{multline*}
E_\eps(p) - E_\eps(q)
=H(p|e(q))
+ \frac12\int W*(p-q)(p-q)\ud x
+ \frac12\int W*(q-e(q))(q-e(q))\ud x.
\end{multline*}
Taking $q=\mu_\eps$ and using Pinsker's Inequality \eqref{pinsker1} give that 
\begin{equation}\label{T6.3}
E_\eps(p)-E_\eps(\mu_\eps)
\leq (1+\gamma_W)\big(H(p|e(p))+H(e(p)|\mu_\eps\big).
\end{equation}

\item \cite[Lemma 6.5]{Tamura87} states that there exist positive constants $\gamma_1$ and $\delta$ so that
\begin{equation}\label{T6.5}
H(p|\mu_\eps) \leq \gamma_1 H(p|e(p))\,\,\,\text{holds
for any $p\in\sP(\mathbb R^n)$ with $\|\frac{p}{\mu_\eps}-1\|_{L^\8}< \delta$.}
\end{equation}
The above essentially reflects the fact that $H(\cdot|\mu_\eps) \sim H(\cdot|e(p))$ 
when $p$ is appropriately close to $\mu_\eps$.
The proof of this step involves the linearization of the entropy functional $H(\cdot|\cdot)$ near $\mu_\eps$. More precisely, we have
\begin{eqnarray}
\left|
H(\mu_\eps+h|\mu_\eps)-\frac12\|h\|^2_{\mu_\eps^{-1},0}
\right| 
&\lesssim& \left\|\frac{h}{\mu_\eps}\right\|_{L^\8}\|h\|^2_{\mu_\eps^{-1},0},\label{lin.H.1}\\
\left|
H(\mu_\eps+h|e(\mu_\eps+h))-\frac12\|(I-K_{\mu_\eps})h\|^2_{\mu_\eps^{-1},0}
\right| 
&\lesssim& \left\|\frac{h}{\mu_\eps}\right\|_{L^\8}\|h\|^2_{\mu_\eps^{-1},0}\label{lin.H.2}
\end{eqnarray}
where 
\begin{eqnarray}
K_{\mu_\eps}h(x)&=&-\mu_\eps(x)\big[W*h(x) - \langle W*h,\mu_\eps\rangle_{L^2}\big],\\
L^2_{\mu_\eps^{-1},0}&=&
\left\{
g:\mathbb R^n\longrightarrow\mathbb R:\,\,\,
\int g\ud x=0,\,\,\,\int \frac{g^2}{\mu_\eps} \ud x< \8
\right\}.
\end{eqnarray}
Formally, we have $I-K_{\mu_\eps}=D^2E_\eps(\mu_\eps)$ on $L^2_{\mu_\eps,0}$ and 
$(I-K_{\mu_\eps})^{-1}$ exists on $L^2_{\mu_\eps,0}$ for small enough $\gamma_W$.

Now we estimate,
\begin{eqnarray*}
&&H(\mu_\eps+h|\mu_\eps)\\
&\lesssim_{\text{\eqref{lin.H.1}}} &
\|h\|^2_{\mu_\eps^{-1},0}+\left\|\frac{h}{\mu_\eps}\right\|_{L^\8}\|h\|^2_{\mu_\eps^{-1},0}\\
&\lesssim&
\|(I-K_{\mu_\eps})^{-1}\|^2\|(I-K_{\mu_\eps})h\|^2_{\mu_\eps^{-1},0}
+\left\|\frac{h}{\mu_\eps}\right\|_{L^\8}\|h\|^2_{\mu_\eps^{-1},0}\\
&\lesssim_{\text{\eqref{lin.H.2}}}&
\|(I-K_{\mu_\eps})^{-1}\|^2\left(
H(\mu_\eps+h|e(\mu_\eps+h))+\left\|\frac{h}{\mu_\eps}\right\|_{L^\8}\|h\|^2_{\mu_\eps^{-1},0}
\right)
+\left\|\frac{h}{\mu_\eps}\right\|_{L^\8}\|h\|^2_{\mu_\eps^{-1},0}\\
&\lesssim&
\|(I-K_{\mu_\eps})^{-1}\|^2
H(\mu_\eps+h|e(\mu_\eps+h))+
(1+\|(I-K_{\mu_\eps})^{-1}\|^2)\left\|\frac{h}{\mu_\eps}\right\|_{L^\8}\|h\|^2_{\mu_\eps^{-1},0}
\\
&\lesssim_{\text{\eqref{lin.H.1}}}&
\|(I-K_{\mu_\eps})^{-1}\|^2
H(\mu_\eps+h|e(\mu_\eps+h))+
(1+\|(I-K_{\mu_\eps})^{-1}\|^2)\left\|\frac{h}{\mu_\eps}\right\|_{L^\8}
\frac{H(\mu_\eps+h|\mu_\eps)}{1-C\left\|\frac{h}{\mu_\eps}\right\|_{L^\8}}
\end{eqnarray*}
for some constant $C$.
Hence for $\left\|\frac{h}{\mu_\eps}\right\|_{L^\8}\ll1$, we have
\[
H(\mu_\eps+h|\mu_\eps)\lesssim H(\mu_\eps+h|e(\mu_\eps+h))
\]
giving \eqref{T6.5}.

\item \cite[Lemma 6.6]{Tamura87} Fairly direct calculation and estimation gives that there is a positive constant $\gamma_2$ such that for any $p\in\sP(\mathbb R^n)$, we have
\begin{equation}\label{T6.6}
H(e(p)|e(e(p)))\leq\gamma_2 H(p|e(p)).
\end{equation}
\end{itemize}

With the above, the result follows from the following chain of estimates:
\begin{align*}
E_\eps(p)-E_\eps(\mu_\eps)
&\leq_{\text{\eqref{T6.3}}} (1+\gamma_W)\big(H(p|e(p))+H(e(p)|\mu_\eps)\big)\\
&\leq_{\text{\eqref{T6.5}}} (1+\gamma_W)\big(H(p|e(p))+\gamma_1 H(e(p)|e(e(p))\big)\\
&\leq_{\text{\eqref{T6.6}}} (1+\gamma_W)(1+\gamma_1\gamma_2)H(p|e(p))
\end{align*}
which is exactly \eqref{TamureLem6.7}.
Note that when applying \eqref{T6.5} in the above, we are in fact relying on
$\|\frac{e(p)}{\mu_\eps}-1\|_{L^\8}\ll1$ which coincides with the 
hypothesis in \eqref{TamureLem6.7}. That this can be realized is already explained in
\eqref{OpenNbhd1}-\eqref{OpenNbhd2}.

\section{Asymptotic analysis for the $\eps$-gradient flow}\label{asym.exp}
In this section, we use the method of asymptotic expansion to analyze
the convergence of the $\eps$-Fokker-Planck equation \eqref{epsFP} 
(or \eqref{epsGF}) to the limiting homogenized one \eqref{0GF}.

We recall the governing equation for $\rhoe$ and its version in terms of $f^\eps$ here for convenience:
\begin{equation}
\pt_t \rhoe = \nabla \cdot \bbs{\pi_\eps B_\eps^{-1} \nabla \frac{\rhoe}{\pi_\eps} + \rhoe B_\eps^{-1}\nabla W*\rhoe   },
\end{equation}
\begin{align}
\pt_t f^\eps_t = \frac{1}{\pi_\eps} \nabla \cdot \bbs{   \pi_\eps B_\eps^{-1}  \nabla f^{\eps}_t + f^\eps_t   \pi_\eps B_\eps^{-1}    \nabla W*(f^\eps_t\pie)  }.
\end{align}

Recall the assumptions \eqref{Bform} and \eqref{pie}
for $B_\eps$ and $\pie$ in Section \ref{main} and the definition of
fast variable $\displaystyle y= \frac{x}{\eps}$. 
Introducing,
\begin{eqnarray}\label{A.def}
A(x,y) &=& \pi(x,y)B^{-1}(y),\\
I_\eps(\varphi, x) 
&=& \nabla W\ast(\varphi\pi(x,\frac{x}{\eps}))
= \int \nabla_x W(x,z)\varphi(z)\pi(z,\frac{z}{\eps})\,dz,
\end{eqnarray}
then \eqref{backward} reads
\begin{equation}\label{tmf}
\pt_t f^\eps =\frac{1}{\pie} \nabla \cdot \Big(A(x,\frac{x}{\eps}) \nabla f^\eps + f^\eps A(x,\frac{x}{\eps})I_\eps(f^\eps, x)\Big).
\end{equation}
Note that for all $m\geq 0$
\[
\Big|D^m_xI_\eps(\varphi, x)\Big|
\lesssim
\int \left|D^m_x\nabla_x W(x,z)\right|\varphi(z)\pi(z,\frac{z}{\eps})\,dz
\leq O(1)
\]
so that to leading order $I_\eps(\varphi, x)$ contains no $y$ components. Hence without loss of 
generality, we can replace $I_\eps(\varphi, x)$ by the following limit
\[
\lim_{\eps\to0}I_\eps(\varphi, x) = \int\nabla_x W(x,z)\varphi(z)\langle\pi\rangle(z)\ud z
=: \overline{I}(\varphi,x).
\]
We also introduce:
\[
\overline{I}_i(\varphi,x):=\int\partial_{x_i} W(x,z)\varphi(z)\langle\pi\rangle(z)\ud z.
\]
 
Now consider the ansatz
\begin{equation}
f^\eps(x,t) = f_0(x,\frac{x}{\eps},t) + \eps f_1(x,\frac{x}{\eps},t) + \eps^2 f_2(x,\frac{x}{\eps},t)+O(\eps^3),
\end{equation}
where $f_i=f_i(x,y)$, $i=0,1,2$, are $1$-periodic in $y$.
Substituting it into \eqref{tmf}, we have
\begin{eqnarray}
&&\pt_t 
\big(f_0+\eps f_1 + \eps^2 f_2 + \cdots\big)\nonumber\\
&=& \frac{1}{\pi(x,y)}\left(\nabla_x + \frac{1}{\eps} \nabla_y\right)\cdot\\
&& \Big[A(x,y)\left(\nabla_x+\frac{1}{\eps}\nabla y\right)
\big(f_0+\eps f_1 + \eps^2 f_2 + \cdots\big)\nonumber\\
&&
+\big(f_0+\eps f_1 + \eps^2 f_2 + \cdots\big)A(x,y)I(f_0+\eps f_1 + \eps^2 f_2 + \cdots, x)
\Big].\label{feps}
\end{eqnarray} 
Terms of different orders are analyzed as follows.

\begin{description}
\item[(I) $\frac{1}{\eps^2}$-terms] They satisfy,
\begin{align*}
\nabla_y \cdot \Big(A(x,y) \nabla_y f_0(x,y,t)\Big) =0.
\end{align*}
Multiply the above by $f_0(x,y,t)$ and then integrate over $y$ gives
$\displaystyle \int |\nabla_y f_0(x,y,t)|^2\ud y = 0$ 
which implies $f_0(x,y,t)=f_0(x,t).$

\item[(II) $\frac{1}{\eps}$-terms] They satisfy,
\begin{eqnarray}
\nabla_y \cdot \Big(A(x,y) \nabla_y f_1\Big)
&=& -\nabla_y\cdot\Big[A(x,y)\nabla_x f_0(x)\Big] - \nabla_y\cdot\Big[f_0(x)A(x,y)\bar{I}(f_0,x)\Big]\\
&=& -\nabla_y\cdot\Big[A(x,y)\nabla_x f_0(x)\Big] - f_0(x)\nabla_y\cdot\Big[A(x,y)\bar{I}(f_0,x)\Big].
\end{eqnarray} 
For $i=1,2,\ldots d$, let $w_i(y)$ be the solution to the cell problem
\begin{equation}\label{cell1}
\nabla_y \cdot \Big(A(x,y)\nabla_y w_i(x,y)\Big) +  \nabla_y \cdot \Big(A(x,y) \vec{e}_i\Big)=0,
\end{equation}
where $\vec{e}_i$ is the unit vector in $i$-coordinate.
The above equation is solvable for each $i$ due to the compatibility condition
$$
\int \nabla_y \cdot \Big(A(x,y) \vec{e}_i\Big) \ud y = 0.
$$
Then we have
\begin{equation}
f_1(x,y) = \sum_j \big(\partial_{x_j}f_0(x)\big)w_j(x,y) 
+ f_0(x)\sum_j w_j(x,y)\bar{I}_j(f_0,x)
\end{equation} 
where $w = (w_1, w_2, \ldots, w_d)^T$.

\item[(III) $O(1)$-terms] Collecting the $O(1)$-terms in \eqref{feps} and
integrating with respect to $y$ lead to
\begin{multline*}
\partial_t f_0(x,t) =
\frac{1}{\pi(x,y)}\Big\{
\nabla_y\cdot\big[A(x,y)\nabla_y f_2\big]
+\nabla_x\cdot\big[A(x,y)\nabla_y f_1\big]
+\nabla_y\cdot\big[A(x,y)\nabla_x f_1\big]
\\
+\nabla_x\cdot\big[A(x,y)\nabla_x f_0\big]
+\nabla_x\cdot\big[f_0(x) A(x,y)I_\eps(x)\big]
+\nabla_y\cdot\big[f_1(x,y)A(x,y)I_\eps(x)\big]
\Big\}.
\end{multline*}
Upon integrating in $y$, we get
\begin{align*}
\pt_t f_0(x,t)  
= \frac{1}{\langle{\pi}\rangle}\nabla_x\cdot\Big\{\bar{A}(x)\nabla f_0 + f_0\bar{A}(x)\bar{I}(f_0,x)\Big\}
\end{align*}
where
\begin{equation}\label{eff.A}
\bar{A}_{ij}(x) = \int_{y\in\mathbb{T}^n}
\big(A_{ik}(x,y)\partial_{y_k}w_j(x,y) + A_{ij}(x,y)\big)\ud y.
\end{equation}
\end{description}

With the above, we can now transform the dynamics of $f_0$ as follows:
\begin{eqnarray*}
\partial_t f_0 = \frac{1}{\langle{\pi}\rangle}\nabla\cdot\left\{
f_0\left(\bar{A}\frac{\nabla f_0}{f_0} + \bar{A}\bar{I}(f_0)\right)
\right\}
\end{eqnarray*}
In terms of $\bar{\rho}:=f_0 \lbar{\pi}$, the above then becomes
\begin{eqnarray*}
\partial_t \bar{\rho} = \nabla\cdot\left\{
\bar{\rho}\left(\frac{\bar{A}}{\lbar{\pi}}\nabla\log{\frac{\bar{\rho}}{\lbar{\pi}}} + \frac{\bar{A}}{\lbar{\pi}}\bar{I}\left(\frac{\bar{\rho}}{\lbar{\pi}}\right)\right)
\right\}
\end{eqnarray*}
where now
\[
\bar{I}\left(\frac{\bar{\rho}}{\lbar{\pi}},x\right)
=\int\nabla_x W(x,z)\bar{\rho}(z)\ud z
=\nabla_x W\ast\bar{\rho}
\]
so that
\begin{equation}\label{eff.rho.eqn}
\partial_t \bar{\rho} = \nabla\cdot\left\{
\bar{\rho}\frac{\bar{A}}{\lbar{\pi}}\nabla\left(\log{\frac{\bar{\rho}}{\lbar{\pi}}} + W\ast\bar{\rho}\right)
\right\}.
\end{equation}
  Upon introducing $\displaystyle\lbar{B}^{-1}=\frac{\bar{A}}{\lbar{\pi}}$, the above becomes
\begin{equation}\label{eff.rho.eqn.2}
\partial_t \bar{\rho} = \nabla\cdot\left\{
\bar{\rho}\lbar{B}^{-1}\nabla\left(\log{\frac{\bar{\rho}}{\lbar{\pi}}} + W\ast\bar{\rho}\right)
\right\}.
\end{equation}
leading to the form \eqref{0GF}.

Note that in general, for $\pi=\pi(x,y)$, the formula for $\bar{A}$ involves  interaction between $\pi$ and $B$. On the other hand, for $\pi=\pi(x)$ with no dependence in the fast
variables, or for the simpler uniform convergence
case $\pie=\pie^{\text{II}}$ in \eqref{pie} which converges uniformly to 
$\pi_0$, such an interaction disappears. More precisely, for $A(x,y) = \pi(x)B^{-1}(y)$, 
the cell problem \eqref{cell1} now becomes
\begin{equation}
\nabla_y \cdot \Big(B^{-1}(y)\nabla_y w_i(y)\Big) +  \nabla_y \cdot \Big(B^{-1}(y) \vec{e}_i\Big)=0
\end{equation}
which does not involve $\pi$ at all. Then the formula \eqref{eff.A} for the effective coefficient becomes
\begin{equation}
\bar{A}_{ij}(x) = \pi(x)\tilde{A}_{ij},\quad\text{where}\quad
\tilde{A}_{ij}=\int_{y\in\mathbb{T}^n}
\Big((B^{-1})_{ik}(x,y)\partial_{y_k}w_j(x,y) + (B^{-1})_{ij}(x,y)\Big)\ud y.
\end{equation}
Then equation \eqref{eff.rho.eqn} becomes
\begin{equation}\label{eff.rho.eqn0}
\partial_t \bar{\rho} = \nabla\cdot\left\{
\bar{\rho}\tilde{A}\nabla\left(\log{\frac{\bar{\rho}}{\lbar{\pi}}} +  W\ast\bar{\rho}\right)
\right\}.
\end{equation}

\bigskip

\noindent
{\bf Acknowledgements.}
The research of YG is partially supported by NSF CAREER Award DMS-2440651.

\medskip

\noindent{\bf Data Availability.} No data have been used or created for this work.

\medskip

\noindent
{\bf Conflict of interest.} The authors declare no conflict of interest.

\bibliographystyle{alpha}
\bibliography{homo_bib}

\end{document}